\pgfplotsset{compat=1.9}
\DeclareMathOperator*{\subjto}{subj. to}
\DeclareMathOperator{\tr}{tr}
\DeclareMathOperator{\epigraph}{epi}
\DeclareMathOperator{\hypograph}{hypo}
\DeclareMathOperator{\closure}{cl}
\DeclareMathOperator{\domain}{dom}
\DeclareMathOperator{\diag}{diag}
\DeclareMathOperator{\image}{im}
\DeclareMathOperator{\im}{Im}
\DeclarePairedDelimiterX{\divx}[2]{(}{)}{#1\mspace{1.5mu}\delimsize\|\mspace{1.5mu}#2}
\DeclarePairedDelimiterX{\divy}[2]{(}{)}{#1\mspace{1mu}\delimsize|\mspace{1mu}#2}
\DeclarePairedDelimiterX{\divz}[3]{(}{)}{#1; #2\mspace{1.5mu}\delimsize\|\mspace{1.5mu}#3}
\DeclarePairedDelimiterX{\inp}[2]{\langle}{\rangle}{#1, #2}
\DeclarePairedDelimiterX{\norm}[1]{\lVert}{\rVert}{#1}
\DeclarePairedDelimiterX{\abs}[1]{\lvert}{\rvert}{#1}
\DeclarePairedDelimiterX{\bk}[2]{\langle}{\rangle}{#1 \delimsize\vert #2}
\newcommand*\wc{{\mkern 2mu\cdot\mkern 2mu}}
\newcommand*\conj[1]{\bar{#1}}
\NewDocumentCommand{\grad}{e{_^}}{%
  \mathop{}\!
  \nabla
  \IfValueT{#1}{_{\!#1}}
  \IfValueT{#2}{^{#2}}
}
\NewCommandCopy{\ordinaryforall}{\forall}
\NewCommandCopy{\ordinaryexists}{\exists}
\RenewDocumentCommand{\forall}{}{\mathop{{}\ordinaryforall}}
\RenewDocumentCommand{\exists}{}{\mathop{{}\ordinaryexists}}
\definecolor{codegreen}{rgb}{0,0.6,0}
\definecolor{codegray}{rgb}{0.5,0.5,0.5}
\definecolor{outputgray}{rgb}{0.35,0.35,0.35}
\definecolor{codepurple}{rgb}{0.58,0,0.82}
\definecolor{backcolour}{rgb}{0.95,0.95,0.92}
\lstdefinestyle{mystyle}{
    backgroundcolor=\color{backcolour},   
    commentstyle=\color{codegreen},
    keywordstyle=\color{magenta},
    numberstyle=\tiny\color{codegray},
    stringstyle=\color{codepurple},
    basicstyle=\ttfamily\footnotesize,
    breakatwhitespace=false,         
    breaklines=true,                 
    captionpos=t,                    
    keepspaces=true,                 
    numbers=left,                    
    numbersep=5pt,                  
    showspaces=false,                
    showstringspaces=false,
    showtabs=false,                  
    tabsize=2,
    frame=lrtb,
    xleftmargin=0.75em,
    framexleftmargin=1.5em,
    framesep=0pt,
    framexleftmargin=0.25em
}
\newcommand{\bftab}{\fontseries{b}\selectfont}
\newtheorem{thm}{Theorem}[section]
\newtheorem{lem}[thm]{Lemma}
\newtheorem{prop}[thm]{Proposition}
\newtheorem{cor}[thm]{Corollary}
\newtheorem{defn}[thm]{Definition}
\newtheorem{rem}[thm]{Remark}
\newtheorem{conjecture}[thm]{Conjecture}
\newcommand{\footremember}[2]{%
    \footnote{#2}
    \newcounter{#1}
    \setcounter{#1}{\value{footnote}}%
}
\newcommand{\footrecall}[1]{%
    \footnotemark[\value{#1}]%
} 
\begin{document}

\title{Operator convexity along lines, self-concordance, and sandwiched R\'enyi entropies}



\author{%
    Kerry He\footremember{monash}{Department of Electrical and Computer Systems Engineering, Monash University, Clayton VIC 3800, Australia. \url{{kerry.he1, james.saunderson}@monash.edu}} \and James Saunderson\footrecall{monash} \and Hamza Fawzi\footremember{cambridge}{Department of Applied Mathematics and Theoretical Physics, University of Cambridge, Cambridge CB3 0WA, United Kingdom. \url{h.fawzi@damtp.cam.ac.uk}}
}
\date{}

\newcommand{\HF}[1]{{\color{magenta}[HF: #1]}}

\maketitle

\begin{abstract}
    Barrier methods play a central role in the theory and practice of convex optimization. One of the most general and successful analyses of barrier methods for convex optimization, due to Nesterov and Nemirovskii, relies on the notion of self-concordance. While an extremely powerful concept, proving self-concordance of barrier functions can be very difficult. In this paper we give a simple way to verify that the natural logarithmic barrier of a convex nonlinear constraint is self-concordant via the theory of operator convex functions. Namely, we show that if a convex function is operator convex along any one-dimensional restriction, then the natural logarithmic barrier of its epigraph is self-concordant. We apply this technique to construct self-concordant barriers for the epigraphs of functions arising in quantum information theory. Notably, we apply this to the sandwiched R\'enyi entropy function, for which no self-concordant barrier was known before. Additionally, we utilize our sufficient condition to provide simplified proofs for previously established self-concordance results for the noncommutative perspective of operator convex functions. An implementation of the convex cones considered in this paper is now available in our open source interior-point solver \href{https://github.com/kerry-he/qics}{QICS}.
\end{abstract}

\section{Introduction}\label{sec:intro}


Let $f:\mathbb{R}^n\rightarrow\mathbb{R}$ be a convex function. Convex optimization problems involving this function can often be expressed in terms of its epigraph, i.e.,
\begin{equation*}
    \epigraph f \coloneqq \{ (t,x) \in \mathbb{R} \times \domain f : t \geq f(x) \}.
\end{equation*}
If we have a self-concordant barrier for this set, then this allows us to incorporate the function $f$ into the broader Nesterov-Nemirovskii framework for interior-point methods~\cite{nesterov1994interior}.
However, constructing an efficiently computable self-concordant barrier for a set is not always straightforward. 
If a function $G$ is a self-concordant barrier for the domain of $f$, then it would be natural to hope that a self-concordant barrier for the epigraph of $f$ is
\begin{equation}\label{eqn:candidate-barrier}
    (t,x) \mapsto -\log(t - f(x)) + G(x).
\end{equation}
However, this is not true in general, e.g., when $f(x)=e^x$ (see~\cite[Proposition 5.3.3]{nesterov1994interior} for an actual self-concordant barrier for the epigraph of the exponential).
Moreover, even in cases where~\eqref{eqn:candidate-barrier} is self-concordant, proving this can be difficult. 
For example, functions arising in quantum information theory, such as the sandwiched R\'enyi entropy of the title, are often spectral functions of multiple Hermitian matrices. Although it is possible to obtain explicit expressions for the derivatives of these functions (see, e.g., Section~\ref{sec:derivatives}), they typically depend, in a complicated way, on the eigendecompositions of the matrices involved. This makes it challenging
to prove self-concordance of~\eqref{eqn:candidate-barrier}, which requires uniformly bounding the third derivative in terms of the second derivative.




In this paper, instead of proving self-concordance by directly working with the derivatives of the barrier function, we instead relate self-concordance to operator convexity, i.e., univariate functions that are convex with respect to the Loewner order when extended to spectral functions of Hermitian matrices
(see Section~\ref{subsec:operator} for a precise definition). In particular, we show that if a function, when restricted to any line within its domain, is operator convex, then the natural logarithmic barrier~\eqref{eqn:candidate-barrier} is self-concordant. This allows us to prove self-concordance by instead using tools from the rich literature of operator convex functions. 
We summarize this idea in the following theorem, which is a simplification of the main technical result of our paper which we present later in Theorem~\ref{thm:compatibility-operator-concave}.

\begin{thm}\label{thm:simplified}
    Let $\mathbb{V}$ be a finite-dimensional real vector space, and $f:\domain f\rightarrow\mathbb{R}$ be a $C^3$ function with open domain $\domain f \subset \mathbb{V}$. Suppose that for all $x\in\domain f$ and $h\in\mathbb{V}$ such that $x\pm h \in \closure\domain f$, the function
    \begin{equation*}
        t \mapsto f(x + th),
    \end{equation*}
    is operator convex on $(-1, 1)$. If $G$ is a $\nu$-self-concordant barrier for $\closure \domain f$, then 
    \begin{equation*}
        (t,x) \mapsto -\log(t - f(x)) + G(x),
    \end{equation*}
    defined on $\mathbb{R}\times\domain f$ is a $(1+\nu)$-self-concordant barrier for $\closure\epigraph f$.
\end{thm}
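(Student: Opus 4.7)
The strategy is classical: self-concordance is checked along one-dimensional affine slices, the barrier-parameter bound splits additively between $-\log(t - f(x))$ and $G(x)$, and the only nontrivial point is a third-derivative ``compatibility'' inequality between $f$ and $G$, which will be supplied by the operator-convexity hypothesis. Fix a base point $(t_0, x_0)$ with $t_0 > f(x_0)$ and a direction $(\tau, h)$, and write the restriction to the line as $F_{(\tau,h)}(s) = -\log u(s) + g(s)$, where $u(s) = (t_0 + s\tau) - \phi(s)$, $\phi(s) = f(x_0 + sh)$, and $g(s) = G(x_0 + sh)$.

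The barrier-parameter inequality $|DF[v]|^2 \le (1+\nu)\, D^2 F[v,v]$ is immediate: $-\log$ is a $1$-self-concordant barrier for $(0,\infty)$ in the variable $u$, $G$ is $\nu$-self-concordant in $x$, and the first- and second-order Taylor contributions of the two pieces add. For self-concordance itself, expanding $D^3 F_{(\tau,h)}(0)$ and $D^2 F_{(\tau,h)}(0)$ in terms of $\phi', \phi'', \phi'''$ and $g'', g'''$, and invoking self-concordance of $G$ to absorb the $g'''$ terms, the required cubic bound reduces (after routine algebra on the derivatives of $-\log u$) to the Nesterov--Nemirovskii-style compatibility inequality
\begin{equation*}
    |\phi'''(0)| \;\le\; 3\, \phi''(0)\, \sqrt{g''(0)},
\end{equation*}
which must hold at every $(x_0, h)$.

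To establish this compatibility I would rescale $h$: let $\alpha > 0$ be the largest value for which $x_0 \pm \alpha h \in \closure \domain f$, and set $\psi(t) = f(x_0 + t \alpha h) = \phi(\alpha t)$; by hypothesis, $\psi$ is operator convex on $(-1, 1)$. The key one-dimensional lemma I would need is that every $C^3$ operator convex function $\psi$ on $(-1, 1)$ satisfies
\begin{equation*}
    |\psi'''(0)| \;\le\; 3\, \psi''(0),
\end{equation*}
which should follow from a Kraus-style integral representation of $\psi$ as a positive combination of kernels $s \mapsto s^2/(\lambda - s)$ with $|\lambda| \ge 1$, combined with a pointwise comparison of the integrands representing $\psi''(0)$ and $\psi'''(0)$; the extremal case $\lambda = \pm 1$ (that is, $\psi(s) = s^2/(1-s)$) saturates the constant $3$. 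Converting back via $\psi''(0) = \alpha^2 \phi''(0)$ and $\psi'''(0) = \alpha^3 \phi'''(0)$ yields $|\phi'''(0)| \le 3 \alpha^{-1} \phi''(0)$, and the Minkowski factor $\alpha^{-1}$ is dominated by $\sqrt{g''(0)} = \|h\|_{G, x_0}$ by the standard containment of the Dikin ellipsoid of $G$ at $x_0$ inside $\closure \domain G - x_0$. This delivers exactly the compatibility bound above.

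The main obstacle is the sharp 1D operator-convex estimate $|\psi'''(0)| \le 3\, \psi''(0)$: the constant $3$ must be attained exactly in order to combine with the factor $2$ in the self-concordance axiom and the unit factor contributed by $-\log$. Once this lemma is in hand, the remainder of the argument reduces to bookkeeping with the self-concordance axioms and affine invariance.
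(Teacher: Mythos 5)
Your proposal follows essentially the same approach as the paper: the key step is the one-dimensional estimate $\abs{\psi'''(0)} \le 3\psi''(0)$ for operator convex $\psi$ on $(-1,1)$ — obtained from the integral representation (the paper's Lemma~\ref{lem:operator-convex}) via the pointwise integrand bound $\abs{\xi_s'''(0)} = 6\abs{s} \le 6 = 3\,\xi_s''(0)$ — which is exactly the scalar case of the paper's Theorem~\ref{thm:compatibility-operator-concave}. Where you verify the Nesterov--Nemirovskii compatibility inequality directly against $\sqrt{\mathsf{D}^2 G(x)[h,h]}$ via the Dikin-ellipsoid rescaling, the paper instead verifies the equivalent domain form $\mathsf{D}^3 f[h,h,h] \preceq -3\,\mathsf{D}^2 f[h,h]$ for $x\pm h\in\closure\domain f$ and cites Nesterov's Theorem 5.4.4 (Lemma~\ref{lem:compatibility-to-barrier}) to produce the barrier, but the underlying content is identical.
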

\begin{proof}
    See Section~\ref{sec:proof-main}.
\end{proof}

We prove this theorem by showing that if a function is operator convex along lines, then it satisfies another previously known sufficient condition for~\eqref{eqn:candidate-barrier} to be a self-concordant barrier, i.e., it is compatible with respect to its domain, in the sense of Nesterov and Nemirovskii.
We provide a more detailed background on this concept in Section~\ref{sec:self-concordant}. In previous works~\cite{faybusovich2017matrix,coey2023conic,fawzi2023optimal}, it was shown that operator convex functions and their noncommutative perspectives were compatible with their domains. Our key assumption of being operator convex along lines is a weaker condition which not only generalizes the results from these works, but also allows us to prove compatibility of more complicated expressions.




Specifically, we focus on constructing self-concordant barriers for epigraphs (and hypographs) of functions called sandwiched R\'enyi (quasi-relative) entropies. These functions have remarkable convexity and concavity properties, and play a prominent role in quantum information theory. However, until now they have not been amenable to optimization via off-the-shelf interior point methods. We discuss these functions in more detail in the following section.

\subsection{Sandwiched R\'enyi entropies}\label{sec:renyi}

Consider the following trace function, sometimes referred to as the sandwiched $\alpha$-quasi-relative entropy,
\begin{equation}\label{eqn:trace-function}
    \Psi_\alpha(X, Y) \coloneqq \tr\!\left[ \left(Y^\frac{1-\alpha}{2\alpha} X Y^\frac{1-\alpha}{2\alpha} \right)^\alpha \right],
\end{equation}
defined on $\mathbb{H}^n_{++}\times\mathbb{H}^n_{++}$, i.e., pairs of positive definite $n\times n$ Hermitian matrices. This function is used to define the sandwiched $\alpha$-R\'enyi entropy~\cite{muller2013quantum,wilde2014strong}
\begin{equation*}
    D_\alpha\divx{X}{Y} \coloneqq \frac{1}{\alpha - 1} \log(\Psi_\alpha(X, Y)),
\end{equation*}
where $\alpha\in(0, 1)\cup(1, \infty)$. These sandwiched R\'enyi entropies are used to quantify how dissimilar two quantum states are, and arise in applications such as quantum hypothesis testing~\cite{mosonyi2015quantum} and quantum cryptography~\cite{dupuis2023privacy}. 


\paragraph{Convexity properties}

The sandwiched quasi-relative entropy $\Psi_\alpha$ is jointly concave for $\alpha\in[\frac{1}{2}, 1]$, and is jointly convex for $\alpha\in[1, \infty)$, see, e.g.,~\cite{frank2013monotonicity}.
Various techniques have been developed to prove such convexity and concavity results for $\Psi_\alpha$ and related trace functions. We comment on three notable techniques. First, in the works which originally introduced the sandwiched R\'enyi entropy~\cite{muller2013quantum,wilde2014strong}, convexity of $\Psi_\alpha$ for $\alpha\in[1,2]$ was shown by expressing the function as an appropriate composition between noncommutative perspectives of operator convex functions and positive linear maps. Second, a complex analysis technique based on Epstein's method~\cite{epstein1973remarks} was used by Hiai~\cite{hiai2001concavity,hiai2013concavity,hiai2016concavity} to prove concavity of a more general class of trace functions. Notably, these results are a generalization of concavity of $\Psi_\alpha$ for $\alpha\in[\frac{1}{2}, 1]$. Third, a variational technique was used in~\cite{frank2013monotonicity} (see, also,~\cite{zhang2020wigner}), to prove concavity and convexity of $\Psi_\alpha$ for the full range $\alpha\in[\frac{1}{2}, \infty)$. 

Given these results, joint convexity of the sandwiched R\'enyi entropy $D_\alpha$ for $\alpha\in[\frac{1}{2}, 1)$ follows from a simple composition argument, see, e.g.,~\cite[Section 3.2.4]{boyd2004convex}. It is also easy to show that $D_\alpha$ is neither concave nor convex for $\alpha\in(1, \infty)$ by noticing that, in the scalar case, $D_\alpha\divx{x}{y}=\alpha/(\alpha-1)\log(x)-\log(y)$ for $x,y>0$, i.e., $D_\alpha$ is concave in $x$ and convex in $y$. Instead, if we wish to minimize the sandwiched R\'enyi entropy over any set $\mathcal{F}\subseteq\domain\Psi_\alpha$, due to monotonicity of $x\mapsto\log(x)$ we recognize that
\begin{equation}\label{eqn:reformulate}
    \min_{(X,Y)\in\mathcal{F}} D_\alpha\divx{X}{Y} = \begin{cases}
        \displaystyle \frac{1}{\alpha-1} \log \biggl(\max_{(X,Y)\in\mathcal{F}} \Psi_\alpha(X, Y) \biggr), \quad &\text{if }\alpha\in[1/2, 1)\\[10pt]
        \displaystyle \frac{1}{\alpha-1} \log \biggl(\min_{(X,Y)\in\mathcal{F}} \Psi_\alpha(X, Y) \biggr), \quad &\text{if }\alpha\in(1, \infty).
    \end{cases}
\end{equation}
Notably, the optimization problems in the right-hand expressions are both convex, and therefore it suffices to develop efficient optimization techniques to minimize or maximize $\Psi_\alpha$ for appropriate corresponding ranges of $\alpha$.

\paragraph{Optimizing R\'enyi entropies}

Currently, there is a lack of efficient optimization techniques available to minimize the sandwiched R\'enyi entropy. A first-order method known as entropic mirror descent was proposed in~\cite{you2022minimizing} to minimize these functions. However the algorithm is not guaranteed to converge to the optimal solution, only to a neighborhood around it. 

For some choices of $\alpha$, there are relatively well-known techniques to minimize the sandwiched R\'enyi entropy. When $\alpha=\frac{1}{2}$, the sandwiched quasi-relative entropy $\Psi_{1/2}$ is equal to the square root of the fidelity function $F(X,Y)=\norm{\sqrt{X}\sqrt{Y}}^2_1$ (where $\norm{\wc}_1$ denotes the trace norm), which has a well-known semidefinite programming representation~\cite{watrous2012simpler} given by
\begin{equation*}
    \min_{Z\in\mathbb{C}^{n\times n}} \quad \frac{1}{2} \tr[Z + Z^*] \quad \subjto \quad \begin{bmatrix}
        X & Z \\ Z^* & Y
    \end{bmatrix} \succeq0.
\end{equation*}

When $\alpha\rightarrow1$, the (normalized) sandwiched R\'enyi entropy converges to the (normalized) quantum relative entropy, i.e.,
\begin{equation*}
    \lim_{\alpha\rightarrow1} \bm{D}_\alpha\divz{\tr[X]}{X}{Y} = D_1\divx{X}{Y},
\end{equation*}
where $\bm{D}_\alpha$ is the perspective of the sandwiched R\'enyi entropy (see~\eqref{eqn:perspective-of-entropy} for a precise definition), and 
\begin{equation*}
    D_1\divx{X}{Y} = \tr[X\log(X)] - \tr[X\log(Y)],
\end{equation*}
is the (Umegaki) quantum relative entropy, which is known to be jointly convex~\cite{effros2009matrix}. Similarly, a closely related function to the sandwiched R\'enyi entropy is the R\'enyi entropy~\cite{petz1986quasi}, which is defined as $\hat{D}_\alpha\divx{X}{Y}=\log(\hat\Psi_\alpha(X,Y))/(\alpha-1)$ where
\begin{equation*}
    \hat{\Psi}_\alpha(X, Y) = \tr[X^\alpha Y^{1-\alpha}],
\end{equation*}
is sometimes referred to as the $\alpha$-quasi-relative entropy. Note that $\Psi_\alpha$ and $\hat{\Psi}_\alpha$ agree when their matrix arguments commute. The quasi-relative entropy $\hat{\Psi}_\alpha$ is jointly concave for $\alpha\in[0, 1]$ and jointly convex for $\alpha\in[-1, 0]\cup [1, 2]$, results which directly follow from theorems of Lieb~\cite{lieb1973convex} and Ando~\cite{ando1979concavity}. For the quantum relative entropy $D_1$ and quasi-relative entropy $\hat{\Psi}_\alpha$, it was shown in~\cite{fawzi2023optimal} that natural barriers for the epigraphs or hypographs of these functions are self-concordant, and therefore optimization problems minimizing these functions could be efficiently solved using interior-point methods, see, e.g.,~\cite{he2024exploiting,he2024qics,karimi2023efficient,coey2023performance}. Alternatively, it was shown in~\cite{fawzi2019semidefinite} and~\cite{fawzi2017lieb} that these functions could be approximated using linear matrix inequalities, and could therefore be optimized using semidefinite programming software.




\paragraph{Main results}

Using Theorem~\ref{thm:compatibility-operator-concave} (which we recall is a generalization of Theorem~\ref{thm:simplified}), we show that the natural logarithmic barrier functions for the hypographs of $\Psi_\alpha$ for $\alpha\in[\frac{1}{2}, 1]$ and epigraphs of $\Psi_\alpha$ for $\alpha\in[1, 2]$ are self-concordant with optimal barrier parameter, as summarized below (see Section~\ref{sec:self-concordant} for terminology related to self-concordant barriers).

\begin{thm}\label{thm:renyi-barrier}
    Let $n$ be any positive integer.
    \begin{enumerate}[label=(\roman*), ref=\ref{thm:renyi-barrier}(\roman*)]
        \item \label{thm:renyi-barrier-i} If $\alpha\in[\frac{1}{2}, 1]$, then the function 
        \begin{equation*}
            (t, X, Y)\in\mathbb{R}\times\mathbb{H}^n_{++}\times\mathbb{H}^n_{++} \mapsto -\log(\Psi_\alpha(X, Y) - t) - \log\det(X) - \log\det(Y),
        \end{equation*}
        is a $(1+2n)$-logarithmically homogeneous self-concordant barrier for
        \begin{equation*}
            \closure \hypograph \Psi_\alpha = \closure \{ (t, X, Y)\in\mathbb{R}\times\mathbb{H}^n_{++}\times\mathbb{H}^n_{++} : t \leq \Psi_\alpha(X, Y)  \}. 
        \end{equation*}     
        \item \label{thm:renyi-barrier-ii} If $\alpha\in[1, 2]$, then the function 
        \begin{equation*}
            (t, X, Y)\in\mathbb{R}\times\mathbb{H}^n_{++}\times\mathbb{H}^n_{++} \mapsto -\log(t - \Psi_\alpha(X, Y)) - \log\det(X) - \log\det(Y),
        \end{equation*}
        is a $(1+2n)$-logarithmically homogeneous self-concordant barrier for
        \begin{equation*}
            \closure \epigraph \Psi_\alpha = \closure \{ (t, X, Y)\in\mathbb{R}\times\mathbb{H}^n_{++}\times\mathbb{H}^n_{++} : t \geq \Psi_\alpha(X, Y)  \}. 
        \end{equation*}
    \end{enumerate}
    Moreover, these barriers are optimal in the sense that any self-concordant barrier for $\closure \hypograph \Psi_\alpha$ when $\alpha\in[\frac{1}{2}, 1]$ and $\closure \epigraph \Psi_\alpha$ when $\alpha\in[1, 2]$ has parameter at least $1 + 2n$.
\end{thm}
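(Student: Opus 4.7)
The strategy is to apply Theorem~\ref{thm:compatibility-operator-concave}, the bidirectional generalization of Theorem~\ref{thm:simplified} alluded to in the introduction, with $G(X,Y)=-\log\det X-\log\det Y$ serving as the classical $2n$-self-concordant barrier for $\mathbb{H}^n_{++}\times\mathbb{H}^n_{++}$. Once the operator convexity/concavity along lines hypothesis is verified for $\Psi_\alpha$, this theorem immediately produces a $(1+2n)$-self-concordant barrier of precisely the required form. Logarithmic homogeneity is transparent: since $\Psi_\alpha$ is positively homogeneous of degree one, both $\closure\epigraph\Psi_\alpha$ and $\closure\hypograph\Psi_\alpha$ are proper cones, and under the scaling $(t,X,Y)\mapsto(\lambda t,\lambda X,\lambda Y)$ the barrier changes by $-(1+2n)\log\lambda$.

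The core technical task is to show that for all $X,Y\succ 0$ and Hermitian $H_X,H_Y$ with $X\pm H_X,Y\pm H_Y\succeq 0$, the scalar function
\begin{equation*}
    g(t) \;=\; \Psi_\alpha(X+tH_X,\, Y+tH_Y)
\end{equation*}
is operator convex on $(-1,1)$ for $\alpha\in[1,2]$ and operator concave on $(-1,1)$ for $\alpha\in[\tfrac12,1]$. The endpoint $\alpha=1$ is immediate because $\Psi_1(X,Y)=\tr X$ is affine. For $\alpha\in(1,2]$, I would mirror the decomposition of~\cite{muller2013quantum,wilde2014strong} and express $\Psi_\alpha$ as a trace of a composition built from a noncommutative perspective of an operator convex function precomposed with a positive linear map, then invoke the result of~\cite{faybusovich2017matrix,coey2023conic,fawzi2023optimal}, reviewed in Section~\ref{sec:self-concordant}, that such perspectives are operator convex along lines; this property is preserved under precomposition with linear maps in the matrix arguments, and under summing, which is what the trace ultimately amounts to.

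For $\alpha\in[\tfrac12,1)$, the operator concavity-along-lines claim is the main obstacle. My plan is to use the Epstein--Hiai complex-analytic approach of~\cite{hiai2013concavity,hiai2016concavity}: show by analytic continuation that $g$ extends to a holomorphic function on the upper half-plane with nonnegative imaginary part, i.e., a Pick function, which by Loewner's integral representation for operator concave functions is operator concave on $(-1,1)$. An alternative route is via the Frank--Lieb variational formula~\cite{frank2013monotonicity}, which expresses $\Psi_\alpha(X,Y)$ as an infimum of expressions that are manifestly operator concave in $t$ along each prescribed line, using that operator concavity is preserved by pointwise infima.

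Finally, optimality of the parameter $1+2n$ is handled by standard lower-bound arguments for self-concordant barriers on cones: exploiting the product-cone structure visible along appropriate directions, the $X$ and $Y$ components each force a contribution of at least $n$ (the optimal barrier parameter of $\mathbb{H}^n_+$), while the scalar $t$-component contributes the remaining unit. The principal difficulty of the proof is thus concentrated in the operator convexity/concavity along lines verification, handled by distinct techniques in the two ranges of $\alpha$; the remaining ingredients---self-concordance of $-\log\det$, logarithmic homogeneity, and the optimality lower bound---are routine within the framework established by Theorem~\ref{thm:simplified}.
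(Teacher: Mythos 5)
Your high-level plan matches the paper's: verify operator concavity/convexity of $\Psi_\alpha$ along each line segment of the cone, invoke Theorem~\ref{thm:compatibility-operator-concave} together with the $2n$-self-concordant barrier $-\log\det X - \log\det Y$ via Lemma~\ref{lem:compatibility-to-barrier}, observe logarithmic homogeneity from degree-one homogeneity of $\Psi_\alpha$, and obtain the lower bound on the barrier parameter from \cite[Corollary 3.13]{fawzi2023optimal}. For $\alpha\in[\tfrac12,1]$ your Epstein--Hiai route is exactly what the paper does (Lemmas~\ref{lem:f-is-pick}, \ref{lem:transpose-is-concave}, \ref{cor:localization}), although you omit the two technical steps it actually requires: the Pick-function argument applies to the transpose $\hat F(t)=tF(1/t)$ on $(1,\infty)$, so one must transfer operator monotonicity of $\hat F$ to operator concavity of $F$ on $(0,1)$ and $(-1,0)$ separately, and then glue across $t=0$ via the Hansen--Tomiyama localization lemma.

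The genuine gap is in your treatment of $\alpha\in(1,2]$. You propose to ``invoke the result of~\cite{faybusovich2017matrix,coey2023conic,fawzi2023optimal}\ldots that such perspectives are operator convex along lines,'' but those works establish \emph{compatibility} of a single noncommutative perspective $P_g$, not the much stronger property of being operator convex along every line segment. The bridge from ``$\mathbb{H}^n_+$-concave for all $n$, respecting direct sums and simultaneous unitaries'' to ``operator concave along lines'' is precisely the new content of Theorem~\ref{thm:nc-function}, and it requires a genuine argument (representing the matrix-valued extension $F(T)$ as a positive linear map applied to $f_{nm}$ evaluated on affine arguments $\mathbb{I}\otimes X_i + T\otimes H_i$). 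Moreover, $\Psi_\alpha$ for $\alpha\in(1,2]$ is \emph{not} a single perspective precomposed with a positive linear map: the paper expresses it through the nested composition $P_{g,h}(X,Y,Z)=P_g(X,P_h(Y,Z))$ with both $g(x)=-x^{1-\alpha}$ and $h(x)=x^{1/\alpha}$ nonlinear, and joint concavity of that nested composition (Lemma~\ref{lem:perspective-composition}, Corollary~\ref{thm:compatibility-perspective}) is again new. So your plan, as written, would silently assume the paper's two main technical lemmas for this range of $\alpha$ rather than prove them.
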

\begin{proof}
    See Section~\ref{sec:main-proof}.
\end{proof}


Note that $\closure \epigraph \Psi_\alpha$ and $\closure \hypograph \Psi_\alpha$ are both proper convex cones for the appropriate ranges of $\alpha$ as $\Psi_\alpha$ is positively homogeneous of degree one, i.e., $\Psi_\alpha(\lambda X, \lambda Y)=\lambda \Psi_\alpha(X,Y)$ for all $\lambda>0$ and $X,Y\in\mathbb{H}^n_{++}$. Although we know that the epigraph of $\Psi_\alpha$ for $\alpha\in(2,\infty)$ is also a convex cone, we are not aware of an efficiently computable self-concordant barrier for this cone. We provide a brief discussion about this range of $\alpha$ in Section~\ref{sec:conclusion}.

If $\alpha\in[\frac{1}{2}, 1)$, the sandwiched R\'enyi entropy $D_\alpha$ is convex. In this setting, we can directly give a self-concordant barrier for the (conic hull of the) epigraph of $D_\alpha$, i.e., the epigraph of the perspective of the sandwiched R\'enyi entropy
\begin{equation}\label{eqn:perspective-of-entropy}
    \bm{D}_\alpha\divz{u}{X}{Y} \coloneqq u D_\alpha\divx{u^{-1}X}{u^{-1}Y},
\end{equation}
which is defined on $\mathbb{R}_{++}\times\mathbb{H}^n_{++}\times\mathbb{H}^n_{++}$.

\begin{thm}\label{thm:direct-renyi-barrier}
    For any positive integer $n$ and $\alpha\in[\frac{1}{2}, 1)$, the function 
    \begin{equation*}
        (t, u, X, Y)\in\mathbb{R}\times\mathbb{R}_{++}\times\mathbb{H}^n_{++}\times\mathbb{H}^n_{++} \mapsto -\log(t - \bm{D}_\alpha\divz{u}{X}{Y}) - \log(u)-\log\det(X) - \log\det(Y),
    \end{equation*}
    is a $(2+2n)$-self-concordant barrier for
    \begin{equation*}
        \closure \epigraph \bm{D}_\alpha = \closure \{ (t, u, X, Y)\in\mathbb{R}\times\mathbb{R}_{++}\times\mathbb{H}^n_{++}\times\mathbb{H}^n_{++} : t \geq \bm{D}_\alpha\divz{u}{X}{Y} \}. 
    \end{equation*}
    Moreover, this barrier is optimal in the sense that any self-concordant barrier for $\closure\epigraph \bm{D}_\alpha$ when $\alpha\in[\frac{1}{2}, 1)$ has parameter at least $2+2n$. 
\end{thm}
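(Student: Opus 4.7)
My plan is to apply Theorem~\ref{thm:simplified} to $f := \bm{D}_\alpha$ together with the standard $(1+2n)$-LHSCB $G(u, X, Y) := -\log(u) - \log\det(X) - \log\det(Y)$ for the closure of the domain $\mathbb{R}_{++} \times \mathbb{H}^n_{++} \times \mathbb{H}^n_{++}$. This would directly yield a $(1 + (1+2n)) = (2+2n)$-self-concordant barrier of exactly the form claimed. Logarithmic homogeneity then follows from the degree-one positive homogeneity $\bm{D}_\alpha\divz{\lambda u}{\lambda X}{\lambda Y} = \lambda\, \bm{D}_\alpha\divz{u}{X}{Y}$ for $\lambda > 0$, which makes $\closure\epigraph\bm{D}_\alpha$ a proper convex cone on which the proposed barrier is logarithmically homogeneous.

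The central step is to verify the hypothesis of Theorem~\ref{thm:simplified}: that $\bm{D}_\alpha$ is operator convex along every line in its domain. I would proceed in two stages. First, establish that $D_\alpha\divx{X}{Y} = \log \Psi_\alpha(X,Y) / (\alpha - 1)$ is operator convex along lines for $\alpha \in [1/2, 1)$. This uses that $\Psi_\alpha$ is operator concave along lines for $\alpha \in [1/2, 1]$ (an ingredient already needed in the proof of Theorem~\ref{thm:renyi-barrier-i}), combined with the standard composition rule that post-composing a positive operator concave scalar function with the operator monotone, operator concave function $\log$ preserves operator concavity; dividing by the negative constant $\alpha - 1 < 0$ then flips this to operator convexity. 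Second, I would appeal to a perspective-preservation result, since $\bm{D}_\alpha$ is by definition the perspective of $D_\alpha$: if $g$ is operator convex along lines, then so is $\bar g(u, x) := u g(x/u)$. Such a result is natural in the framework of this paper, which already treats noncommutative perspectives via operator convexity along lines.

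For the optimality of the parameter $2+2n$, I would invoke the standard Nesterov-Nemirovskii lower bound on LHSCB parameters of proper cones in terms of the length of the longest chain of faces. A chain of length $2+2n$ in $\closure\epigraph\bm{D}_\alpha$ can be constructed by successively adding one step for the $t$ direction, one for the $u$ direction, and length-$n$ chains of PSD faces arising from the face structure of $\mathbb{H}^n_+$ for each of $X$ and $Y$.

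The main obstacle I anticipate is the perspective-preservation step. While joint operator convexity of $(u, s) \mapsto u g(s/u)$ is classical for univariate operator convex $g$, extending this to "operator convex along lines" for multivariate $g$ is delicate: the image of a line $t \mapsto (u_0 + t h_u, x_0 + t H_x)$ under $(u, x) \mapsto x/u$ is not a line but a rational curve, so operator convexity along lines of $g$ does not obviously transport to operator convexity along lines of $\bar g$. Rigorous verification likely requires either a reduction to the stronger main technical Theorem~\ref{thm:compatibility-operator-concave}, which presumably accommodates such rational reparametrizations, or an integral/variational representation of $D_\alpha$ from which operator convexity along lines of the perspective can be read off directly.
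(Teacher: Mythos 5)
You correctly identified the main obstacle in your own plan: passing from ``$D_\alpha$ operator convex along lines'' to ``$\bm{D}_\alpha$ operator convex along lines'' is not a formal perspective-preservation step, because a line in $(u,X,Y)$-space maps to a rational curve under $(u,X,Y)\mapsto(X/u,Y/u)$. Your first stage (composing the operator concave curve $F(t)=\Psi_\alpha(X+tH,Y+tV)$ with the operator monotone, operator concave $\log$, then dividing by $\alpha-1<0$) is sound, but the second stage is exactly where the proof is missing, and no general ``operator convex along lines $\Rightarrow$ perspective operator convex along lines'' lemma is established or used in the paper.

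The paper closes this gap with a structural observation specific to $\bm{D}_\alpha$: since $\Psi_\alpha$ is positively homogeneous of degree one, one has
\begin{equation*}
\bm{D}_\alpha\divz{u}{X}{Y} = \frac{u}{\alpha-1}\log\!\left(\frac{\Psi_\alpha(X,Y)}{u}\right) = \frac{1}{\alpha-1}\,P_{\log}\bigl(u,\Psi_\alpha(X,Y)\bigr),
\end{equation*}
so that restricting along a line $t\mapsto(u+ts,X+tH,Y+tV)$ gives
\begin{equation*}
G(t)=\frac{1}{\alpha-1}\,P_{\log}\bigl(u+ts,\,F(t)\bigr).
\end{equation*}
Crucially, both arguments of $P_{\log}$ are now functions of $t$ alone: $u+ts$ is affine in $t$, and $F$ is operator concave in $t$ (already established in the proof of Lemma~\ref{lem:renyi-compatibility-i}). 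Extending to matrix arguments $T$, one gets $G(T)=\frac{1}{\alpha-1}P_{\log}(u\mathbb{I}+sT,F(T))$, and Lemma~\ref{lem:perspective-composition} (with $g=\log$ operator monotone and concave, $h=F$ operator concave) directly yields that $G$ is operator convex on $(-1,1)$. This gives $(\mathbb{R}_+,1)$-compatibility of $-\bm{D}_\alpha$ via Theorem~\ref{thm:compatibility-operator-concave}, and Lemma~\ref{lem:compatibility-to-barrier} then produces the $(2+2n)$-barrier. In short: the degree-one homogeneity of $\Psi_\alpha$ is precisely what lets you rewrite the perspective so that the rational map $X/u$ never appears, sidestepping the difficulty you anticipated.

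On optimality, your ``length of longest chain of faces'' bound is not the argument used, and I am not aware of a clean general lower bound of that form for self-concordant barriers of arbitrary proper cones. The paper instead restricts $\bm{D}_\alpha$ to diagonal matrices, obtaining a convex positively-homogeneous function on $\mathbb{R}^{2n+1}_{++}$, and invokes Lemma~\ref{lem:optimal-parameter} (from~\cite{fawzi2023optimal}) to get a lower bound of $1+(2n+1)=2n+2$, which transfers back to the matrix cone by the usual restriction argument. You should replace your face-chain heuristic with this restriction-to-the-diagonal argument.

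Finally, a small point: you framed the barrier construction via Theorem~\ref{thm:simplified}, which only handles scalar-valued $f$ and gives a $(1+\nu)$ parameter; this is fine here since $\bm{D}_\alpha$ is scalar-valued, and is consistent with what the paper does through the equivalent compatibility route.
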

\begin{proof}
    See Section~\ref{sec:main-proof}.
\end{proof}






\section{Preliminaries}

We denote the set of $n \times n$ Hermitian matrices as $\mathbb{H}^n$ with trace inner product $\inp{X}{Y}=\tr[X^*Y]$, where $X^*$ denotes the conjugate transpose of a complex matrix $X$. Similarly, we denote the positive semidefinite cone as $\mathbb{H}^n_+$, and its interior as $\mathbb{H}^n_{++}$. In the remainder of this section, we provide some background on operator monotone and operator convex functions, as well as self-concordant barriers. 


\subsection{Operator monotonicity and convexity}\label{subsec:operator}
Consider a finite-dimensional real vector space $\mathbb{V}$ and proper convex cone $\mathcal{K}\subset\mathbb{V}$. We define the partial ordering $x \succeq_\mathcal{K} y$ to mean $x - y \in \mathcal{K}$ for $x,y\in\mathbb{V}$. When we omit the subscript, we refer to the Loewner ordering, i.e., we use $X \succeq Y$ to mean $X-Y\in\mathbb{H}^n_+$ for $X,Y\in\mathbb{H}^n$. 

Now consider a second finite-dimensional real vector space $\mathbb{V}'$, a proper convex cone $\mathcal{K}'\subset\mathbb{V}'$, and a function $f:\mathbb{V}'\rightarrow\mathbb{V}$. We say that $f$ is $(\mathcal{K}', \mathcal{K})$\emph{-monotone} if for all $x,y\in\domain f$ we have
\begin{equation*}
     x\succeq_{\mathcal{K}'} y \quad \Longrightarrow \quad f(x) \succeq_\mathcal{K} f(y).
\end{equation*}
We say that the function $f$ is $\mathcal{K}$\emph{-convex} if for all $x,y\in\domain f$ we have
\begin{equation*}
     \lambda f(x) + (1-\lambda) f(y) \succeq_\mathcal{K}  f(\lambda x + (1-\lambda)y), \qquad \forall \lambda\in[0,1].
\end{equation*}
Similarly, we say that $f$ is $\mathcal{K}$-concave if $-f$ is $\mathcal{K}$-convex.

Now consider a real-valued function $g$ defined on the interval $(a,b)$ where $-\infty\leq a<b\leq\infty$. We can extend this function to be defined on Hermitian matrices $X\in\mathbb{H}^n$ whose eigenvalues are in $(a,b)$ as follows. If $X$ has the spectral decomposition $X=\sum_{i=1}^n\lambda_i v_iv_i^*$ where $\lambda_i\in(a, b)$ for all $i=1,\ldots,n$, then we define $g(X) = \sum_{i=1}^n g(\lambda_i) v_iv_i^*$. Given this, we say that the function $g$ is \emph{operator monotone} if for all positive integers $n$ and matrices $X,Y\in\mathbb{H}^n$ with eigenvalues in $(a,b)$, we have
\begin{equation*}
    X\succeq Y \quad \Longrightarrow \quad g(X) \succeq g(Y),
\end{equation*}
i.e., $g$ is $(\mathbb{H}^n_+,\mathbb{H}^n_+)$-monotone for all positive integers $n$. We say that $g$ is \emph{operator convex} if for all positive integers $n$ and matrices $X,Y\in\mathbb{H}^n$ with eigenvalues in $(a,b)$, we have
\begin{equation*}
    \lambda g(X) + (1-\lambda) g(Y) \succeq  g(\lambda X + (1-\lambda)Y), \qquad \forall \lambda\in[0,1],
\end{equation*}
i.e., $g$ is $\mathbb{H}^n_+$-convex for all positive integers $n$. Similarly, we say $g$ is operator concave if $-g$ is operator convex.

Important examples of these functions include $x\mapsto\log(x)$, $x\mapsto x^p$ for $p\in[0, 1]$, and $x\mapsto -x^p$ for $p\in[-1, 0]$, which are all operator monotone and operator concave on $(0, \infty)$. Similarly, the function $x\mapsto x^p$ for $p\in(1,2]$ is operator convex on $(0, \infty)$, but is not operator monotone. See, e.g.,~\cite[Theorem 2.6]{carlen2018inequalities}, for a proof of these results.

There is an elegant theory behind operator monotone and operator convex functions (see, e.g.,~\cite{hiai2010matrix,simon2019loewner}). One important result is Loewner's theorem, which relates operator monotone functions to Pick functions, and which provides us with an integral representation for the class of operator monotone functions.

\begin{lem}[Loewner's Theorem]\label{lem:loewner}
    For a real-valued function $g$ defined on the interval $(a, b)$, where $-\infty\leq a < b \leq \infty$, the following statements are equivalent:
    \begin{enumerate}[label=(\roman*), ref=\ref{lem:loewner}(\roman*)]
        \item $g$ is operator monotone on $(a, b)$.
        \item $g$ has an analytic continuation from $(a, b)$ to the upper half-plane $\mathbb{C}^+\coloneqq\{ z\in\mathbb{C} : \im z > 0 \}$ that maps $\mathbb{C}^+$ into $\mathbb{C}^+$ (i.e., $g$ is a \emph{Pick function}). 
        \item $g$ has the following integral representation
        \begin{equation*}
            g(x) = \alpha + \beta x + \int_{\mathbb{R} \setminus (a,b) } \frac{1}{s-x}-\frac{s}{s^2+1}\,d\mu(s), \quad\forall x\in(a, b),
        \end{equation*}
        where $\alpha\in\mathbb{R}$, $\beta\geq0$, and $\mu$ is a positive finite Borel measure on $\mathbb{R}\setminus(a,b)$.
    \end{enumerate}    
\end{lem}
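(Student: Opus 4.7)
The plan is to prove the cycle of implications, with (ii) $\Leftrightarrow$ (iii) handled by the Nevanlinna--Herglotz representation theorem, (iii) $\Rightarrow$ (i) by direct verification term-by-term, and (i) $\Rightarrow$ (ii) by the classical Loewner-matrix / divided-difference argument. The last step is the nontrivial core of Loewner's theorem, so I would only sketch it and defer to the modern treatments in \cite{hiai2010matrix,simon2019loewner} for the full details.

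For (iii) $\Rightarrow$ (i), I would check operator monotonicity of each piece of the integral representation. The affine part $\alpha + \beta x$ is operator monotone whenever $\beta \geq 0$. For the resolvent kernel $x \mapsto 1/(s-x)$ with $s \in \mathbb{R} \setminus (a,b)$, suppose $X \succeq Y$ have eigenvalues in $(a,b)$. If $s > b$, then $sI - X$ and $sI - Y$ are positive definite with $sI - X \preceq sI - Y$, so Loewner antitonicity of matrix inversion yields $(sI-X)^{-1} \succeq (sI-Y)^{-1}$; the case $s < a$ is symmetric. Since operator monotonicity is preserved by nonnegative linear combinations and by monotone pointwise limits, integrating against the positive Borel measure $\mu$ yields an operator monotone function on $(a,b)$ (the constant term $-s/(s^2+1)$ contributes nothing to monotonicity).

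For (ii) $\Leftrightarrow$ (iii), I would invoke the Nevanlinna--Herglotz representation of analytic functions mapping $\mathbb{C}^+$ into its closure. The direction (iii) $\Rightarrow$ (ii) is immediate: $\im(1/(s-z)) = (\im z)/|s-z|^2 > 0$ for $z \in \mathbb{C}^+$ and $s \in \mathbb{R}$, and $\alpha + \beta z$ maps $\mathbb{C}^+$ into $\mathbb{C}^+$ when $\alpha \in \mathbb{R}$ and $\beta \geq 0$. For the forward direction, since $g$ is real on $(a,b)$, the Schwarz reflection principle extends the Pick function $g$ analytically to $\mathbb{C} \setminus (\mathbb{R} \setminus (a,b))$, and applying the Poisson-type representation of positive harmonic functions to $\im g$ on $\mathbb{C}^+$ produces the displayed formula with $\mu$ supported on $\mathbb{R} \setminus (a,b)$; the correction term $-s/(s^2+1)$ is the standard convergence factor needed because $\mu$ can have polynomial growth at infinity.

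The hard direction, and where the main obstacle lies, is (i) $\Rightarrow$ (ii). Here I would follow the divided-difference approach. One first shows that if $g$ is operator monotone, then for any distinct points $x_1,\ldots,x_n \in (a,b)$ the Loewner matrix $L = \bigl[(g(x_i)-g(x_j))/(x_i-x_j)\bigr]_{i,j=1}^n$ is positive semidefinite; this is proved by differentiating the monotonicity condition $g(X + tH) \succeq g(X)$ at $t=0$ for a diagonal $X$ with entries $x_i$ and rank-one $H$. From PSD-ness of all such $L$ one bootstraps regularity, showing that $g$ is $C^1$, then $C^\infty$, and in fact real-analytic on $(a,b)$. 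Finally, one uses the Loewner matrices as Pick--Nevanlinna interpolation data to construct an analytic continuation of $g$ to $\mathbb{C}^+$ with nonnegative imaginary part, thereby establishing the Pick property. This last construction, passing from a purely real positivity condition to an analytic function on $\mathbb{C}^+$, is the most delicate part of the argument, and is where I would follow the exposition of \cite{simon2019loewner} essentially verbatim.
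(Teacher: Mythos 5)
The paper does not prove Lemma~\ref{lem:loewner}: it records Loewner's theorem as a classical fact and points to \cite{hiai2010matrix,simon2019loewner} for proofs, so there is no paper proof to compare against. Your sketch is a faithful and well-organized summary of the standard proof architecture: the term-by-term verification that the Nevanlinna--Herglotz representation yields operator monotonicity (the only elementary direction), the equivalence of the Pick property with the integral representation via the Herglotz theorem and Schwarz reflection, and the genuinely hard implication from operator monotonicity to the Pick property via positivity of Loewner matrices, bootstrap of regularity, and Pick--Nevanlinna interpolation. Deferring the last step to \cite{simon2019loewner} is the right call; it is a substantial self-contained argument.

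Two small points worth flagging. First, the lemma as stated in the paper asserts that $\mu$ is a positive \emph{finite} Borel measure; the standard Nevanlinna representation only gives $\int (s^2+1)^{-1}\,d\mu(s) < \infty$, and indeed for $g = \log$ on $(0,\infty)$ the representing measure is Lebesgue measure on $(-\infty, 0]$, which is not finite. Either the paper intends a reweighted measure (absorbing the factor $(s^2+1)^{-1}$, which turns the kernel into $(1+sx)/(s-x)$), or the word ``finite'' is a slip; either way your Herglotz-based argument produces the correct growth condition, not finiteness, so you should not try to prove finiteness. Second, (ii) as stated requires $g$ to map $\mathbb{C}^+$ strictly into $\mathbb{C}^+$, which excludes real constants even though constants are operator monotone; the open mapping theorem shows this is the only gap, so the equivalence holds for non-constant $g$, and you may want to note this when invoking the strict inequality $\im(1/(s-z)) > 0$ to conclude $\im g(z) > 0$ (if both $\beta = 0$ and $\mu = 0$ the conclusion degenerates). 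Neither issue affects the overall correctness of your proof plan.
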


It is also well known that operator convex functions have a similar integral representation. 

\begin{lem}[{\cite[Theorem 2.7.6]{hiai2010matrix}}]\label{lem:operator-convex}
    Let $g$ be an operator convex function defined on the interval $(-1, 1)$. Then there exists a unique positive finite Borel measure $\mu$ on $[-1, 1]$ such that
    \begin{equation*}
        g(x)=g(0) + g'(0)x + \frac{1}{2} g''(0) \int_{-1}^1 \frac{x^2}{1-sx} \, d\mu(s), \quad \forall x\in(-1, 1).
    \end{equation*}
\end{lem}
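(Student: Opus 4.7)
The plan is to derive this integral representation from Loewner's theorem (Lemma~\ref{lem:loewner}) via a classical transformation linking operator convex functions to operator monotone ones. After reducing to the case $g(0) = 0$ (which only shifts both sides of the identity by the same constant), the key structural step is to show that the function $\phi(x) := g(x)/x$, extended continuously at the origin by $\phi(0) := g'(0)$, is operator monotone on $(-1, 1)$. This is a result in the spirit of the Bendat-Sherman/Kraus theorem, with the standard proof applying the matrix convexity inequality $g(\lambda X + (1-\lambda) Y) \preceq \lambda g(X) + (1-\lambda) g(Y)$ to a suitably chosen pair of $2 \times 2$ block matrices built from orthogonal projections in order to extract an inequality of the form $Y^{-1} g(Y) \succeq X^{-1} g(X)$ whenever $-I \prec X \preceq Y \prec I$ and $X, Y$ are invertible; operator monotonicity on all of $(-1, 1)$ then follows by continuity.

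Once $\phi$ is operator monotone, Loewner's theorem yields
\begin{equation*}
    \phi(x) = \alpha + \beta x + \int_{\mathbb{R} \setminus (-1,1)} \left[\frac{1}{s-x} - \frac{s}{s^2+1}\right] d\nu(s),
\end{equation*}
for some $\alpha \in \mathbb{R}$, $\beta \geq 0$, and positive finite Borel measure $\nu$ on $\mathbb{R} \setminus (-1, 1)$. Multiplying by $x$ recovers $g$, and the change of variables $s = 1/t$ (a bijection between $\mathbb{R} \setminus (-1, 1)$ and $[-1, 1] \setminus \{0\}$) rewrites the integral over a compact set. The algebraic decomposition $\frac{xt}{1-xt} = xt + \frac{x^2 t^2}{1-xt}$ splits the transformed integrand into a piece linear in $x$, which combines with the existing linear term, and a piece proportional to $\frac{x^2 t^2}{1-xt}$. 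Defining $\mu$ on $[-1, 1]$ by $d\mu(t) = t^2\, d\tilde\nu(t)$ on $[-1, 1] \setminus \{0\}$ together with a point mass of weight $\beta$ at $t = 0$ (consistent since $\frac{x^2}{1-xt}\big|_{t=0} = x^2$) then gives a representation of the form $g(x) = c x + \int_{-1}^1 \frac{x^2}{1-xt}\, d\mu(t)$. Matching Taylor coefficients at $x = 0$ identifies $c = g'(0)$ and $\mu([-1, 1]) = g''(0)/2$, recovering the stated formula after factoring out $g''(0)/2$ (with the convention $\mu = 0$ when $g''(0) = 0$, in which case $g$ is affine).

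Uniqueness of $\mu$ follows either from uniqueness in Loewner's theorem or directly from the observation that the Taylor coefficients of $g$ at $0$ determine all moments $\int_{-1}^1 t^k\, d\mu(t)$, and the Hausdorff moment problem on the compact interval $[-1, 1]$ is determinate. The main obstacle is the Bendat-Sherman step: converting the second-order operator convexity inequality into a first-order operator monotonicity statement requires the clever block matrix construction mentioned above, after which the rest of the proof is essentially careful bookkeeping in the change of variables.
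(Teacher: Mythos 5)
The paper does not prove this lemma; it cites it verbatim as~\cite[Theorem~2.7.6]{hiai2010matrix}. Your sketch reproduces the standard derivation of that textbook result: reduce to $g(0)=0$, invoke the Bendat--Sherman theorem to show that $x\mapsto g(x)/x$ (with value $g'(0)$ at the origin) is operator monotone, feed it into Loewner's theorem (Lemma~\ref{lem:loewner}) to get an integral representation supported on $\mathbb{R}\setminus(-1,1)$, and then push forward under $s\mapsto 1/s$ to land on $[-1,1]$, with the Loewner coefficient $\beta\geq 0$ reappearing as a point mass at the origin (consistent because $x^2/(1-sx)\big|_{s=0}=x^2$). The algebraic split $\frac{xt}{1-xt}=xt+\frac{x^2t^2}{1-xt}$ and the Taylor-coefficient matching are both correct, and the Hausdorff moment-problem argument does give uniqueness. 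Your description of the Bendat--Sherman step is also accurate: for invertible $Y$, the matrix $Y^{-1}g(Y)$ coincides with the matrix function $h(Y)$ for $h(x)=g(x)/x$, so the $2\times 2$ block-projection trick does yield genuine operator monotonicity, not merely a scalar inequality. One small caveat worth flagging: the ``unique'' measure in the statement is only unique under the implicit normalization that makes $\mu$ a probability measure, and when $g''(0)=0$ the function is affine and the representation degenerates (any $\mu$ works), so uniqueness really requires $g''(0)>0$; this edge case is harmless in the paper's use of the lemma, where $F''(0)=0$ makes the compatibility inequality trivially true.
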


\subsection{Self-concordant barriers}\label{sec:self-concordant}

For a finite-dimensional real vector space $\mathbb{V}$, consider a $C^3$, closed, strictly convex function $F$ with open domain $\domain F\coloneqq\{ x\in\mathbb{V} : F(x) < \infty \}$. We say that $F$ is \emph{self-concordant} if
\begin{equation*}
    \abs{\mathsf{D}^3F(x)[h, h, h]} \leq 2(\mathsf{D}^2F(x)[h, h])^{3/2},
\end{equation*}
for all $x\in\domain F$ and $h\in\mathbb{V}$, where
\begin{equation*}
    \mathsf{D}^k F(x)[h_1,\ldots,h_k] = \frac{\partial^k}{\partial t_1\cdots\partial t_k} \bigg|_{t_1=\ldots=t_k=0}\ F(x+ t_1h_1 + \cdots + t_kh_k),
\end{equation*}
denotes the $k$-th directional derivative at $x$ along the directions $h_1,\ldots,h_k$. Since $\domain F$ is open and the epigraph of $F$ is closed, $F$ is a barrier function for $\closure\domain F$, as any sequence $x_k\in\domain F$ converging to the boundary of $\domain F$ satisfies $F(x_k)\rightarrow\infty$~\cite[Theorem 5.13]{nesterov2018lectures}. Additionally, $F$ is a $\nu$-self-concordant barrier if
\begin{equation*}
    2 \mathsf{D}F(x)[h] - \mathsf{D}^2F(x)[h,h] \leq \nu,
\end{equation*}
for all $x\in\domain F$ and $h\in\mathbb{V}$. When $\domain F$ is a convex cone, we say that $F$ is $\nu$-logarithmically homogeneous if
\begin{equation*}
    F(tx) = F(x) - \nu\log(t),
\end{equation*}
for all $x\in\domain F$ and $t>0$. If $F$ is a $\nu$-logarithmically homogeneous self-concordant barrier, then it is also a $\nu$-self-concordant barrier~\cite[Lemma 5.4.3]{nesterov2018lectures}. 

Similar to~\cite{fawzi2023optimal}, the main technique we will use to construct logarithmically homogeneous self-concordant barriers is to use compatibility of functions with respect to their domains, which we define below. Note that, by convention, compatibility is defined in terms of concave functions rather than convex functions. Because of this, in what follows we will typically formulate our general results in terms of concave functions and their hypographs.
\begin{defn}[{\cite[Definition 5.1.1]{nesterov1994interior}}]\label{def:compatibility}
    Let $\mathbb{V}$ and $\mathbb{V}'$ be finite-dimensional real vector spaces, let $\mathcal{K}\subset\mathbb{V}'$ be a closed, convex cone, and let $f:\domain f\rightarrow\mathbb{V}'$ be a $\mathcal{K}$-concave $C^3$ function with open domain $\domain f \subset \mathbb{V}$. Then $f$ is $(\mathcal{K}, \beta)$\emph{-compatible} with the domain $\closure\domain f$ if there exists $\beta\geq0$ such that
    \begin{equation*}
        \mathsf{D}^3 f(x)[h, h, h] \preceq_\mathcal{K} -3\beta \mathsf{D}^2 f(x)[h, h],
    \end{equation*}
    for all $x\in\domain f$ and $h\in\mathbb{V}$ such that $x\pm h \in \closure\domain f$.
\end{defn}
Once we have established compatibility of a function, we can use the following result to construct a self-concordant barrier for the hypograph of the function.
\begin{lem}[{\cite[Theorem 5.4.4]{nesterov2018lectures}}]\label{lem:compatibility-to-barrier}
    Let $\mathbb{V}$ and $\mathbb{V}'$ be finite-dimensional real vector spaces, and let $\mathcal{K}\subset\mathbb{V}'$ be a closed, convex cone. Let $f:\domain f \rightarrow \mathbb{V}'$ be a $\mathcal{K}$-concave $C^3$ function with open domain $\domain f \subset \mathbb{V}$, and which is $(\mathcal{K}, \beta)$-compatible with $\closure\domain f$. Let $G$ be a $\nu$-self-concordant barrier for $\closure\domain f$, and $H$ be an $\eta$-self-concordant barrier for $\mathcal{K}$. Then 
    \begin{equation*}
        (t, x) \mapsto H(f(x) - t) + \beta^3G(x),
    \end{equation*}
    defined on the domain $\mathbb{V}' \times \domain f$ is an $(\eta+\beta^3\nu)$-self-concordant barrier for the set
    \begin{equation*}
        \closure\hypograph f = \closure \{ (t, x) \in \mathbb{V}'\times \domain f : t \preceq_\mathcal{K} f(x) \}.
    \end{equation*}
\end{lem}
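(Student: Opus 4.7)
The plan is to verify directly the two defining inequalities of a self-concordant barrier for $F(t,x) = H(f(x)-t) + \beta^3 G(x)$, by reducing to a one-variable computation along an arbitrary direction. Fix $(t,x) \in \mathbb{V}' \times \domain f$ and $\xi = (s,h) \in \mathbb{V}' \times \mathbb{V}$, and set $\psi = f(x)-t \in \interior\mathcal{K}$, together with the chain-rule abbreviations $u = \mathsf{D}f(x)[h] - s$, $v = \mathsf{D}^2 f(x)[h,h]$, $w = \mathsf{D}^3 f(x)[h,h,h]$. Expanding the successive derivatives of $F$ along $\xi$ and writing $A = \mathsf{D}^2 H(\psi)[u,u]$, $P = \mathsf{D}H(\psi)[v]$, $B = \mathsf{D}^2 G(x)[h,h]$, one obtains $\mathsf{D}^2 F[\xi,\xi] = A + P + \beta^3 B$ and $\mathsf{D}^3 F[\xi,\xi,\xi] = \mathsf{D}^3 H(\psi)[u,u,u] + 3\mathsf{D}^2 H(\psi)[u,v] + \mathsf{D}H(\psi)[w] + \beta^3 \mathsf{D}^3 G(x)[h,h,h]$.

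From the hypotheses I then extract three algebraic facts: (i) $\mathcal{K}$-concavity of $f$ gives $-v \in \mathcal{K}$; (ii) applying $(\mathcal{K},\beta)$-compatibility to both $h$ and $-h$ (after rescaling $h$ so that $x \pm h \in \closure\domain f$) gives $-3\beta v \pm w \in \mathcal{K}$; and (iii) the standard conic-barrier fact $-\mathsf{D}H(\psi) \in \mathcal{K}^*$ makes $\mathsf{D}H(\psi)$ nonpositive on $\mathcal{K}$. Pairing (i)--(iii) yields $A, P, B \geq 0$ and $|\mathsf{D}H(\psi)[w]| \leq 3\beta P$. The barrier-parameter inequality $(\mathsf{D}F[\xi])^2 \leq (\eta + \beta^3 \nu)\,\mathsf{D}^2 F[\xi,\xi]$ then follows from a weighted Cauchy--Schwarz argument: the barrier properties of $H$ and $G$ give $(\mathsf{D}H(\psi)[u])^2 \leq \eta A$ and $(\mathsf{D}G(x)[h])^2 \leq \nu B$, which combine via $(\sqrt{\eta A} + \beta^3 \sqrt{\nu B})^2 \leq (\eta + \beta^3 \nu)(A + \beta^3 B)$, after which the nonnegative $P$ is dropped.

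The main obstacle is the self-concordance inequality $|\mathsf{D}^3 F[\xi,\xi,\xi]| \leq 2(\mathsf{D}^2 F[\xi,\xi])^{3/2}$. Three of the four contributions to $\mathsf{D}^3 F[\xi,\xi,\xi]$ are bounded cleanly: $|\mathsf{D}^3 H(\psi)[u,u,u]| \leq 2A^{3/2}$ (self-concordance of $H$), $|\beta^3 \mathsf{D}^3 G(x)[h,h,h]| \leq 2\beta^3 B^{3/2}$ (self-concordance of $G$), and $|\mathsf{D}H(\psi)[w]| \leq 3\beta P$ (compatibility). The crux is the cross term $3\mathsf{D}^2 H(\psi)[u,v]$, which couples $H$ with the curvature of $f$ and is not directly bounded by any single hypothesis. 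The natural route is Cauchy--Schwarz in the positive semidefinite form $\mathsf{D}^2 H(\psi)$, giving $|\mathsf{D}^2 H(\psi)[u,v]| \leq \sqrt{A}\,\sqrt{\mathsf{D}^2 H(\psi)[v,v]}$, together with an upper bound on $\mathsf{D}^2 H(\psi)[v,v]$ in terms of $P$. Such a bound can be derived by exploiting that $-v$ lies in the recession cone $\mathcal{K}$, so that $r \mapsto H(\psi + r(-v))$ is a self-concordant function on $[0,\infty)$ whose first derivative at $0$ is $-P$ and second derivative at $0$ is $\mathsf{D}^2 H(\psi)[v,v]$. Assembling the four bounds via the elementary inequality $(a + b + c)^{3/2} \geq a^{3/2} + b^{3/2} + c^{3/2}$ (for $a, b, c \geq 0$) and a Young-style absorption of the cross term then yields $|\mathsf{D}^3 F[\xi,\xi,\xi]| \leq 2(A + P + \beta^3 B)^{3/2}$, as required. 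Finally, $F$ is a barrier for $\closure\hypograph f$ since approach to the boundary either forces $f(x) \to t$ (making $H(\psi) \to +\infty$) or drives $x$ to $\partial\closure\domain f$ (making $G(x) \to +\infty$).
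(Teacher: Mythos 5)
The paper's proof is a three-line citation: it identifies the data $(\xi, E_1, E_2, E_3, Q, Q_2, \Phi, F)$ to plug into \cite[Theorem 5.4.4]{nesterov2018lectures} and stops there. You instead attempt a from-scratch re-derivation of the inequalities behind that cited theorem. That is a legitimate thing to do, but as outlined it has two genuine gaps.

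First, your treatment of the ``crux'' cross term $3\mathsf{D}^2 H(\psi)[u,v]$ is not self-contained. You propose to bound $\mathsf{D}^2 H(\psi)[v,v]$ in terms of $P=\mathsf{D}H(\psi)[v]$ by noting that $r\mapsto H(\psi+r(-v))$ is a self-concordant function on $[0,\infty)$. Self-concordance alone does \emph{not} give $\phi''(0)\le\phi'(0)^2$: the function $\phi(r)=r^2/2$ is self-concordant on all of $\mathbb{R}$ with $\phi'(0)=0$ but $\phi''(0)=1$. The inequality you need, $\mathsf{D}H(\psi)[-v]\le-\bigl(\mathsf{D}^2H(\psi)[v,v]\bigr)^{1/2}$ for the recession direction $-v\in\mathcal{K}$, is precisely the ``semiboundedness'' property of self-concordant \emph{barriers} (Nesterov 2018, the lemma preceding Theorem 5.4.4), and its proof uses the barrier parameter $\eta$, not just self-concordance. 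Your appeal to the weaker fact $-\mathsf{D}H(\psi)\in\mathcal{K}^*$ only gives $P\ge0$ and is not enough.

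Second, you wave at ``rescaling $h$'' but never track the effect of doing so, and that is where the $\beta^3$ in front of $G$ actually becomes necessary. Compatibility is only assumed for directions with $x\pm h\in\closure\domain f$; for a general $h$ one must scale to $\rho h$ with $\rho$ the largest admissible factor, which by the Dikin-ellipsoid property of $G$ satisfies $\rho\sqrt{B}\ge 1$. Applying compatibility to $\pm\rho h$ yields $-3\beta v\pm\rho w\in\mathcal{K}$, hence $\abs{\mathsf{D}H(\psi)[w]}\le 3\beta P/\rho\le 3\beta\sqrt{B}\,P$, not $3\beta P$ as you state. The final inequality to verify is therefore
\begin{equation*}
    2A^{3/2}+3\sqrt{A}\,P+3\beta\sqrt{B}\,P+2\beta^3 B^{3/2}\;\le\;2\bigl(A+P+\beta^3 B\bigr)^{3/2},
\end{equation*}
which you do not establish, and which in fact requires $\beta\ge1$ (implicitly assumed in the Nesterov--Nemirovskii framework; it is what makes the $\beta^3$ weighting work). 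Without closing these two steps the proof is incomplete.
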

\begin{proof}
    Using the notation from~\cite[Theorem 5.4.4]{nesterov2018lectures}, let $\xi=f$, $E_1=\mathbb{V}$, $E_2=E_3=\mathbb{V}'$, $Q=\domain f$, $Q_2=\{ (y,z)\in\mathbb{V}'\times\mathbb{V}' : y \succeq_{\mathcal{K}} z \}$, $\Phi(y,z)=H(y-z)$, and $F=G$. Clearly, any element of $\mathcal{K}\times\{0\}$ is a recession direction for $Q_2$, and additionally
    \begin{equation*}
        \closure\hypograph f = \closure \{ (z,x)\in \mathbb{V}'\times\domain f : \exists y\in\mathbb{V}',\ f(x)\succeq_{\mathcal{K}} y,\ (y,z)\in Q_2 \},
    \end{equation*}
    which is enough to recognize that~\cite[Theorem 5.4.4]{nesterov2018lectures} implies our desired result.
\end{proof}

We also introduce two important composition rules for compatibility with linear and affine maps when $\mathcal{K}=\mathbb{H}^n_+$ from~\cite[Proposition 3.4]{fawzi2023optimal} and~\cite[Lemma 5.1.3(iii)]{nesterov1994interior}.
\begin{lem}\label{lem:compatibility-composition}
    Let $\mathbb{V}$ and $\mathbb{V}'$ be finite-dimensional real vector spaces, let $f:\domain f \subset \mathbb{V} \rightarrow \mathbb{H}^n$ be a $\mathbb{H}^n_+$-concave $C^3$ function with open domain $\domain f \subset \mathbb{V}$, and which is $(\mathbb{H}^n_+, \beta)$-compatible with the domain $\closure\domain f$.
    \begin{enumerate}[label=(\roman*), ref=\ref{lem:compatibility-composition}(\roman*),leftmargin=25pt]
        \item \label{lem:compatibility-composition-i}Let $\mathcal{A}:\mathbb{H}^n\rightarrow\mathbb{H}^m$ be a positive linear map (i.e., $\mathcal{A}(\mathbb{H}^n_+)\subseteq\mathbb{H}^n_+$). Then $\mathcal{A}\circ f$ is $(\mathbb{H}^m_+, \beta)$-compatible with the domain $\closure\domain f$. 
        \item \label{lem:compatibility-composition-ii}Let $\mathcal{B}:\mathbb{V}'\rightarrow\mathbb{V}$ be an affine map satisfying $\image\mathcal{B}\cap\domain f\neq\varnothing$. Then $f\circ\mathcal{B}$ is $(\mathbb{H}^n_+, \beta)$-compatible with the domain $\mathcal{B}^{-1}(\closure\domain f) \coloneqq \{ x\in\mathbb{V}' : \mathcal{B}(x) \in \closure\domain f \}$.
    \end{enumerate}
\end{lem}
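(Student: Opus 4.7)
My plan is to reduce both parts to the compatibility inequality for $f$ by combining three standard facts: the chain rule for directional derivatives, the commutation of linear maps with such derivatives, and the preservation of the Loewner order under positive linear maps.

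For part~(i), I would first observe that, since $\mathcal{A}$ is linear,
\begin{equation*}
    \mathsf{D}^k (\mathcal{A} \circ f)(x)[h, \ldots, h] = \mathcal{A}\bigl( \mathsf{D}^k f(x)[h, \ldots, h] \bigr),
\end{equation*}
for every $k \geq 1$, $x \in \domain f$, and direction $h$. The composition $\mathcal{A} \circ f$ is $C^3$ on the same open domain $\domain f$, and is $\mathbb{H}^m_+$-concave because $\mathcal{A}$ is a positive linear map and $f$ is $\mathbb{H}^n_+$-concave. Applying $\mathcal{A}$ to both sides of the compatibility inequality for $f$ preserves the cone ordering, so that
\begin{equation*}
    \mathsf{D}^3(\mathcal{A} \circ f)(x)[h,h,h] \preceq_{\mathbb{H}^m_+} -3\beta\, \mathsf{D}^2(\mathcal{A} \circ f)(x)[h,h],
\end{equation*}
for all $x \in \domain f$ and $h$ with $x \pm h \in \closure\domain f$, which is exactly the $(\mathbb{H}^m_+, \beta)$-compatibility condition.

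For part~(ii), I would write $\mathcal{B}(y) = A_0 y + b$ for a linear map $A_0$ and constant $b$, so the chain rule gives
\begin{equation*}
    \mathsf{D}^k(f \circ \mathcal{B})(y)[h, \ldots, h] = \mathsf{D}^k f(\mathcal{B}(y))[A_0 h, \ldots, A_0 h], \qquad k = 2, 3.
\end{equation*}
The domain condition translates cleanly: $y \pm h \in \mathcal{B}^{-1}(\closure\domain f)$ if and only if $\mathcal{B}(y) \pm A_0 h \in \closure\domain f$, since $\mathcal{B}(y \pm h) = \mathcal{B}(y) \pm A_0 h$. Substituting $x = \mathcal{B}(y)$ and direction $A_0 h$ into the compatibility inequality for $f$ immediately yields the compatibility inequality for $f \circ \mathcal{B}$ at $y$ and direction $h$ with the same constant $\beta$. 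Affinity of $\mathcal{B}$ further ensures that $f \circ \mathcal{B}$ inherits $\mathbb{H}^n_+$-concavity from $f$, and the assumption $\image \mathcal{B} \cap \domain f \neq \varnothing$ guarantees that $\mathcal{B}^{-1}(\domain f)$ is a nonempty open set on which $f \circ \mathcal{B}$ is $C^3$.

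I do not anticipate any substantive obstacle; both parts are essentially bookkeeping around the definition of compatibility. The only point I would verify carefully is that $\closure(\mathcal{B}^{-1}(\domain f)) = \mathcal{B}^{-1}(\closure\domain f)$, so that the set appearing in part~(ii) is indeed the closure of the natural domain of $f \circ \mathcal{B}$; this follows from continuity of $\mathcal{B}$ together with the intersection hypothesis.
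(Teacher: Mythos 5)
Your proof is correct. The paper itself does not prove this lemma but cites it from~\cite[Proposition 3.4]{fawzi2023optimal} and~\cite[Lemma 5.1.3(iii)]{nesterov1994interior}, and your argument is precisely the standard one found there: part~(i) reduces to applying the positive linear map $\mathcal{A}$ to the compatibility inequality and using $\mathsf{D}^k(\mathcal{A}\circ f)=\mathcal{A}\circ\mathsf{D}^k f$, while part~(ii) reduces to the chain rule for an affine substitution and the exact correspondence of the domain conditions. One small clarification on the point you flag at the end: the identity $\closure(\mathcal{B}^{-1}(\domain f))=\mathcal{B}^{-1}(\closure\domain f)$ uses not just continuity of $\mathcal{B}$ and $\image\mathcal{B}\cap\domain f\neq\varnothing$, but also convexity and openness of $\domain f$ (take $y_0$ with $\mathcal{B}(y_0)\in\domain f$ and slide along the segment $(1-t)y+ty_0$, noting $\mathcal{B}$ affine sends this segment into $\domain f$ for $t\in(0,1]$); otherwise the reverse inclusion $\mathcal{B}^{-1}(\closure\domain f)\subseteq\closure(\mathcal{B}^{-1}(\domain f))$ can fail.
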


\section{Operator concavity and self-concordance}\label{sec:proof-main}

In this section, we present the main technical result of the paper which establishes a relationship between operator concavity along lines and compatibility. This is summarized in the following theorem. Note that this strengthens the statement in Theorem~\ref{thm:simplified} as, by considering functions which are not necessarily scalar-valued, it applies in greater generality and leads to a stronger conclusion. 

\begin{thm}\label{thm:compatibility-operator-concave}
    Let $\mathbb{V}$ and $\mathbb{V}'$ be finite-dimensional real vector spaces, let $\mathcal{K}\subset\mathbb{V}'$ be a proper, convex cone, and let the dual cone of $\mathcal{K}$ be denoted as $\mathcal{K}_*\subset\mathbb{V}'$. Let $f:\domain f\rightarrow\mathbb{V}'$ be a $C^3$ function with open domain $\domain f \subset \mathbb{V}$. Suppose that for all $z\in\mathcal{K}_*$, $x\in\domain f$ and $h\in\mathbb{V}$ such that $x\pm h \in \closure\domain f$, the function $F:(-1, 1)\rightarrow\mathbb{R}$ defined by
    \begin{equation*}
        F:(-1, 1)\rightarrow\mathbb{R}, \quad F(t) = \inp{z}{f(x + th)},
    \end{equation*}
    is operator concave on $(-1, 1)$. Then $f$ is $(\mathcal{K}, 1)$-compatible with respect to $\closure\domain f$.
\end{thm}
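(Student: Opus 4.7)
The plan is to exploit the properness of $\mathcal{K}$, which ensures that $a \preceq_\mathcal{K} b$ if and only if $\inp{z}{a} \leq \inp{z}{b}$ for every $z \in \mathcal{K}_*$. Dualizing through $\mathcal{K}_*$ thereby reduces the desired compatibility inequality $\mathsf{D}^3 f(x)[h,h,h] \preceq_\mathcal{K} -3\,\mathsf{D}^2 f(x)[h,h]$ to the scalar inequality $\inp{z}{\mathsf{D}^3 f(x)[h,h,h]} \leq -3\,\inp{z}{\mathsf{D}^2 f(x)[h,h]}$ for each $z \in \mathcal{K}_*$. Fix such a $z$ together with $x \in \domain f$ and $h \in \mathbb{V}$ satisfying $x \pm h \in \closure \domain f$, and set $F(t) = \inp{z}{f(x + th)}$. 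The chain rule identifies $F''(0) = \inp{z}{\mathsf{D}^2 f(x)[h,h]}$ and $F'''(0) = \inp{z}{\mathsf{D}^3 f(x)[h,h,h]}$, so it suffices to prove the scalar inequality $F'''(0) \leq -3 F''(0)$.

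Since $F$ is operator concave on $(-1,1)$ by hypothesis, $-F$ is operator convex there, so applying the integral representation of Lemma~\ref{lem:operator-convex} to $-F$ (and absorbing the sign) produces a positive finite Borel measure $\mu$ on $[-1,1]$ with
\begin{equation*}
    F(t) \;=\; F(0) + F'(0)\, t + \tfrac{1}{2}\, F''(0) \int_{-1}^1 \frac{t^2}{1 - s t}\, d\mu(s), \qquad t \in (-1,1).
\end{equation*}
Expanding the kernel as $t^2/(1 - s t) = \sum_{k \geq 0} s^k t^{k+2}$ and reading off the coefficients of $t^2$ and $t^3$, I obtain
\begin{equation*}
    F''(0) \;=\; F''(0)\, \mu([-1,1]), \qquad F'''(0) \;=\; 3\, F''(0) \int_{-1}^1 s\, d\mu(s).
\end{equation*}
If $F''(0) = 0$ then $F$ is affine and the desired inequality degenerates to $0 \leq 0$; otherwise $F''(0) < 0$ (because operator concavity implies ordinary concavity) and the first identity forces $\mu$ to be a probability measure on $[-1,1]$.

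In the nontrivial case, the first moment $\int s\, d\mu$ lies in $[-1,1]$, so $1 + \int s\, d\mu \geq 0$. Multiplying the nonpositive quantity $F''(0)$ by this nonnegative factor gives $F''(0)\bigl(1 + \int s\, d\mu\bigr) \leq 0$, which on substituting $F'''(0) = 3 F''(0) \int s\, d\mu$ rearranges exactly to $F'''(0) \leq -3 F''(0)$. Since $z \in \mathcal{K}_*$ was arbitrary, dualizing back recovers the desired compatibility inequality with $\beta = 1$. I anticipate no serious obstacle; the only small technical point is the interchange of differentiation and integration used to extract the coefficients above, which is routine since $\mu$ is finite and $t^2/(1 - s t)$ is uniformly smooth in $t$ on any compact subset of $(-1,1)$ as $s$ ranges over $[-1,1]$. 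Morally, the proof succeeds because the sharp support bound $|s| \leq 1$ baked into Lemma~\ref{lem:operator-convex} supplies exactly the constant $3$ required for $\beta = 1$-compatibility.
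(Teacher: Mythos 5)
Your proof is correct and follows essentially the same route as the paper: both apply Lemma~\ref{lem:operator-convex} to obtain the integral representation with the kernel $t^2/(1-st)$, differentiate twice and three times under the integral, and combine $F''(0)\leq 0$ with the support bound $|s|\leq 1$ to land on $F'''(0)\leq -3F''(0)$ before dualizing over $z\in\mathcal{K}_*$. One small omission worth filling: Definition~\ref{def:compatibility} requires $f$ to be $\mathcal{K}$-concave before compatibility even makes sense, so you should record explicitly that $\inp{z}{\mathsf{D}^2 f(x)[h,h]}=F''(0)\leq 0$ for all $z\in\mathcal{K}_*$ yields $\mathcal{K}$-concavity of $f$ (as the paper does), rather than only using $F''(0)\leq 0$ as a sign fact inside the final rearrangement.
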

\begin{proof}
    We first show that $f$ is $\mathcal{K}$-concave. Since $F$ is operator concave, and therefore concave in the standard sense, it follows that $\inp{z}{\mathsf{D}^2f(x)[h, h]} = F''(0) \leq 0$. As this holds for all $z\in\mathcal{K}_*$, then $\mathsf{D}^2f(x)[h, h] \preceq_{\mathcal{K}} 0$ for all $x\in\domain f$ and $h\in\mathbb{V}$ such that $x\pm h \in \closure\domain f$. By scaling $h$ by an arbitrary constant, and using the fact that $\mathcal{K}$ is full-dimensional, we extend this result to hold for all $h\in\mathbb{V}$, which shows that $f$ is $\mathcal{K}$-concave, see, e.g.,~\cite[Lemma 5.1.2]{nesterov1994interior}. 
    
    We now prove the desired compatibility result. Given that $F$ is operator concave on $(-1, 1)$, we can use Lemma~\ref{lem:operator-convex} to show that $F$ has the following integral representation
    \begin{equation*}
        F(t) = F(0) + F'(0)t + \frac{1}{2}F''(0) \int_{-1}^1 \frac{t^2}{1-st} \, d\mu(s), \quad \forall t\in(-1, 1),
    \end{equation*}
    for a unique Borel probability measure $\mu$ on $[-1, 1]$. Let us define the integrand $\xi_s:(-1, 1)\rightarrow\mathbb{R}$ by
    \begin{equation*}
        \xi_s(t) = \frac{t^2}{1-st},
    \end{equation*}
    for $s\in[-1, 1]$. A straightforward computation shows that
    \begin{equation}\label{eqn:xi-derivatives}
        \xi_s'(t) = \frac{2t-st^2}{(1-st)^2}, \qquad \xi_s''(t) = \frac{2}{(1-st)^3}, \qquad \xi_s'''(t) = \frac{6s}{(1-st)^4},
    \end{equation}
    and therefore
    \begin{equation}\label{eqn:xi-compatibility}
        \xi_s'''(0) = 6s \geq -6 = -3\xi_s''(0),
    \end{equation}
     for all $s\in[-1, 1]$. Next, we recognize that
     \begin{align}\label{eqn:integral-derivatives}
         \inp{z}{\mathsf{D}^k f(x)[h, \ldots, h]} = F^{(k)}(0) &= \frac{1}{2} F''(0) \frac{d^k}{dt^k} \biggl|_{t=0}\,\int_{-1}^1 \xi_s(t) \, d\mu(s) \nonumber\\
         &= \frac{1}{2} F''(0) \int_{-1}^1 \xi_s^{(k)}(0) \, d\mu(s),
     \end{align}
     for $k=2$ and $k=3$. We can exchange the order of differentiation and integration using a similar argument to~\cite[Theorem A.3]{fawzi2023optimal}, as follows. Using the dominated convergence theorem (see, e.g.,~\cite[Corollary 5.9]{bartle2014elements}), it suffices to show for $k=1$, $2$ and $3$ that there are constants $\varepsilon>0$ and $C_k>0$ such that $\abs{\xi_s^{(k)}(t)} \leq C_k$ for all $(t, s)\in[-\varepsilon, \varepsilon]\times[-1, 1]$. Using the expressions~\eqref{eqn:xi-derivatives}, we can concretely set 
     \begin{equation*}
        C_1 = \frac{2\varepsilon+\varepsilon^2}{(1-\varepsilon)^2}, \qquad C_2=\frac{2}{(1-\varepsilon)^3}, \qquad C_3=\frac{6}{(1-\varepsilon)^4},
    \end{equation*}
     for any $0<\varepsilon<1$. This satisfies the assumptions required for us to apply the dominated convergence theorem, and thus we can repeatedly swap the order of differentiation and integration to obtain~\eqref{eqn:integral-derivatives}.
     
     Finally, noting that $F''(0)\leq0$ as $F$ is concave, we combine~\eqref{eqn:xi-compatibility} and~\eqref{eqn:integral-derivatives} to show that
     \begin{equation*}
         \inp{z}{\mathsf{D}^3f(x)[h, h, h]} \leq -3\inp{z}{\mathsf{D}^2f(x)[h, h]}.
     \end{equation*}
    As this holds for all $z\in\mathcal{K}_*$, it follows that
    \begin{equation*}
        \mathsf{D}^3f(x)[h, h, h] \preceq_\mathcal{K} -3 \mathsf{D}^2f(x)[h, h].
    \end{equation*}
     This is true for all $x\in\domain f$ and $h\in\mathbb{V}$ such that $x \pm h \in \closure\domain f$, which allows us to obtain the desired result by appealing to the definition of compatibility.
\end{proof}

Note that Theorem~\ref{thm:simplified}, which we introduced earlier in Section~\ref{sec:intro}, is a simple corollary of this result.

\begin{proof}[Proof of Theorem~\ref{thm:simplified}]
    This follows directly from Theorem~\ref{thm:compatibility-operator-concave} and Lemma~\ref{lem:compatibility-to-barrier}.
\end{proof}

\section{Proof of Theorems~\ref{thm:renyi-barrier} and~\ref{thm:direct-renyi-barrier}}\label{sec:main-proof}

To prove these results, we will make use of the following compatibility results. Item (i) of Lemma~\ref{lem:renyi-compatibility} is established in Section~\ref{sec:lem:renyi-compatibility-i}. Item (ii) of Lemma~\ref{lem:renyi-compatibility} is established in Section~\ref{sec:lem:renyi-compatibility-ii}. Item (iii) of Lemma~\ref{lem:renyi-compatibility} is established in Section~\ref{sec:lem:renyi-compatibility-iii}. The key to proving these compatibility results will be by appealing to Theorem~\ref{thm:compatibility-operator-concave}.

\begin{lem}\label{lem:renyi-compatibility}
    For any positive integer $n$, the following compatibility results hold:
    \begin{enumerate}[label=(\roman*), ref=\ref{lem:renyi-compatibility}(\roman*)]
        \item For $\alpha\in[\frac{1}{2}, 1]$, the function $\Psi_\alpha$ is $(\mathbb{R}_+, 1)$-compatible with the domain $\mathbb{H}^n_{+}\times\mathbb{H}^n_{+}$. \label{lem:renyi-compatibility-i}
        \item For $\alpha\in[1, 2]$, the function $-\Psi_\alpha$ is $(\mathbb{R}_+, 1)$-compatible with the domain $\mathbb{H}^n_{+}\times\mathbb{H}^n_{+}$. \label{lem:renyi-compatibility-ii}
        \item For $\alpha\in[\frac{1}{2}, 1)$, the function $-\bm{D}_\alpha$ is $(\mathbb{R}_+, 1)$-compatible with the domain $\mathbb{R}_+\times\mathbb{H}^n_{+}\times\mathbb{H}^n_{+}$. \label{lem:renyi-compatibility-iii}
    \end{enumerate}
\end{lem}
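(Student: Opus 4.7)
The overarching strategy for all three items is to apply Theorem~\ref{thm:compatibility-operator-concave} with $\mathcal{K}=\mathbb{R}_+$, which is self-dual. Since the pairing with any $z\in\mathcal{K}_*=\mathbb{R}_+$ is simply the scalar product $z\cdot f(x+th)$, and multiplication by a nonnegative scalar preserves operator concavity, the hypothesis of Theorem~\ref{thm:compatibility-operator-concave} reduces in each case to showing that the scalar function $t\mapsto f(x+th)$ is operator concave on $(-1,1)$ along any line segment whose closure lies in the domain, where $f$ is $\Psi_\alpha$ for (i), $-\Psi_\alpha$ for (ii), or $-\bm{D}_\alpha$ for (iii). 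The compatibility constant is then automatically $\beta=1$.

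For parts (i) and (ii), I would exploit the Loewner integral representations of the power function $x\mapsto x^\alpha$: for $\alpha\in(0,1)$ this writes $x^\alpha$ as a nonnegative mixture of the operator monotone resolvents $x\mapsto x/(s+x)$, and for $\alpha\in(1,2)$ as a nonnegative mixture of the operator convex functions $x\mapsto x^2/(s+x)$. Substituting either representation into $\Psi_\alpha(X,Y)=\tr[(Y^{(1-\alpha)/(2\alpha)}XY^{(1-\alpha)/(2\alpha)})^\alpha]$ expresses $\Psi_\alpha(X,Y)$ as an integral in $s$ of simpler trace functions $\phi_s(X,Y)$. The task reduces to verifying, for each fixed $s$, that $t\mapsto\phi_s(X+tH,Y+tK)$ is operator concave (resp.\ convex) on $(-1,1)$. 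I expect $\phi_s$ to be identifiable as a concrete noncommutative perspective-type trace function, so that the compatibility and operator convexity results from~\cite{faybusovich2017matrix,coey2023conic,fawzi2023optimal} may be invoked along the one-parameter restriction. Interchanging the $s$-integral with $t$-derivatives, justified by dominated convergence exactly as in the proof of Theorem~\ref{thm:compatibility-operator-concave}, then transfers the property back to $\Psi_\alpha$.

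Part (iii) I would reduce to part (i) by exploiting the positive degree-one homogeneity of $\Psi_\alpha$ to rewrite $-\bm{D}_\alpha(u;X,Y) = \frac{1}{1-\alpha}\,u\log(\Psi_\alpha(X,Y)/u)$. Along a line $(u+tr,X+tH,Y+tK)$, the inner argument $\Psi_\alpha(X+tH,Y+tK)$ is operator concave in $t$ by part (i), while $u+tr$ is affine. The map $(a,b)\mapsto b\log(a/b)$ is the scalar perspective of $\log$, and I would use the integral representation $\log x=\int_0^\infty\bigl(\frac{1}{1+s}-\frac{1}{x+s}\bigr)\,ds$ to split $-\bm{D}_\alpha$ into a mixture of resolvent-style terms amenable to the same operator-concavity-along-lines analysis.

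The principal obstacle is the strengthening step: knowing merely that $\Psi_\alpha$ is jointly concave or convex (Frank--Lieb, Hiai) does not yield operator concavity/convexity along lines, which is strictly stronger. The key technical work in (i) and (ii) is therefore verifying the Pick-type structure of the fiberwise integrands $\phi_s$. The range $\alpha\in(1,2)$ in part (ii) is likely the most delicate, since $x\mapsto x^\alpha$ is operator convex but not operator monotone there, and the corresponding integral representation is somewhat less standard than in the $\alpha\in(0,1)$ case.
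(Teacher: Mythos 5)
Your top-level reduction is right: with $\mathcal{K}=\mathbb{R}_+$ self-dual, Theorem~\ref{thm:compatibility-operator-concave} does collapse to checking that $t\mapsto f(x+th)$ is operator concave on $(-1,1)$ for each admissible line, and the compatibility constant comes out automatically as $\beta=1$. This is exactly the framework the paper uses. The difficulty is in your execution of parts (i) and (ii).

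\textbf{Parts (i) and (ii): the integral decomposition of $x\mapsto x^\alpha$ does not close.} Substituting the Loewner representation of the outer power into $\Psi_\alpha$ produces integrands of the form $\phi_s(X,Y)=\tr[Z(Z+s\mathbb{I})^{-1}]$ with $Z=Y^{(1-\alpha)/(2\alpha)}XY^{(1-\alpha)/(2\alpha)}$. Along a line $(X+tH,Y+tV)$, the matrix $Z(t)=(Y+tV)^{(1-\alpha)/(2\alpha)}(X+tH)(Y+tV)^{(1-\alpha)/(2\alpha)}$ is \emph{not} affine in $t$ because the inner power $(Y+tV)^{(1-\alpha)/(2\alpha)}$ is nonlinear, so $\phi_s(X+tH,Y+tV)$ is not a noncommutative perspective along the line and none of the compatibility results you cite from~\cite{faybusovich2017matrix,coey2023conic,fawzi2023optimal} apply directly. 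In other words, the outer-power decomposition does nothing to tame the inner $Y$-power, which is the real source of difficulty. The paper takes two quite different routes. For $\alpha\in[1/2,1]$ it invokes Hiai's Epstein-style complex analysis: it shows (Lemma~\ref{lem:f-is-pick}) that the transpose $\hat{F}(t)=tF(1/t)$ of the line restriction $F(t)=\Psi_\alpha(X+tH,Y+tV)$ extends analytically to a Pick function, hence is operator monotone by Loewner's theorem; Lemma~\ref{lem:transpose-is-concave} then yields operator concavity of $F$ on $(0,1)$, and a Hansen--Tomiyama localization lemma (Lemma~\ref{cor:localization}) glues $(-1,0)$ and $(0,1)$ together at $t=0$. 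For $\alpha\in[1,2]$ it proves a general free-function theorem (Theorem~\ref{thm:nc-function}): any family $f_n$ respecting direct sums and simultaneous unitaries that is $\mathbb{H}^n_+$-concave for \emph{all} sizes $n$ is automatically operator concave along lines. It then represents $-\Psi_\alpha$ as a positive linear image of a composed noncommutative perspective $P_{g,h}$ with $g(x)=-x^{1-\alpha}$, $h(x)=x^{1/\alpha}$, via the tensoring identity $\Phi(P_{g,h}(X\otimes\mathbb{I},\,Y\otimes\mathbb{I},\,\mathbb{I}\otimes\bar{Y}))=-\Psi_\alpha(X,Y)$. This tensoring trick is precisely what linearizes the troublesome $Y$-dependence, and it is absent from your argument.

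You do correctly flag the central obstacle --- that joint matrix concavity of $\Psi_\alpha$ is a priori weaker than operator concavity along lines --- but your proposed repair (fiberwise resolvent decomposition) does not actually bridge that gap. The paper's insight that resolves it is Theorem~\ref{thm:nc-function}: for free functions (i.e.\ those compatible with direct sums and unitaries), matrix concavity at \emph{all sizes} does in fact imply operator concavity along lines. This is the key lemma your argument is missing, and without it neither (i) nor (ii) goes through.

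Part (iii) is essentially the paper's route: reduce to the operator concavity of $F(t)$ from part (i) via homogeneity and the perspective $P_{\log}(u+ts,\,F(t))$. The paper finishes directly with a perspective-composition lemma (Lemma~\ref{lem:perspective-composition}), using operator monotonicity and operator concavity of $\log$; your proposal to additionally split $\log$ into a resolvent integral would also work but is an unnecessary extra step.
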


Self-concordance of the barrier functions in Theorems~\ref{thm:renyi-barrier} and~\ref{thm:direct-renyi-barrier} are then a direct consequence of Lemma~\ref{lem:compatibility-to-barrier} and Lemma~\ref{lem:renyi-compatibility}. Optimality of the barrier parameters in Theorem~\ref{thm:renyi-barrier} follow directly from~\cite[Corollary 3.13]{fawzi2023optimal}, while optimality of the barrier parameter in Theorem~\ref{thm:direct-renyi-barrier} is proven in Appendix~\ref{appdx:barrier-parameter}.

\subsection{Proof of Lemma~\ref{lem:renyi-compatibility-i}}\label{sec:lem:renyi-compatibility-i}

To prove $(\mathbb{R}_+, 1)$-compatibility of $\Psi_\alpha$ with the domain $\mathbb{H}^n_+\times\mathbb{H}^n_+$ for $\alpha\in[\frac{1}{2},1]$, we follow a similar complex analysis approach as~\cite{hiai2001concavity,hiai2013concavity,hiai2016concavity} which was used to prove concavity of $\Psi_\alpha$ for the same range of $\alpha$. By appealing to Theorem~\ref{thm:compatibility-operator-concave}, the desired compatibility result follows by showing that
\begin{align}\label{eqn:direction-of-psi}
    F(t) \coloneqq \Psi_\alpha(X+tH, Y+tV) = \tr\left[ \left((Y + tV)^\frac{1-\alpha}{2\alpha} (X + tH) (Y + tV)^\frac{1-\alpha}{2\alpha} \right)^\alpha \right],
\end{align}
is operator concave on the interval $(-1, 1)$ for all $X,Y\in\mathbb{H}^n_{++}$ and $H,V\in\mathbb{H}^n$ satisfying $X \pm H \succeq 0$ and $Y \pm V \succeq 0$. To do this, we first show that the transpose of $F$, i.e., $\hat{F}(t)\coloneqq t F(1/t)$, is operator monotone by using the following lemma. 

\begin{lem}\label{lem:f-is-pick}
    Let $X,Y\in\mathbb{H}^n_{++}$ and $H,V\in\mathbb{H}^n$ be any Hermitian matrices satisfying $X \pm H \succeq 0$ and $Y \pm V \succeq 0$, and let $\alpha\in[\frac{1}{2}, 1]$. Consider the function
    \begin{equation*}
        \hat{F}(t) \coloneqq \tr\left[ \left((tY + V)^\frac{1-\alpha}{2\alpha} (tX + H) (tY + V)^\frac{1-\alpha}{2\alpha} \right)^\alpha \right],
    \end{equation*}
    defined on the interval $(1, \infty)$. Then $\hat{F}$ has an analytic continuation to the upper half-plane $\mathbb{C}^+\coloneqq\{ z\in\mathbb{C} : \im z > 0 \}$ that maps $\mathbb{C}^+$ into $\mathbb{C}^+$.
\end{lem}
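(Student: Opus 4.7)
The plan is to follow the Epstein/Hiai complex-analytic approach: extend $\hat{F}$ off the real interval $(1,\infty)$ into $\mathbb{C}^+$ via functional calculus, show that the spectrum of the inner matrix $B(z)^p A(z) B(z)^p$ (with $p = (1-\alpha)/(2\alpha) \in [0,\tfrac{1}{2}]$) lies in a sector of width $(2p+1)\pi$, and then use the identity $\alpha(2p+1) = 1$ to rotate this sector down to $\mathbb{C}^+$ after applying the outer $\alpha$-power. First, I would verify that $\hat{F}$ is well-defined and real on $(1,\infty)$. The hypotheses $X \pm H \succeq 0$ and $Y \pm V \succeq 0$ give $-Y \preceq V \preceq Y$ and $-X \preceq H \preceq X$, so for $t > 1$ one has $tY + V \succeq (t-1)Y \succ 0$ and $tX + H \succeq (t-1)X \succ 0$, making $\hat{F}(t)$ a trace of a positive-definite operator.

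For the analytic continuation, set $A(z) = zX + H$ and $B(z) = zY + V$, which are entire matrix-valued. For $z \in \mathbb{C}^+$ we have $\im A(z) = (\im z) X \succ 0$ and $\im B(z) = (\im z) Y \succ 0$. A standard observation is that any matrix $M$ with $\im M \succ 0$ has spectrum in $\mathbb{C}^+$: if $Mv = \lambda v$ with $\|v\|=1$ then $\im\lambda = v^*(\im M)v > 0$. Hence the spectrum of $B(z)$ avoids $(-\infty,0]$, so $B(z)^p$ is defined by the principal branch of $w \mapsto w^p$ and is holomorphic in $z$, which makes $C(z) \coloneqq B(z)^p A(z) B(z)^p$ a holomorphic matrix-valued function on $\mathbb{C}^+$.

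The crux is to show that $\sigma(C(z))$ is contained in the open sector $S_{2p+1} \coloneqq \{w \in \mathbb{C} \setminus \{0\} : 0 < \arg w < (2p+1)\pi\}$ (using an argument measured in $(0, 2\pi)$, which is consistent because $2p + 1 \leq 2$). In the scalar case this is immediate: $B(z)^p$ contributes argument in $(0, p\pi)$, $A(z)$ contributes argument in $(0, \pi)$, and their combined argument lies in $(0, (2p+1)\pi)$. In the non-commutative matrix case, naive argument addition fails, and I would establish the sector containment by continuity/homotopy. At any base point $z_0 \in (1, \infty)$, $C(z_0)$ is positive-definite Hermitian, so $\sigma(C(z_0)) \subset (0, \infty) \subset S_{2p+1}$. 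Eigenvalues depend continuously on $z$ along paths in $\mathbb{C}^+$, and no eigenvalue can reach $0$ because $A(z)$ and $B(z)^p$ are both invertible for $z \in \mathbb{C}^+$ (their spectra lie in $\mathbb{C}^+$ which avoids $0$). So an eigenvalue could leave $S_{2p+1}$ only by crossing the ray $\arg w = (2p+1)\pi$, which one excludes via the boundary analysis of the Epstein-type integral representation $w^p = c_p + (\sin p\pi)/\pi \int_0^\infty s^{p-1} w(w + s)^{-1}\, ds$, reducing spectral tracking to imaginary-part inequalities on resolvents.

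With the sector containment in hand, I pick the branch of $w \mapsto w^\alpha$ holomorphic on $S_{2p+1}$ (consistent since $\alpha(2p+1) = 1 \leq 2$) and define $C(z)^\alpha$ by functional calculus. Its spectrum lies in $S_{\alpha(2p+1)} = S_1 = \mathbb{C}^+$, so every eigenvalue has strictly positive imaginary part. Therefore
\[
\im \hat{F}(z) \;=\; \im \tr[C(z)^\alpha] \;=\; \sum_j \im \lambda_j\bigl(C(z)^\alpha\bigr) \;>\; 0,
\]
and $\hat{F}$ is holomorphic on $\mathbb{C}^+$ as a composition of analytic operations, verifying the Pick property. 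The main obstacle is plainly the sector-containment step: controlling $\sigma(C(z))$ when $C(z)$ is neither Hermitian nor has positive imaginary part in general. This is precisely the technical heart of Epstein's method, and I expect the cleanest write-up to proceed via the resolvent integral representation of $w^p$ to reduce everything to imaginary-part estimates on $(sI + B(z))^{-1}$ that interact cleanly with $A(z)$.
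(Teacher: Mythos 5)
Your high-level plan is the Epstein/Hiai complex-analytic method, which is indeed what the paper relies on: the paper does not prove this lemma from scratch but cites it as an intermediate step in the proof of Theorem~2.1 of Hiai (2013), with the appropriate parameter substitution. So you are on the right track; the opening (verifying $\hat F$ is well-defined on $(1,\infty)$, extending $A(z)=zX+H$ and $B(z)=zY+V$ to $\mathbb{C}^+$, observing $\im A(z),\im B(z)\succ0$, and the counting identity $\alpha(2p+1)=1$) matches the cited proof.

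The gap is the sector-containment step, which is the technical heart and which your sketch does not settle. Two specific problems with the continuity/homotopy argument as written. First, the base case is off: $\sigma(C(z_0))\subset(0,\infty)$ for $z_0\in(1,\infty)$ lies on the \emph{boundary} of your open sector $S_{2p+1}=\{0<\arg w<(2p+1)\pi\}$, not inside it, so the homotopy does not start from a point where the claimed containment holds. Second, you assert an eigenvalue ``could leave $S_{2p+1}$ only by crossing the ray $\arg w=(2p+1)\pi$,'' silently ruling out the other boundary ray $\arg w=0$; nothing in the argument as given prevents an eigenvalue from dipping into the lower half-plane. The invocation of ``boundary analysis of the Epstein-type integral representation'' is a placeholder rather than an argument.

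The clean route, and the one Hiai actually takes, requires no homotopy at all: it is a direct Rayleigh-quotient (numerical-range) computation valid pointwise for each $z\in\mathbb{C}^+$. If $C(z)v=\lambda v$ with $v\neq0$, set $w=B(z)^p v\neq0$; then $B(z)^{2p}A(z)w=\lambda w$, so pairing with $A(z)w$ gives
\[
\lambda \;=\; \frac{(A(z)w)^* B(z)^{2p} (A(z)w)}{w^* A(z)^* w}.
\]
Since $W(B(z))\subset\{0<\arg<\pi\}$, the standard Kato-type sector lemma for principal fractional powers gives $W(B(z)^{2p})\subset\{0<\arg<2p\pi\}$, so the numerator has argument in $(0,2p\pi)$ (or equals a positive real when $p=0$); and $w^*A(z)^*w=\overline{w^*A(z)w}$ with $W(A(z))\subset\{0<\arg<\pi\}$ puts the denominator's argument in $(-\pi,0)$. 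Hence $\arg\lambda\in(0,(2p+1)\pi)$ strictly, for every $z\in\mathbb{C}^+$, with no continuity argument needed. From there your final step (apply $w\mapsto w^\alpha$ on the sector, use $\alpha(2p+1)=1$, take the trace) goes through. I would replace the homotopy paragraph with this computation, or simply cite Hiai (2013, Theorem~2.1) as the paper does.
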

\begin{proof}
    This is an intermediate result in the proof of~\cite[Theorem 2.1]{hiai2013concavity}, where, in the notation of~\cite{hiai2013concavity}, $\Phi$ and $\Psi$ are the identity operators, $p=(1-\alpha)/\alpha$, $q=1$, and $s=\alpha$. 
\end{proof}

Given this result, Loewner's theorem implies that $\hat{F}$ is also operator monotone on $(1, \infty)$. We then use the following lemma to show that $F$ is operator concave on $(0, 1)$.

\begin{lem}\label{lem:transpose-is-concave}
    Let $f$ be an operator monotone function on $(\gamma, \infty)$ for some $\gamma>0$, and let $\hat{f}(x)=x f(1/x)$ be the transpose of $f$. Then $\hat{f}$ is operator concave on $(0, 1/\gamma)$.
\end{lem}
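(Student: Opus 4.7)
The plan is to use Loewner's integral representation from Lemma~\ref{lem:loewner} to put $f$ in canonical form, then substitute $x \mapsto 1/x$ and multiply by $x$ so that $\hat f$ becomes an explicit integral whose kernel can be checked to be operator convex by elementary means.

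Concretely, I would first invoke part~(iii) of Lemma~\ref{lem:loewner} to write
\[
    f(x) \;=\; a + bx + \int_{-\infty}^{\gamma} \left(\frac{1}{s-x} - \frac{s}{s^2+1}\right) d\mu(s), \quad x \in (\gamma, \infty),
\]
for some $a \in \mathbb{R}$, $b \geq 0$, and positive finite Borel measure $\mu$ supported on $(-\infty, \gamma]$. Substituting $1/x$ and multiplying by $x$, and using the identity $x/(s - 1/x) = -x^2/(1-sx)$, this rearranges to
\[
    \hat f(x) \;=\; b + \left(a - \int \frac{s}{s^2+1}\,d\mu(s)\right) x \;-\; \int_{-\infty}^{\gamma} \frac{x^2}{1 - sx}\, d\mu(s),
\]
valid for $x \in (0, 1/\gamma)$, on which range $1 - sx > 0$ whenever $s \leq \gamma$ (either $s \leq 0$, making the inequality immediate since $x > 0$, or $0 < s \leq \gamma$, in which case $sx < \gamma \cdot (1/\gamma) = 1$).

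The key step is then to verify that the kernel $x \mapsto x^2/(1 - sx)$ is operator convex on $(0, 1/\gamma)$ for every $s \leq \gamma$. For $s = 0$ this is just $x^2$, which is operator convex. For $s \neq 0$ I would use the partial fraction identity
\[
    \frac{x^2}{1 - sx} \;=\; -\frac{x}{s} - \frac{1}{s^2} + \frac{1}{s^2\,(1 - sx)},
\]
which isolates a single non-affine term $1/(s^2(1-sx))$. Since $y \mapsto 1/y$ is operator convex on $(0, \infty)$ (equivalently, $y \mapsto -1/y$ is operator concave, as recorded in Section~\ref{subsec:operator}), and the map $x \mapsto 1 - sx$ is affine with image in $(0, \infty)$ on our interval, composition with this affine map preserves operator convexity.

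Finally, since operator convexity is preserved under integration against a positive measure (applying the defining inequality pointwise in $s$ to Hermitian matrices $X, Y$ with spectra in $(0, 1/\gamma)$ and integrating, the Hermitian-matrix-valued integrals being well defined because the kernel is uniformly bounded on compact subsets of $(0, 1/\gamma)$), it follows that $\int x^2/(1-sx)\, d\mu(s)$ is operator convex, its negative is operator concave, and adding the affine terms preserves this, yielding operator concavity of $\hat f$ on $(0, 1/\gamma)$. The only mildly delicate point I expect is the bookkeeping needed to justify interchanging the matrix inequality with integration against $\mu$, but this is routine given the explicit bounds on the kernel and has been handled similarly elsewhere in the paper (e.g., the dominated convergence argument in the proof of Theorem~\ref{thm:compatibility-operator-concave}).
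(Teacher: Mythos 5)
Your proof is correct and follows essentially the same route as the paper: invoke Loewner's integral representation, substitute $x\mapsto1/x$ and multiply by $x$ to get an integral formula for $\hat f$ over $(-\infty,\gamma]$, and reduce the claim to operator concavity of the kernel via a partial-fraction decomposition whose only non-affine piece is an affine shift of $y\mapsto 1/y$. The only cosmetic difference is that you check operator convexity of $x^2/(1-sx)$ while the paper checks operator concavity of $x^2/(sx-1)$, which are the same statement up to sign.
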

\begin{proof}
    See Appendix~\ref{appdx:proof-transpose-is-concave}.
\end{proof}

By swapping out $H, V$ for $-H, -V$, an identical argument shows that $F$ is also operator concave on $(-1, 0)$. Finally, we use the following lemma to extend operator concavity of $F$ to $t=0$. 

\begin{lem}\label{cor:localization}
    Let $a,b,c$ be real numbers satisfying $a<b<c$, and let $f:(a,c)\rightarrow\mathbb{R}$ be a $C^{2n}$ function. If $f$ is $\mathbb{H}^n_+$-convex on $(a, b)$ and $\mathbb{H}^n_+$-convex on $(b, c)$, then $f$ is also $\mathbb{H}^n_+$-convex $(a, c)$.
\end{lem}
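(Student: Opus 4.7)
The plan is to invoke a classical local (pointwise) characterization of $n$-matrix convexity for sufficiently smooth functions, and then use the $C^{2n}$ regularity to extend this characterization across the boundary point $b$.

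Specifically, by a theorem of Kraus (see, e.g., Hiai--Petz, \emph{Introduction to Matrix Analysis and Applications}, or Hiai, \emph{Matrix Analysis}, Theorem 2.7.6), a $C^{2n-2}$ function $g$ on an open interval $I \subset \mathbb{R}$ is $\mathbb{H}^n_+$-convex on $I$ if and only if a certain Hankel-type ``Kraus matrix'' $K_n(g;t)$, whose entries are explicit expressions in the derivatives $g^{(k)}(t)$ for $1 \le k \le 2n-2$, is positive semidefinite at every $t \in I$. The exact form of $K_n$ varies across references, but only two features are relevant here: (i) $K_n(g;t) \succeq 0$ at every $t \in I$ is a pointwise condition equivalent to $\mathbb{H}^n_+$-convexity, and (ii) the entries of $K_n(f;t)$ depend continuously on $t$ whenever $f \in C^{2n}(I)$.

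Given this characterization, the proof of the lemma is essentially immediate. The hypothesis that $f$ is $\mathbb{H}^n_+$-convex on $(a,b)$ and on $(b,c)$ yields $K_n(f;t) \succeq 0$ for every $t \in (a,b) \cup (b,c)$. Since $f \in C^{2n}(a,c)$, the map $t \mapsto K_n(f;t)$ is continuous on the whole interval $(a,c)$. Taking the limit as $t \to b$ from either side and using the fact that $\mathbb{H}^n_+$ is closed, we obtain $K_n(f;b) \succeq 0$ as well. Hence $K_n(f;t) \succeq 0$ for all $t \in (a,c)$, and the Kraus criterion gives $\mathbb{H}^n_+$-convexity of $f$ on all of $(a,c)$.

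The main obstacle is citing the Kraus characterization in exactly the form that matches the authors' conventions, since different references state it with slightly different normalizations of the Hankel entries; some care is required to ensure that the hypothesis $f \in C^{2n}$ supplies enough derivatives for the chosen convention. A direct (non-Kraus) approach via approximation would try to reduce to pairs $(X,Y)$ whose spectra, together with the spectra of their convex combinations, avoid $b$; but this is genuinely delicate, because a convex combination $\lambda X + (1-\lambda) Y$ can have eigenvalues crossing $b$ even when the spectra of $X$ and $Y$ do not, so any such argument would have to control the spectrum of the midpoint operator as well. The Kraus-based local argument sidesteps this issue entirely by reducing the whole statement to continuity of $K_n(f;\cdot)$ across the single point $b$.
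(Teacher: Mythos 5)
Your proposal takes essentially the same approach as the paper: both reduce $n$-matrix convexity to the pointwise positive semidefiniteness of a Hankel-type matrix of derivatives and then use continuity of its entries (from the $C^{2n}$ hypothesis) to pass across the point $b$. The paper invokes precisely the Hansen--Tomiyama criterion (entries $f^{(i+j)}(x)/(i+j)!$ for $1\le i,j\le n$, hence derivatives up to order $2n$, matching the $C^{2n}$ hypothesis), which is the ``correct convention'' you rightly flag as needing care; otherwise the argument is identical.
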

\begin{proof}
    See Appendix~\ref{appdx:proof-localization}.
\end{proof}

As $F$ is $C^\infty$, we can apply the above lemma for all positive integers $n$, and conclude that $F$ is operator concave on the entire domain $(-1, 1)$. Using Theorem~\ref{thm:compatibility-operator-concave}, we conclude that $\Psi_\alpha$ is $(\mathbb{H}^n_+, 1)$-compatible with the domain $\mathbb{H}^n_+\times\mathbb{H}^n_+$, which concludes the proof.

\subsection{Proof of Lemma~\ref{lem:renyi-compatibility-ii}}\label{sec:lem:renyi-compatibility-ii}

We first focus on proving a more general result which we will then use to prove Lemma~\ref{lem:renyi-compatibility-ii}. We will use $\oplus$ to denote the direct sum of two matrices, and $\otimes$ to denote the Kronecker product of two matrices, i.e., for any two matrices $X\in\mathbb{C}^{n\times m}$ and $Y\in\mathbb{C}^{p\times q}$, we have
\begin{equation*}
    X\oplus Y = \begin{bmatrix}
        X & 0 \\ 0 & Y
    \end{bmatrix},
\end{equation*}
and
\begin{equation*}
    X\otimes Y = \begin{bmatrix}
        x_{11} Y & \ldots & x_{1m} Y \\
        \vdots & \ddots & \vdots \\
        x_{n1} Y & \ldots & x_{nm} Y
    \end{bmatrix},
\end{equation*}
where $x_{ij}$ denotes the $(i,j)$-th entry of $X$. An important property of the Kronecker product is that for any two Hermitian matrices $X,Y\in\mathbb{H}^n$ with diagonalizations $X=U_1\Lambda_1U_1^*$ and $Y=U_2\Lambda_2U_2^*$, where $U_1,U_2$ are unitary matrices and $\Lambda_1,\Lambda_2$ are real diagonal matrices, that $X\otimes Y$ is also Hermitian with the diagonalization
\begin{equation*}
    X\otimes Y = (U_1\otimes U_2)(\Lambda_1\otimes \Lambda_2)(U_1 \otimes U_2)^*.
\end{equation*}
Using this, it is also easy to verify for any $p\in\mathbb{R}$ that $(X \otimes Y)^p=X^p\otimes Y^p$. 

We are now ready to present the following general theorem showing that certain families of multivariate matrix concave functions are operator concave along lines, and hence are compatible with respect to their domains.

\begin{thm}\label{thm:nc-function}
    Let $f_n:(\mathbb{H}^n_{++})^d\rightarrow\mathbb{H}^n$ be a family of functions indexed by a positive integer $n$.
    Let $f_n$ satisfy the following two properties:
    \begin{itemize}
        \item Respects direct sums, i.e.,
        \begin{equation*}
            f_{n+m}(X_1\oplus Y_1, \ldots, X_d\oplus Y_d) = f_n(X_1, \ldots, X_d) \oplus f_m(Y_1, \ldots, Y_d),
        \end{equation*}
        for all $d$-tuples of $n\times n$ Hermitian matrices $X_i$ and $m\times m$ Hermitian matrices $Y_i$, and positive integers $n$ and $m$.
        \item Respects simultaneous unitaries, i.e.,
        \begin{equation*}
            f_n(UX_1 U^*, \ldots, UX_d U^*) = U f_n(X_1, \ldots, X_d) U^*,
        \end{equation*}
        for all $n\times n$ unitaries $U$, $d$-tuples of $n\times n$ Hermitian matrices $X_i$, and positive integers $n$.
    \end{itemize}
    If $f_n$ is $\mathbb{H}^n_+$-concave for all positive integers $n$, then $f_n$ is $(\mathbb{H}^n_+, 1)$-compatible with the domain $(\mathbb{H}^n_{++})^d$ for all positive integers $n$.
\end{thm}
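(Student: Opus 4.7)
The plan is to invoke Theorem~\ref{thm:compatibility-operator-concave} for the self-dual cone $\mathcal{K} = \mathcal{K}_* = \mathbb{H}^n_+$. Concretely, for every $z \in \mathbb{H}^n_+$, every $X = (X_1,\ldots,X_d) \in (\mathbb{H}^n_{++})^d$, and every $H = (H_1,\ldots,H_d)$ with $X_i \pm H_i \succeq 0$ for each $i$, I would show that the scalar function
\begin{equation*}
    F(t) = \inp{z}{f_n(X_1 + tH_1, \ldots, X_d + tH_d)}
\end{equation*}
is operator concave on $(-1,1)$; the claimed compatibility then follows directly from Theorem~\ref{thm:compatibility-operator-concave}.

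The core idea is an ampliation argument. For any positive integer $m$ and any $T \in \mathbb{H}^m$ with spectrum in $(-1,1)$, I would introduce the $nm \times nm$ Hermitian matrices $\tilde X_i(T) \coloneqq I_m \otimes X_i + T \otimes H_i$ and set $g(T) \coloneqq f_{nm}(\tilde X_1(T), \ldots, \tilde X_d(T))$. Diagonalizing $T = U\Lambda U^*$ with eigenvalues $\mu_k \in (-1,1)$ shows that each $\tilde X_i(T)$ is unitarily conjugate to the block diagonal matrix $\bigoplus_k (X_i + \mu_k H_i)$; writing $X_i + \mu_k H_i = \tfrac{1+\mu_k}{2}(X_i + H_i) + \tfrac{1-\mu_k}{2}(X_i - H_i)$ and combining with $X_i \succ 0$ shows each block is strictly positive definite, so $g(T)$ is well defined on the ampliation. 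Since $T \mapsto \tilde X_i(T)$ is affine and $f_{nm}$ is $\mathbb{H}^{nm}_+$-concave, $g$ is $\mathbb{H}^{nm}_+$-concave in $T$.

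To relate $g$ back to $F$, I would apply the direct sum and simultaneous unitary hypotheses on $f$ to obtain
\begin{equation*}
    g(T) = (U \otimes I_n) \Bigl[ \sum_k e_k e_k^* \otimes f_n(X + \mu_k H) \Bigr] (U \otimes I_n)^*,
\end{equation*}
and then introduce the linear map $\Phi_z : \mathbb{H}^{nm} \to \mathbb{H}^m$ defined by $\Phi_z(M) = \tr_2[(I_m \otimes z)\, M]$, where $\tr_2$ denotes the partial trace over the second tensor factor. Positivity of $\Phi_z$ follows from the identity $\Phi_z(M) = \tr_2[(I_m \otimes \sqrt{z})\, M\, (I_m \otimes \sqrt{z})]$, and a direct calculation yields $\Phi_z(g(T)) = U F(\Lambda) U^*$, which is precisely the functional calculus extension of $F$ evaluated at $T$. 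Hence the functional calculus extension of $F$ equals $\Phi_z \circ g$, a composition of an $\mathbb{H}^{nm}_+$-concave map with a positive linear map, and is therefore $\mathbb{H}^m_+$-concave in $T$. Since this holds for all $m$ and all $T$ with spectrum in $(-1,1)$, $F$ is operator concave on $(-1,1)$, and Theorem~\ref{thm:compatibility-operator-concave} delivers the desired $(\mathbb{H}^n_+, 1)$-compatibility.

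The step I expect to do the real work is recognizing that scalar operator concavity of $F$ can be lifted from $\mathbb{H}^{nm}_+$-concavity of $f_{nm}$ via the tensor ampliation $I_m \otimes X_i + T \otimes H_i$; once this setup is in place, the direct sum and unitary invariance hypotheses are exactly what is needed to reconstruct $F$ from $g$ via the positive partial-trace map $\Phi_z$, and the standard composition rule for positive linear maps completes the argument. The main bookkeeping to verify is the identity $\Phi_z \circ g = F$ (as a matrix function), which is where the block structure produced by diagonalizing $T$ must be tracked carefully through the ampliation.
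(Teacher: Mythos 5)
Your proposal is correct and follows essentially the same ampliation argument as the paper: reduce via Theorem~\ref{thm:compatibility-operator-concave} to operator concavity of $F$, lift through $T\mapsto f_{nm}(\mathbb{I}\otimes X_i + T\otimes H_i)$, diagonalize $T$ and use the direct-sum and unitary-covariance hypotheses, and recover $F(T)$ by composing with a positive linear block-trace map. The only cosmetic differences are that you express the positive map $\Xi$ as a partial trace $\tr_2[(I_m\otimes z)\,\cdot\,]$ (verifying positivity by conjugation with $I_m\otimes\sqrt{z}$) rather than via the eigendecomposition of $z$, and you spell out the well-definedness of the ampliated arguments, which the paper leaves implicit.
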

\begin{rem}
    Functions satisfying assumptions like those in Theorem~\ref{thm:nc-function} arise naturally in non-commutative or free probability, operator theory, and systems theory. In the language, for instance, of~\cite{helton2011proper}, the disjoint union of all positive definite cones (of all sizes) is an example of a \emph{non-commutative domain}. Moreover, a family of functions defined on a non-commutative domain, that is compatible with direct sums and similarities, is a \emph{free mapping}.
\end{rem}
\begin{proof}
    By appealing to Theorem~\ref{thm:compatibility-operator-concave}, it suffices to show that
    \begin{equation*}
        F(t) \coloneqq \tr[A \cdot f_n(X_1+tH_1,\ldots,X_d+tH_d)],
    \end{equation*}
    is operator concave on $(-1, 1)$ for all $A\in\mathbb{H}^n_{+}$, $X_i\in\mathbb{H}^n_{++}$ and $H_i\in\mathbb{H}^n$ satisfying $X_i\pm H_i\succeq0$ for $i=1,\ldots,d$, and positive integers $n$. To prove this, we will show that the extension of $F$ to matrix arguments is $\mathbb{H}^m_{+}$-concave on the set of $m\times m$ Hermitian matrices with eigenvalues in $(-1, 1)$, for all positive integers $m$. We do this by showing that, for any matrix $T\in \mathbb{H}^m$ with eigenvalues in $(-1, 1)$, the matrix $F(T)$ can be expressed in the form
    \begin{equation}\label{eqn:composition}
        F(T) = \Xi(f_{nm}(\bm{X}_1(T), \ldots, \bm{X}_d(T))),
    \end{equation}
    where $\bm{X}_i(T)$ are affine maps for $i=1,\ldots,d$, and $\Xi$ is a positive linear map. The desired result then follows from $\mathbb{H}^{nm}_+$-concavity of $f_{nm}$. 
    
    We now establish that $F$ has a representation of the form~\eqref{eqn:composition}. Consider a matrix $T\in\mathbb{H}^m$ with eigenvalues in $(-1, 1)$ and diagonalization $T=U\Lambda U^*$ (where $U$ is unitary and $\Lambda$ is diagonal and real). Let us denote $\bm{X}_i(T)=\mathbb{I}\otimes X_i + T\otimes H_i$ and $X_i(\lambda)=X_i+\lambda H_i$ for $i=1,\ldots,d$. Using this notation, note that
    \begin{equation*}
        \bm{X}_i(T)=(U\otimes\mathbb{I})\bm{X}_i(\Lambda)(U^*\otimes\mathbb{I}),
    \end{equation*}
    and additionally that
    \begin{equation*}
        \bm{X}_i(\Lambda) = \begin{bmatrix} X_i(\lambda_1) & & \\
            & \ddots & \\
            & & X_i(\lambda_m)
        \end{bmatrix}.
    \end{equation*}
    Now using the two key properties of $f_n$ outlined in the statement of the theorem, we can show that
    \begin{align*}
        \MoveEqLeft[3] f_{nm}(\bm{X}_1(T), \ldots, \bm{X}_d(T)) \\
        &= (U\otimes\mathbb{I}) f_{nm}(\bm{X}_1(\Lambda), \ldots, \bm{X}_d(\Lambda)) (U^*\otimes\mathbb{I})\\
        &= (U\otimes\mathbb{I}) \begin{bmatrix}
            f_n(X_1(\lambda_1), \ldots, X_d(\lambda_1)) & & \\
            & \ddots & \\
            & & f_n(X_1(\lambda_m), \ldots, X_d(\lambda_m))
        \end{bmatrix} (U^*\otimes\mathbb{I}),
    \end{align*}
    where the first equality uses compatibility with simultaneous unitaries, and the second equality uses compatibility with direct sums. Finally, consider the linear map $\Xi:\mathbb{H}^{nm}\rightarrow\mathbb{H}^m$ defined by
    \begin{equation*}
        \Xi(M) = \sum_{i=1}^n \mu_i (\mathbb{I}\otimes v_i^*) M (\mathbb{I} \otimes v_i),
    \end{equation*}
    where $A$ has the eigendecomposition $A=\sum_{i=1}^n \mu_i v_iv_i^*$. This is the linear map which applies the operation $M_{ij}\mapsto\tr[AM_{ij}]$ to each $n\times n$ block of an $m\times m$ block matrix. Note that this is a positive linear map as $A\in\mathbb{H}^n_+$, which implies that $\mu_i\geq0$ for all $i=1,\ldots,n$. It is also relatively straightforward to confirm that
    \begin{equation*}
        \Xi((U\otimes\mathbb{I})M(U^*\otimes\mathbb{I}))=U\Xi(M)U^*,
    \end{equation*}
    for any unitary matrix $U$. Using these properties of $\Xi$, we can see that
    \begin{equation*}
        \Xi(f_{nm}(\bm{X}_1(T), \ldots, \bm{X}_d(T))) = U \begin{bmatrix}
            F(\lambda_1) & & \\
            & \ddots & \\
            & & F(\lambda_m)
        \end{bmatrix} U^* = UF(\Lambda)U^* = F(T).
    \end{equation*}
    Thus, we have shown that $T\mapsto F(T)$ is the composition between an $\mathbb{H}^{nm}_+$-concave function and a positive linear map from $\mathbb{H}^{nm}$ to $\mathbb{H}^{m}$, and therefore that $T\mapsto F(T)$ is $\mathbb{H}^m_+$-concave on $m\times m$ Hermitian matrices with eigenvalues in $(-1, 1)$. As this is true for any positive integer $m$, we conclude that $F$ is operator concave on $(-1, 1)$, as desired.
\end{proof}

Now to prove $(\mathbb{H}^n_+, 1)$-compatibility of $-\Psi_\alpha$ with the domain $\mathbb{H}^n_+\times\mathbb{H}^n_+$ for $\alpha\in[1,2]$, we use similar ideas as those used in~\cite{muller2013quantum,wilde2014strong} to prove joint convexity of $\Psi_\alpha$ for this range of $\alpha$. We recall that the noncommutative perspective of a function $g:(0, \infty)\rightarrow\mathbb{R}$ is defined as
\begin{equation*}
    P_g(X, Y) \coloneqq X^{\frac{1}{2}} g\!\left(X^{-\frac{1}{2}} Y X^{-\frac{1}{2}} \right) \!X^{\frac{1}{2}},
\end{equation*}
on the domain $\mathbb{H}^n_{++}\times\mathbb{H}^n_{++}$. When $g$ is operator concave, $P_g$ is jointly $\mathbb{H}^n_{+}$-concave for all positive integers $n$~\cite[Theorem 2.2]{ebadian2011perspectives}. Additionally, the noncommutative perspective satisfies the identity $P_g(X,Y)=P_{\hat{g}}(Y,X)$ for all $X,Y\in\mathbb{H}^n_{++}$, where $\hat{g}(x)=xg(1/x)$ is the transpose of $g$~\cite[Lemma 2.1]{hiai2017different}. We also introduce the following composition rule.

\begin{lem}\label{lem:perspective-composition}
    Consider the functions $g:(0,\infty)\rightarrow\mathbb{R}$ and $h:\domain h\rightarrow\mathbb{H}^n_{++}$ where $\domain h$ is a convex set. If $g$ is operator concave and operator monotone, and $h$ is $\mathbb{H}^n_+$-concave, then the function
    \begin{equation*}
        (X,y) \mapsto P_g(X, h(y)),
    \end{equation*}
    defined on $\mathbb{H}^n_{++}\times\domain h$ is jointly $\mathbb{H}^n_+$-concave.
\end{lem}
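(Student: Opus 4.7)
The plan is to prove Lemma~\ref{lem:perspective-composition} by a two-step monotonicity-plus-concavity chaining argument, exploiting the fact that both hypotheses on $g$ are used: operator concavity gives joint concavity of $P_g$, and operator monotonicity gives that $P_g(X,\wc)$ is monotone in its second argument with respect to the Loewner order.

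First I would record the two ingredients I need. By~\cite[Theorem 2.2]{ebadian2011perspectives} (cited just before the lemma), operator concavity of $g$ on $(0,\infty)$ implies that $P_g$ is jointly $\mathbb{H}^n_+$-concave on $\mathbb{H}^n_{++}\times\mathbb{H}^n_{++}$. Next, I would observe that if $g$ is operator monotone on $(0,\infty)$, then for any fixed $X\in\mathbb{H}^n_{++}$ the map $Y\mapsto P_g(X,Y)$ is monotone non-decreasing in the Loewner order. This follows because congruence $Y\mapsto X^{-1/2}YX^{-1/2}$ preserves $\succeq$, operator monotonicity of $g$ preserves $\succeq$, and the final congruence $Z\mapsto X^{1/2}ZX^{1/2}$ preserves $\succeq$.

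Then I would combine these facts as follows. Fix $(X_1,y_1),(X_2,y_2)\in\mathbb{H}^n_{++}\times\domain h$ and $\lambda\in[0,1]$, and set $X=\lambda X_1+(1-\lambda)X_2$ and $y=\lambda y_1+(1-\lambda)y_2$. Since $h$ is $\mathbb{H}^n_+$-concave, $h(y)\succeq \lambda h(y_1)+(1-\lambda)h(y_2)$, and the right-hand side lies in $\mathbb{H}^n_{++}$ because $h$ maps into $\mathbb{H}^n_{++}$. Applying monotonicity of $P_g$ in its second argument then joint concavity of $P_g$ in both arguments gives
\begin{align*}
P_g(X,h(y)) &\succeq P_g\bigl(X,\;\lambda h(y_1)+(1-\lambda)h(y_2)\bigr) \\
&= P_g\bigl(\lambda X_1+(1-\lambda)X_2,\;\lambda h(y_1)+(1-\lambda)h(y_2)\bigr) \\
&\succeq \lambda P_g(X_1,h(y_1))+(1-\lambda)P_g(X_2,h(y_2)),
\end{align*}
which is exactly joint $\mathbb{H}^n_+$-concavity of $(X,y)\mapsto P_g(X,h(y))$.

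There is no real obstacle here; the proof is essentially a clean composition argument. The only small care needed is to verify that the intermediate argument $\lambda h(y_1)+(1-\lambda)h(y_2)$ lies in $\mathbb{H}^n_{++}$ so that $P_g$ is defined on it, which is immediate from $h$ taking values in $\mathbb{H}^n_{++}$, and to note explicitly that operator monotonicity is what allows us to turn the $\mathbb{H}^n_+$-inequality $h(y)\succeq\lambda h(y_1)+(1-\lambda)h(y_2)$ into a corresponding inequality after applying $P_g(X,\wc)$.
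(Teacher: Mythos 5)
Your proof is correct and follows essentially the same chain of reasoning as the paper's: use $\mathbb{H}^n_+$-concavity of $h$, then Loewner-monotonicity of $Y\mapsto P_g(X,Y)$ (from operator monotonicity of $g$), then joint concavity of $P_g$. The only cosmetic difference is that you spell out the congruence argument for monotonicity of $P_g$ in its second slot and the well-definedness of the intermediate argument, both of which the paper leaves implicit.
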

\begin{proof}
    See Appendix~\ref{appdx:proof-perspective-composition}.
\end{proof}

We are now ready to state a corollary of Theorem~\ref{thm:nc-function} which deals with these noncommutative perspectives, and which we will use to prove Lemma~\ref{lem:renyi-compatibility-ii}.

\begin{cor}\label{thm:compatibility-perspective}
    For the functions $g:(0,\infty)\rightarrow\mathbb{R}$ and $h:(0,\infty)\rightarrow(0,\infty)$, consider the composed noncommutative perspective function
    \begin{equation*}
        P_{g,h}(X,Y,Z) \coloneqq P_g(X, P_h(Y, Z)),
    \end{equation*}
    defined on the domain $\mathbb{H}^n_{++}\times\mathbb{H}^n_{++}\times\mathbb{H}^n_{++}$. If either 
    \begin{itemize}
        \item $g$ is operator concave, and $h$ is affine, or
        \item $g$ is operator concave and operator monotone, and $h$ is operator concave,
    \end{itemize}
     then $P_{g,h}$ is jointly $\mathbb{H}^n_+$-concave and $(\mathbb{H}^n_{+}, 1)$-compatible with the domain $\mathbb{H}^n_{+}\times\mathbb{H}^n_{+}\times\mathbb{H}^n_{+}$ for all positive integers $n$.
\end{cor}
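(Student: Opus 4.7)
The plan is to apply Theorem~\ref{thm:nc-function} with $d=3$ to the family of functions $(P_{g,h})_n : (\mathbb{H}^n_{++})^3 \to \mathbb{H}^n$. To do this, I need to verify three things about $P_{g,h}$: that it respects direct sums, that it respects simultaneous unitaries, and that it is jointly $\mathbb{H}^n_+$-concave for every positive integer $n$.

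The first two properties are routine. The noncommutative perspective $P_g$ is built from the scalar function $g$ applied to matrices via the spectral calculus, together with matrix square roots, inverses, and products. Each of these operations respects both direct sums and simultaneous unitary conjugation. Consequently, $P_h(Y_1\oplus Y_2, Z_1\oplus Z_2) = P_h(Y_1,Z_1)\oplus P_h(Y_2,Z_2)$ and $P_h(UYU^*,UZU^*) = UP_h(Y,Z)U^*$, and the same identities hold for the outer perspective $P_g$, so they pass through the composition $P_{g,h}$. These identities should be stated and the verification delegated to the structure of the spectral calculus.

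The main work is establishing joint $\mathbb{H}^n_+$-concavity of $P_{g,h}$ in each of the two cases. In the first case, I would observe that when $h$ is affine, say $h(t)=at+b$ with $a,b \in \mathbb{R}$, the inner perspective simplifies to the jointly linear map $P_h(Y,Z) = aZ + bY$. Therefore $P_{g,h}(X,Y,Z) = P_g(X,aZ+bY)$ is the composition of $P_g$ with an affine map in $(Y,Z)$; since $P_g$ is jointly $\mathbb{H}^n_+$-concave when $g$ is operator concave by~\cite[Theorem 2.2]{ebadian2011perspectives}, and an affine precomposition preserves concavity, we obtain joint concavity. In the second case, I would first note that the hypothesis on $h$ ensures that $P_h$ itself is jointly $\mathbb{H}^n_+$-concave on $\mathbb{H}^n_{++}\times\mathbb{H}^n_{++}$ (again by the result of Ebadian et al.). Then I would apply Lemma~\ref{lem:perspective-composition} with the inner function being $(Y,Z)\mapsto P_h(Y,Z)$, which is a concave map into $\mathbb{H}^n_{++}$; the hypotheses that $g$ is operator concave and operator monotone are exactly what the lemma requires to conclude that $(X,Y,Z)\mapsto P_g(X, P_h(Y,Z))$ is jointly $\mathbb{H}^n_+$-concave.

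The anticipated obstacle is case two: one needs the operator monotonicity assumption on $g$ precisely to be able to concatenate an outer perspective with a concave (but not affine) inner perspective while preserving joint concavity, and this is exactly what Lemma~\ref{lem:perspective-composition} delivers. Once joint concavity is in hand in each case, the hypotheses of Theorem~\ref{thm:nc-function} are all satisfied, and that theorem immediately yields $(\mathbb{H}^n_+,1)$-compatibility of $P_{g,h}$ with the domain $\mathbb{H}^n_{+}\times\mathbb{H}^n_{+}\times\mathbb{H}^n_{+}$, completing the proof.
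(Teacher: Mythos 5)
Your proposal is correct and follows essentially the same route as the paper: verify that $P_{g,h}$ respects direct sums and simultaneous unitaries, establish joint $\mathbb{H}^n_+$-concavity in the two cases (affine $h$ via linearity of $P_h$, operator concave $h$ via Lemma~\ref{lem:perspective-composition}), and then invoke Theorem~\ref{thm:nc-function}. The only difference is cosmetic (you spell out slightly more detail on why the perspective respects direct sums and unitaries, and you correctly write $P_h(Y,Z)=aZ+bY$ where the paper has a harmless transposition).
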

\begin{proof}
    First, it is relatively straightforward to confirm that the noncommutative perspective, and therefore the composed noncommutative perspective $P_{g,h}$, respects direct sums and simultaneous unitaries as required in Theorem~\ref{thm:nc-function}. Therefore, by appealing to this theorem, it suffices to show that $P_{g,h}$ is jointly $\mathbb{H}^n_+$-concave for all positive integers $n$ under the given assumptions. If $h$ is affine, i.e., of the form $h(x)=ax+b$, then $P_{g,h}(X,Y,Z) = P_g(X, aY+bZ)$ and joint $\mathbb{H}^n_+$-concavity of $P_{g,h}$ follows immediately from joint $\mathbb{H}^n_+$-concavity of $P_g$ when $g$ is operator concave. When $h$ is operator concave and $g$ is both operator concave and operator monotone, $P_h$ is jointly $\mathbb{H}^n_{+}$-concave, and therefore joint concavity of $P_{g,h}$ follows directly from Lemma~\ref{lem:perspective-composition}. This concludes the proof.
\end{proof}

\begin{proof}[Proof of Lemma~\ref{lem:renyi-compatibility-ii}]
     Using Corollary~\ref{thm:compatibility-perspective} with $g(x)=-x^{1-\alpha}$ and $h(x)=x^{\frac{1}{\alpha}}$, we conclude that $P_{g,h}$ is $(\mathbb{H}^{n^2}_+, 1)$-compatible with the domain $\mathbb{H}^{n^2}_+\times\mathbb{H}^{n^2}_+\times\mathbb{H}^{n^2}_+$. By considering the positive linear map $\Phi:\mathbb{H}^{n^2}\rightarrow\mathbb{R}$ which satisfies $\Phi(X\otimes\conj{Y}) = \tr[XY]$ for all $X,Y\in\mathbb{H}^{n}$, a straightforward computation shows that
     \begin{align*}
         \Phi(P_{g,h}(X \otimes \mathbb{I}, Y\otimes\mathbb{I}, \mathbb{I} \otimes \conj{Y})) &= \Phi( P_{\hat{g}}(P_h(Y\otimes\mathbb{I}, \mathbb{I} \otimes \conj{Y}), X \otimes \mathbb{I}) ) \\
         &= \Phi\!\left(P_{\hat{g}}\!\left( 
         (Y^\frac{1}{2}\otimes\mathbb{I}) (Y^{-1}\otimes\conj{Y})^\frac{1}{\alpha} (Y^\frac{1}{2}\otimes\mathbb{I})
         , X\otimes\mathbb{I}\right)\right)  \\
         &= \Phi\!\left(P_{\hat{g}}\!\left(Y^{\frac{\alpha-1}{\alpha}}\otimes\conj{Y}^{\frac{1}{\alpha}}, X\otimes\mathbb{I}\right)\right)  \\
         &= -\Phi\!\left(  \left(Y^{\frac{\alpha-1}{2\alpha}}\otimes\conj{Y}^{\frac{1}{2\alpha}}\right) \left( Y^\frac{1-\alpha}{2\alpha} X Y^\frac{1-\alpha}{2\alpha} \otimes \conj{Y}^{-\frac{1}{\alpha}} \right)^\alpha \left(Y^{\frac{\alpha-1}{2\alpha}}\otimes\conj{Y}^{\frac{1}{2\alpha}}\right) \right)  \\
         &= -\Phi\!\left(Y^\frac{\alpha-1}{2\alpha} \left(Y^\frac{1-\alpha}{2\alpha} X Y^\frac{1-\alpha}{2\alpha} \right)^\alpha Y^\frac{\alpha-1}{2\alpha} \otimes \bar{Y}^\frac{1-\alpha}{\alpha} \right)\\
         &= -\Psi_\alpha(X, Y),
     \end{align*}
     where we have used the identity $P_g(X,Y)=P_{\hat{g}}(Y,X)$ and $\hat{g}(x)=xg(1/x)=-x^\alpha$ in the first line. Therefore, using Lemma~\ref{lem:compatibility-composition} gives the desired compatibility result.
\end{proof}

We can also recover all of the compatibility results in~\cite{fawzi2023optimal} by using Lemma~\ref{lem:compatibility-composition-ii} together with the identity $P_g(X, Y) = P_{g,h}(X, Y, Y)$ when $h(x)=x$. 

\begin{cor}\label{cor:prior-results}
    For an operator concave function $g:(0,\infty)\rightarrow\mathbb{R}$, the perspective function $P_g$ is $(\mathbb{H}^n_+, 1)$-compatible with the domain $\mathbb{H}^n_+\times\mathbb{H}^n_+$ for all positive integers $n$.
\end{cor}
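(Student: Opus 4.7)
The plan is to derive this as a direct consequence of Corollary~\ref{thm:compatibility-perspective} (the case $h$ affine) combined with the affine substitution rule of Lemma~\ref{lem:compatibility-composition-ii}, exactly as the paragraph preceding the statement suggests. The idea is that the two-argument perspective $P_g(X,Y)$ can be seen as a restriction of the three-argument composed perspective $P_{g,h}$ to the ``diagonal'' in its second and third arguments, when $h$ is taken to be the identity.

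First, I would set $h(x)=x$, so that $h$ is affine, and observe from the definition of the noncommutative perspective that $P_h(Y,Z) = Y^{1/2}(Y^{-1/2}ZY^{-1/2})Y^{1/2} = Z$. Consequently $P_{g,h}(X,Y,Z) = P_g(X,P_h(Y,Z)) = P_g(X,Z)$, so in particular
\begin{equation*}
    P_g(X,Y) = P_{g,h}(X,Y,Y),
\end{equation*}
for all $X,Y \in \mathbb{H}^n_{++}$. Since $g$ is operator concave and $h(x)=x$ is affine, the first bullet of Corollary~\ref{thm:compatibility-perspective} applies and gives that $P_{g,h}$ is $(\mathbb{H}^n_+,1)$-compatible with $\mathbb{H}^n_+ \times \mathbb{H}^n_+ \times \mathbb{H}^n_+$.

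Next, I would introduce the (linear, hence affine) map $\mathcal{B}:\mathbb{H}^n\times\mathbb{H}^n \to \mathbb{H}^n\times\mathbb{H}^n\times\mathbb{H}^n$ defined by $\mathcal{B}(X,Y) = (X,Y,Y)$. Clearly $\image \mathcal{B}$ meets the open domain $\mathbb{H}^n_{++}\times\mathbb{H}^n_{++}\times\mathbb{H}^n_{++}$ of $P_{g,h}$ (e.g.\ at $(I,I,I)$), and the preimage $\mathcal{B}^{-1}(\mathbb{H}^n_+\times\mathbb{H}^n_+\times\mathbb{H}^n_+)$ equals $\mathbb{H}^n_+\times\mathbb{H}^n_+$. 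Applying Lemma~\ref{lem:compatibility-composition-ii} then yields that $P_{g,h}\circ\mathcal{B}$ is $(\mathbb{H}^n_+,1)$-compatible with $\mathbb{H}^n_+\times\mathbb{H}^n_+$, and since $P_{g,h}\circ\mathcal{B} = P_g$ by construction, this is the claim.

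There is no substantive obstacle here: all the real work has been done upstream in Theorem~\ref{thm:nc-function} and its Corollary~\ref{thm:compatibility-perspective}, and the only things to verify at this stage are the trivial identity $P_h(Y,Z)=Z$ for $h=\mathrm{id}$, and the routine check that the diagonal embedding $\mathcal{B}$ satisfies the mild hypotheses of Lemma~\ref{lem:compatibility-composition-ii}. The proof should therefore be essentially a one-line reduction.
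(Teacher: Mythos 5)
Your proof is correct and takes essentially the same route as the paper, which states the result as a one-line consequence of Corollary~\ref{thm:compatibility-perspective} (with $h=\mathrm{id}$) combined with the affine restriction rule of Lemma~\ref{lem:compatibility-composition-ii} applied to the diagonal map $\mathcal{B}(X,Y)=(X,Y,Y)$. The only detail worth spelling out (which you do) is the identity $P_h(Y,Z)=Z$ for $h=\mathrm{id}$ and the observation that $\mathcal{B}^{-1}(\mathbb{H}^n_+\times\mathbb{H}^n_+\times\mathbb{H}^n_+)=\mathbb{H}^n_+\times\mathbb{H}^n_+$; otherwise it is indeed the one-line reduction the paper intends.
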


\subsection{Proof of Lemma~\ref{lem:renyi-compatibility-iii}}\label{sec:lem:renyi-compatibility-iii}

The proof of $(\mathbb{R}_{+}, 1)$-compatibility of $\bm{D}_\alpha$ with respect to $\mathbb{R}_+\times\mathbb{H}^n_+\times\mathbb{H}^n_+$ for $\alpha\in[\frac{1}{2}, 1)$ is a simple extension of the proof for Lemma~\ref{lem:renyi-compatibility-i}. By appealing to Theorem~\ref{thm:compatibility-operator-concave}, it suffices to show that
\begin{align*}
    G(t)\coloneqq \bm{D}_\alpha\divz{u+ts}{X+tH}{Y+tV} = \frac{u+ts}{\alpha-1} \log \!\left( \Psi_\alpha \!\left(\frac{X+tH}{u+ts}, \frac{Y+tV}{u+ts} \right) \right),
\end{align*}
is operator convex on $(-1, 1)$ for all $u\in\mathbb{R}_{++}$, $X,Y\in\mathbb{H}^n_{++}$, $s\in\mathbb{R}$, and $H,V\in\mathbb{H}^n$ which satisfy $u\pm s \geq 0$, $X\pm H \succeq 0$, and $Y \pm V \succeq 0$. To prove this, we first use the fact that $\Psi_\alpha$ is homogeneous of degree one to show that
\begin{align*}
    G(t) 
    &= \frac{u+ts}{\alpha-1} \log \!\left( \frac{F(t)}{u+ts}  \right)
    = \frac{1}{\alpha-1}P_{\log}(u+ts, F(t)),
\end{align*}
where $F$ is defined in~\eqref{eqn:direction-of-psi}, and which we proved was operator concave on $(-1, 1)$ in Section~\ref{sec:lem:renyi-compatibility-i}. 
By applying Lemma~\ref{lem:perspective-composition} where $g(x)=\log(x)$ and $h(x)=F(x)$, it follows that the extension of $G$ to matrix arguments, i.e.,
\begin{equation*}
    G(T) = \frac{1}{\alpha-1}P_{\log}(u\mathbb{I}+sT, F(T))
\end{equation*}
where $T\in\mathbb{H}^m$ has eigenvalues in $(-1, 1)$, is $\mathbb{H}^m_+$-convex for any positive integer $m$. Therefore, we conclude that $G$ is operator convex on $(-1, 1)$. Appealing to Theorem~\ref{thm:compatibility-operator-concave} gives the desired compatibility result.

\begin{rem}
    By setting $u=1$ and $s=0$, a straightforward corollary of this result is that the (non-homogenized) negative sandwiched R\'enyi entropy $D_\alpha$ is also $(\mathbb{R}_+, 1)$-compatible with respect to the domain $\mathbb{H}^n_+\times\mathbb{H}^n_+$. This can be used to construct a $(1+2n)$-self-concordant barrier for $\closure\epigraph D_\alpha$ by appealing to Lemma~\ref{lem:compatibility-to-barrier}. However, as $D_\alpha$ is not homogeneous of degree one, this epigraph is not a cone, and optimality of the barrier parameter cannot be easily established using~\cite[Corollary 3.13]{fawzi2023optimal}. 
\end{rem}

\begin{rem}
    By combining the ideas discussed in this section, together with the ideas presented in the proofs of Corollary~\ref{thm:compatibility-perspective}, Corollary~\ref{cor:prior-results}, and~\cite[Corollary 1.8]{fawzi2023optimal}, we can also show that the (perspective) of the R\'enyi entropy $\hat{D}_\alpha$ is $(\mathbb{R}_+, 1)$-compatible with respect to its domain for $\alpha\in[0, 1)$. Therefore, suitable barrier functions for its epigraph can also be constructed by appealing to Lemma~\ref{lem:compatibility-to-barrier}.
\end{rem}

\section{Implementation}\label{sec:implementation}

We implement the barrier function for the epigraphs and hypographs proposed in Theorems~\ref{thm:renyi-barrier} and~\ref{thm:direct-renyi-barrier} in the primal-dual interior-point solver QICS~\cite{he2024qics}. These cones are accessible using the optimization modeling software PICOS~\cite{picos}. In Section~\ref{sec:derivatives}, we give some details about the derivative oracles required for these barriers. In Section~\ref{sec:experiments}, we present some numerical experiments to evaluate the performance of using the proposed barrier to solve optimization problems involving the sandwiched R\'enyi entropy. 

\subsection{Derivatives}\label{sec:derivatives}

We provide brief derivations of expressions for the first and second derivatives of the trace function 
\begin{equation}\label{eqn:general-trace-function}
    \Psi_{g,h}(X,Y) \coloneqq \tr\!\left[g\!\left(h(Y)^{\frac{1}{2}}Xh(Y)^{\frac{1}{2}}\right)\right],
\end{equation}
defined on $\mathbb{H}^n_{++}\times\mathbb{H}^n_{++}$, for twice continuously differentiable functions $g:(0,\infty)\rightarrow\mathbb{R}$ and $h:(0,\infty)\rightarrow\mathbb{R}$. Note that $\Psi_\alpha(X,Y)=\Psi_{g,h}(X,Y)$ when $g(x)=x^\alpha$ and $h(x)=x^{\frac{1-\alpha}{\alpha}}$. The derivatives of the barrier functions proposed in Theorem~\ref{thm:renyi-barrier} are then a straightforward consequence of these results. We refer the reader to~\cite[Section 4.1.1]{he2024qics} for a discussion of how to obtain derivatives for the barriers from the expressions presented in this section. 

First, by using the fact that $AB$ and $BA$ share the same nonzero eigenvalues for any $A\in\mathbb{C}^{n\times m}$ and $B\in\mathbb{C}^{m\times n}$ \cite[Theorem 1.32]{higham2008functions}, we can show that
\begin{equation*}
    \Psi_{g,h}(X,Y) = \tr\!\left[g\!\left(X^{\frac{1}{2}}h(Y)X^{\frac{1}{2}}\right)\right],
\end{equation*}
for all $X,Y\in\mathbb{H}^n_{++}$. Using this expression together with the original expression~\eqref{eqn:general-trace-function}, the first derivatives of $\Psi_{g,h}$ are relatively easy to derive using the chain rule~\cite[Theorem 3.4]{higham2008functions} combined with the derivative for trace functions, $\mathsf{D}(\text{tr}\circ g)(X)[H]=\inp{H}{g'(X)}$, see, e.g., \cite[Theorem 3.23]{hiai2014introduction}. Doing this gives
\begin{align*}
    \mathsf{D}_{X}\Psi_{g,h}(X, Y)[H] &= \inp*{h(Y)^{\frac{1}{2}}Hh(Y)^{\frac{1}{2}}}{\ g'\!\left(h(Y)^{\frac{1}{2}}Xh(Y)^{\frac{1}{2}} \right)} \\
    \mathsf{D}_{Y}\Psi_{g,h}(X, Y)[V] &= \inp*{X^{\frac{1}{2}}\mathsf{D}h(Y)[V]X^{\frac{1}{2}}}{\ g'\!\left(X^{\frac{1}{2}}h(Y)X^{\frac{1}{2}}\right)},
\end{align*}
where $X,Y\in\mathbb{H}^n_{++}$ and $H,V\in\mathbb{H}^n$. From these expressions, the second derivatives can be found by using the chain rule together with the product rule~\cite[Theorem 3.3]{higham2008functions} to obtain
\begin{align*}
    \mathsf{D}^2_{XX}\Psi_{g,h}(X, Y)[H_1, H_2] &= \inp*{h(Y)^{\frac{1}{2}}H_1h(Y)^{\frac{1}{2}}}{\ \mathsf{D}(g')\!\left(h(Y)^{\frac{1}{2}}Xh(Y)^{\frac{1}{2}}\right)\!\left[h(Y)^{\frac{1}{2}}H_2h(Y)^{\frac{1}{2}}\right]} \\
    \mathsf{D}^2_{YY}\Psi_{g,h}(X, Y)[V_1, V_2] &= \inp*{X^{\frac{1}{2}}\mathsf{D}h(Y)[V_1]X^{\frac{1}{2}}}{\ \mathsf{D}(g')\!\left(X^{\frac{1}{2}}h(Y)X^{\frac{1}{2}}\right)\!\left[X^{\frac{1}{2}}\mathsf{D}h(Y)[V_2]X^{\frac{1}{2}}\right]} \\
    & \hphantom{{}={}} +\inp*{X^{\frac{1}{2}}\mathsf{D}^2h(Y)[V_1, V_2]X^{\frac{1}{2}}}{\ g'\!\left(X^{\frac{1}{2}}h(Y)X^{\frac{1}{2}}\right)}.
\end{align*}
where $X,Y\in\mathbb{H}^n_{++}$ and $H_1,H_2,V_1,V_2\in\mathbb{H}^n$. Although the derivative $\mathsf{D}^2_{XY}\Psi_{g,h}(X, Y)[H, V]$ is not as straightforward to derive, we show how to do this using a similar method to the proof of~\cite[Lemma 4.2]{he2024qics}. 
\begin{lem}
    Consider the trace function $\Psi_{g,h}$ defined in~\eqref{eqn:general-trace-function}, where $g:(0,\infty)\rightarrow\mathbb{R}$ is twice continuously differentiable, and $h:(0,\infty)\rightarrow\mathbb{R}$ is continuously differentiable. If $\tilde{g}(x)=xg'(x)$, $X,Y\in\mathbb{H}^n_{++}$, and $H,V\in\mathbb{H}^n$, then
    \begin{equation*}
        \mathsf{D}^2_{XY}\Psi_{g,h}(X, Y)[H, V] = \inp*{\mathsf{D}h(Y)[V]}{\ h(Y)^{-\frac{1}{2}}\mathsf{D}\tilde{g}\!\left(h(Y)^{\frac{1}{2}}Xh(Y)^{\frac{1}{2}}\right)\!\left[h(Y)^{\frac{1}{2}}Hh(Y)^{\frac{1}{2}}\right] \!h(Y)^{-\frac{1}{2}}}.
    \end{equation*}
\end{lem}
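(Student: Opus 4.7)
The plan is to compute $\mathsf{D}^2_{YX}\Psi_{g,h}(X,Y)[V,H]$ from the first-derivative formula
\begin{equation*}
    \mathsf{D}_X \Psi_{g,h}(X,Y)[H] = \tr[KHK \cdot g'(M)]
\end{equation*}
derived above, and then appeal to symmetry of the Hessian. Here I abbreviate $K = h(Y)^{1/2}$ and $M = KXK$, and use the shorthands $\dot K = \mathsf{D}(h^{1/2})(Y)[V]$, $\dot h = \mathsf{D}h(Y)[V]$, and $N = \mathsf{D}(g')(M)[KHK]$. The crucial identity linking $\dot K$ to $\dot h$, obtained by differentiating $h(Y) = K(Y)^2$ in direction $V$, is $\dot h = \dot K K + K \dot K$. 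Differentiating the first-derivative formula in $Y$ along $V$ using the product and chain rules, then applying self-adjointness of the Fr\'echet derivative $\mathsf{D}(g')(M)$ (a standard Daleckii--Krein fact) together with cyclic trace to collect $\dot K$ on the left, and using $XK = K^{-1}M$ and $KX = MK^{-1}$, the result takes the form
\begin{equation*}
    \mathsf{D}^2_{YX}\Psi_{g,h}(X,Y)[V,H] = \tr\!\left[\dot K\bigl(HKg'(M) + g'(M)KH + K^{-1}MN + NMK^{-1}\bigr)\right].
\end{equation*}

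The key trick, analogous to the proof of Lemma~4.2 in~\cite{he2024qics} that the statement references, is to recognize the parenthesized matrix as $K^{-1}A + AK^{-1}$, where $A \coloneqq \mathsf{D}\tilde{g}(M)[KHK]$. Applying the product rule to the two equivalent factorizations $\tilde{g}(M) = Mg'(M) = g'(M)M$ yields the twin identities
\begin{equation*}
    \mathsf{D}\tilde{g}(M)[B] = Bg'(M) + M\mathsf{D}(g')(M)[B] = \mathsf{D}(g')(M)[B]\,M + g'(M)B.
\end{equation*}
Expanding $K^{-1}A$ with the first form produces $HKg'(M) + K^{-1}MN$, while expanding $AK^{-1}$ with the second form produces $NMK^{-1} + g'(M)KH$; summing these reproduces exactly the bracketed matrix above. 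The virtue of using different forms on the two sides is that the residual commutator $[M,N]$ terms that would otherwise obstruct the simplification cancel automatically.

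With this identification, one more use of cyclic trace combined with $\dot h = \dot K K + K \dot K$ gives
\begin{equation*}
    \tr[\dot K(K^{-1}A + AK^{-1})] = \tr[\dot h \cdot K^{-1}AK^{-1}] = \inp{\dot h}{K^{-1}AK^{-1}},
\end{equation*}
where Hermiticity of $\dot h$ and $A$ justifies the last equality. Invoking the symmetry $\mathsf{D}^2_{XY}\Psi_{g,h}[H,V] = \mathsf{D}^2_{YX}\Psi_{g,h}[V,H]$ finishes the proof. The main obstacle is that $\dot K = \mathsf{D}(h^{1/2})(Y)[V]$ has no convenient closed form---its Daleckii--Krein expression involves divided differences of $h^{1/2}$ that do not appear in the lemma---so one must show that all instances of $\dot K$ conspire, via the twin forms of $\mathsf{D}\tilde{g}$, to combine into the single explicit quantity $\mathsf{D}h(Y)[V]$.
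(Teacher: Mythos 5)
Your proof is correct, and it takes a genuinely different route from the paper's. The paper differentiates first in $Y$, obtains $\mathsf{D}_Y\Psi_{g,h}[V] = \inp{X^{1/2}\mathsf{D}h(Y)[V]X^{1/2}}{g'(X^{1/2}h(Y)X^{1/2})}$, then specializes to $g'(x)=x^p$ so it can use the similarity identity $X^{1/2}(X^{1/2}h(Y)X^{1/2})^p X^{1/2} = h(Y)^{-1/2}(h(Y)^{1/2}Xh(Y)^{1/2})^{p+1}h(Y)^{-1/2}$ to isolate $\tilde g$ before taking the easy $X$-derivative; it then extends from monomials to polynomials by linearity and to general $C^2$ functions $g$ by a density/continuity argument in the style of~\cite[Theorem V.3.3]{bhatia2013matrix}. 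You instead differentiate in the opposite order, which forces you to face $\dot K = \mathsf{D}(h^{1/2})(Y)[V]$ head on; the key observation — that the two factorizations $\tilde g(M)=Mg'(M)=g'(M)M$ yield twin product-rule expansions of $\mathsf{D}\tilde g(M)[\cdot]$ whose left- and right-conjugations by $K^{-1}$ exactly reproduce the four terms appearing after the chain rule — is what allows all the $\dot K$'s to reassemble into $\dot h = \dot K K + K\dot K$. This buys you a direct algebraic computation valid for all $C^2$ functions $g$ at once, eliminating the polynomial-approximation and continuity step, at the cost of a somewhat more intricate bookkeeping (self-adjointness of $\mathsf{D}(g')(M)$, symmetry of the Hessian, and the square-root derivative identity). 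Both arguments are sound; the paper's is shorter because it chose the derivative order that keeps $h(Y)$ rather than $h(Y)^{1/2}$ in play, while yours is self-contained and avoids any limiting argument.
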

\begin{proof}
    First, assume that $g'$ is a function of the form $g'(x)=x^p$ for any positive integer $p$. Then 
    \begin{align*}
        \mathsf{D}_{Y}\Psi_{g,h}(X, Y)[V] &= \inp*{\mathsf{D}h(Y)[V]}{\ X^{\frac{1}{2}}\!\left(X^{\frac{1}{2}}h(Y)X^{\frac{1}{2}}\right)^pX^{\frac{1}{2}}} \\
        &= \inp*{\mathsf{D}h(Y)[V]}{\ h(Y)^{-\frac{1}{2}}\!\left(h(Y)^{\frac{1}{2}}Xh(Y)^{\frac{1}{2}}\right)^{p+1}h(Y)^{-\frac{1}{2}}} \\
        &= \inp*{\mathsf{D}h(Y)[V]}{\ h(Y)^{-\frac{1}{2}}\tilde{g}\!\left(h(Y)^{\frac{1}{2}}Xh(Y)^{\frac{1}{2}}\right)\!h(Y)^{-\frac{1}{2}}}.
    \end{align*}
    Taking the derivative of this expression in the variable $X$ and the direction $H$ gives the desired result when $g'(x)=x^p$ for a positive integer $p$. By linearity, this is also true when $g$ is any polynomial function. This result is then extended to all twice continuously differentiable functions $g$ by using a similar continuity argument as~\cite[Theorem V.3.3]{bhatia2013matrix}. 
\end{proof}
The third derivatives of $\Psi_{g,h}$ can similarly be derived by using the chain and product rules on the second derivative expressions, which we omit the details of for berevity. Concrete expressions for the derivatives of the spectral functions $\mathsf{D}^k (g')(X)[H]$ and $\mathsf{D}^k h(Y)[V]$ can be found in, e.g.,~\cite[Theorem 3.33]{hiai2014introduction}.

\subsection{Numerical experiments}\label{sec:experiments}

Here, we provide numerical experiments using our implementation of our barrier functions in QICS to solve optimization problems involving the sandwiched R\'enyi entropy function. Note that in all experiments, we reformulate minimization of the sandwiched R\'enyi entropy as optimizing the sandwiched quasi-relative entropy $\Psi_\alpha$ by using~\eqref{eqn:reformulate}. In the following, we define the partial traces $\tr_1:\mathbb{H}^{n^2}\rightarrow\mathbb{H}^n$ and $\tr_2:\mathbb{H}^{n^2}\rightarrow\mathbb{H}^n$ to be the unique linear maps which satisfy
\begin{equation*}
    \tr_1(X \otimes Y)=\tr[X]Y, \qquad \quad \tr_2(X \otimes Y)=\tr[Y]X.
\end{equation*}
for all $X,Y\in\mathbb{H}^{n}$. Note that the partial traces are the adjoint operators of the linear maps $X\mapsto \mathbb{I}\otimes X$ and $X\mapsto X\otimes\mathbb{I}$, respectively, where we recall that $\otimes$ denotes the Kronecker product.

\paragraph{Sandwiched R\'enyi mutual information}

First, we consider computing the sandwiched R\'enyi mutual information, which is defined as the optimal value of
\begin{equation}\label{eqn:mutual-information}
    \min_{X\in\mathbb{H}^n}  \quad D_\alpha\divx{A}{\tr_2(A) \otimes X} \quad \subjto \quad \tr[X] = 1,\quad X \succeq0,
\end{equation}
for some matrix $A\in\mathbb{H}^{n^2}_{+}$. When $\alpha\rightarrow1$, it is known that the minimum is attained at $X_*=\tr_1(A)$. However, in general a closed-form expression for the sandwiched R\'enyi mutual information is not known. Instead, \cite[Lemma 5]{hayashi2016correlation} shows that the minimum is attained at an $X_*\in\mathbb{H}^n_+$ which is the unique fixed-point of the following map
\begin{equation}\label{eqn:mutual-information-residual}
    X_* = \tr_1(Z(X_*)) / \tr[Z(X_*)],
\end{equation}
where
\begin{equation*}
    Z(X_*) = \left( (\tr_2(A) \otimes X_*)^{\frac{1-\alpha}{2\alpha}} A (\tr_2(A) \otimes X_*)^{\frac{1-\alpha}{2\alpha}}\right)^\alpha.
\end{equation*}
We confirm this by solving~\eqref{eqn:mutual-information} using our implementation of the barrier functions in Theorem~\ref{thm:renyi-barrier} in QICS. The results from running these experiments are summarized in Table~\ref{tab:mutual-inf}.


\begin{table}
\caption{Results of computing the Sandwiched R\'enyi mutual information~\eqref{eqn:mutual-information} for randomly generated unit trace $n^2\times n^2$ Hermitian matrices $A$. The reported residual is the Frobenius norm of the residual matrix of~\eqref{eqn:mutual-information-residual}. Also shown is the code snippet used to solve~\eqref{eqn:mutual-information} using QICS and PICOS. Note that the constraint $X\succeq0$ is implied by the domain of \texttt{picos.sandquasientr}.}
\label{tab:mutual-inf}
\vspace{-2ex}\hrule\vspace{-7ex}
\begin{minipage}[c][1.01\width]{0.6\textwidth}%
\begin{center}
\begin{lstlisting}[language=python, numbers=none]
import numpy, picos

# Generate problem data
n, alpha = 4, 0.75
A = numpy.random.rand(n*n, 2*n*n).view(complex)
A = A @ A.conj().T

# Define problem
P = picos.Problem()
X = picos.HermitianVariable("X", n)
tr2_A = picos.partial_trace(A, 1, (n, n))

obj = picos.sandquasientr(A, tr2_A @ X, alpha)
P.set_objective("max", obj)
P.add_constraint(picos.trace(X) == 1)

# Solve problem
P.solve(solver="qics")
\end{lstlisting}
\par\end{center}%
\end{minipage}\centering{}%
\begin{minipage}[c][1.01\width]{0.4\textwidth}%
\begin{center}
    \small
    \begin{tabular}{@{}cccc@{}}
    \toprule
    $n$ & $\alpha$ & Time (s) & Residual \\ \midrule
    $4$ & $0.75$ & $0.09$ & $4.3\times10^{-8}$ \\
    $4$ & $1.50$ & $0.12$ & $3.5\times10^{-9}$ \\
    $8$ & $0.75$ & $3.40$ & $1.7\times10^{-7}$ \\
    $8$ & $1.50$ & $5.68$ & $8.3\times10^{-10}$ \\
    $16$  & $0.75$ & $693.50$ & $1.8\times10^{-8}$ \\
    $16$  & $1.50$ & $1557.06$ & $2.3\times10^{-10}$ \\ \bottomrule
    \end{tabular}
\par\end{center}%
\end{minipage}
\vspace{-9ex}\hrule
\end{table}

\paragraph{Quantum rate-distortion}

We next present some experiments to approximate the solution of quantum relative entropy programs by using the fact that the (normalized) sandwiched R\'enyi entropy converges to the quantum relative entropy as $\alpha\rightarrow1$. In particular, we estimate the optimal rate-distortion tradeoff for the maximally entangled state, which for a given constant $0\leq\delta\leq1$ is given as the solution to
\begin{equation}\label{eqn:qrd}
    \min_{X\in\mathbb{H}^{n^2}} \quad D_\alpha\divx{X}{\mathbb{I}\otimes\tr_1(X)} \quad \subjto \quad \tr_2(X) = \frac{1}{n}\mathbb{I}, \quad \delta \geq \inp{X}{\Delta}, \quad X\succeq0,
\end{equation}
as $\alpha\rightarrow1$, where
\begin{equation*}
    \Delta = \mathbb{I} - \frac{1}{n}\sum_{i=1}^n \sum_{j=1}^n e_ie_j^\top \otimes e_ie_j^\top,
\end{equation*}
and $\{e_i\in\mathbb{R}^n\}_{i=1}^n$ represent the standard basis. In~\cite[Theorem 4.16]{he2024efficient}, it was shown that the optimal value of this problem when $\alpha\rightarrow1$ is
\begin{equation}\label{eqn:qrd-closed-form}
    \log(n) + (1-\delta)\log(1-\delta) + \delta\log\!\left(\frac{\delta}{n^2-1}\right),
\end{equation}
whenever $0\leq\delta\leq1-1/n^2$, and is zero otherwise. We verify this by solving~\eqref{eqn:qrd} for values of $\alpha$ which converge to $1$. These problems are solved using our implementation of the barrier functions in Theorem~\ref{thm:renyi-barrier} in QICS, and we summarize these results in Table~\ref{tab:qrd}. Note that the sandwiched R\'enyi entropy is monotone in $\alpha$ for fixed matrix arguments~\cite[Theorem 7]{muller2013quantum}, which is reflected in these results.


\begin{table}
\caption{Results of estimating the optimal rate-distortion tradeoff of the maximally mixed state for $n=4$ and $\delta=0.25$ by solving~\eqref{eqn:qrd} for values of $\alpha$ converging to $1$. The optimal value reported for $\alpha=1$, shown in bold, is obtained from the closed form expression~\eqref{eqn:qrd-closed-form}. Also shown is the code snippet used to solve~\eqref{eqn:qrd} using QICS and PICOS. Note that the constraint $X\succeq0$ is implied by the domain of \texttt{picos.sandquasientr}.}
\label{tab:qrd}
\vspace{-2ex}\hrule\vspace{-10ex}
\begin{minipage}[c][1.022\width]{0.7\textwidth}%
\begin{center}
\begin{lstlisting}[language=python, numbers=none]
import numpy, picos

# Generate problem data
n, alpha, delta = 4, 0.99, 0.25
Delta = numpy.eye(n*n)
Delta[::n+1, ::n+1] -= 1/n

# Define problem
P = picos.Problem()
X = picos.SymmetricVariable("X", n*n)
tr1_X = picos.partial_trace(X, 0, (n, n))
tr2_X = picos.partial_trace(X, 1, (n, n))

obj = picos.sandquasientr(X, picos.I(n) @ tr1_X, alpha)
P.set_objective("max", obj)
P.add_constraint(tr2_X == picos.I(n) / n)
P.add_constraint(( X | Delta ) <= delta)

# Solve problem
P.solve(solver="qics")
\end{lstlisting}
\par\end{center}%
\end{minipage}\centering{}%
\begin{minipage}[c][1.022\width]{0.3\textwidth}%
\begin{center}
    \small
    \begin{tabular}{@{}lc@{}}
    \toprule
    $\alpha$ & Optimal value \\ \midrule
    $0.9$ & $0.0027555$ \\
    $0.99$ & $0.1332757$ \\
    $0.999$ & $0.1455750$ \\
    $0.9999$ & $0.1467922$ \\
    \bftab1 & \bftab0.1469467 \\
    $1.0001$ & $0.1470813$ \\
    $1.001$ & $0.1481874$ \\
    $1.01$ & $0.1604453$ \\
    $1.1$ & $0.2740472$ \\ \bottomrule
    \end{tabular}
\par\end{center}%
\end{minipage}
\vspace{-12ex}\hrule
\end{table}

\section{Conclusion}\label{sec:conclusion}

In this paper, we have established a close relationship between compatibility and operator concavity of a function, and used this result to prove the self-concordance of natural logarithmic barrier functions of the hypograph of $\Psi_\alpha$ for $\alpha\in[\frac{1}{2}, 1]$ and the epigraph of $\Psi_\alpha$ for $\alpha\in[1, 2]$. This allows us to solve optimization problems involving the sandwiched R\'enyi entropy using interior-point methods.

Whether there exists a self-concordant barrier for the epigraph of $\Psi_\alpha$ for $\alpha\in(2, \infty)$ with optimal barrier parameter $1+2n$ remains an open question. More generally, it is not clear how to construct an efficiently computable self-concordant barrier for this cone. For the scalar case, we can prove the following compatibility result.
\begin{prop}\label{prop:scalar-compatibility}
    For $\alpha\in[2,\infty)$, the function $(x,y)\mapsto-x^{\alpha}y^{1-\alpha}$ is $(\mathbb{R}_+, (2\alpha-1)/3)$-compatible with the domain $\mathbb{R}_+\times\mathbb{R}_+$.
\end{prop}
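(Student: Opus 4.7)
The plan is to verify Definition~\ref{def:compatibility} directly. Since $\mathcal{K}=\mathbb{R}_+$ and the function is scalar, the assertion is that for every $(x,y)\in\mathbb{R}_{++}^2$ and every direction $(h,v)\in\mathbb{R}^2$ with $|h|\leq x$ and $|v|\leq y$, the function $\phi(t) := -(x+th)^\alpha(y+tv)^{1-\alpha}$ satisfies $\phi'''(0) \leq -(2\alpha-1)\,\phi''(0)$, together with the $\mathbb{R}_+$-concavity of $-x^\alpha y^{1-\alpha}$ itself. The latter is the classical convexity of the weighted geometric mean for $\alpha\geq 1$, so we focus entirely on the third-derivative estimate.

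For this, write $\phi=-\psi$ with $\psi>0$ and introduce the logarithmic derivative $\rho(t) := \log\psi(t) = \alpha\log(x+th)+(1-\alpha)\log(y+tv)$. The standard identities $\psi''/\psi=\rho''+(\rho')^2$ and $\psi'''/\psi=\rho'''+3\rho'\rho''+(\rho')^3$, evaluated at $t=0$, give closed-form expressions in terms of the normalized quantities $a=h/x\in[-1,1]$, $b=v/y\in[-1,1]$, together with $p=\alpha$ and $q=1-\alpha$. Writing $r:=pa+qb$ and $S_k:=pa^k+qb^k$, dividing the target inequality by $\psi(0)>0$ reduces it to
\begin{equation*}
\bigl[\,2S_3 - 3rS_2 + r^3\,\bigr] + (2\alpha-1)\bigl[\,r^2 - S_2\,\bigr] \geq 0.
\end{equation*}

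The heart of the proof is a pair of algebraic factorizations exploiting $p+q=1$. A direct expansion gives $r^2 - S_2 = -pq(a-b)^2$, and, after grouping the cubic terms (for instance via the identity $pa^3+3qa^2b+3pab^2+qb^3 = r(a-b)^2 + 2ab(a+b)$), one obtains $2S_3 - 3rS_2 + r^3 = -pq(a-b)^2\bigl[(p-2)a+(q-2)b\bigr]$. Substituting these two relations and using $2\alpha-1 = p-q$ collapses the left-hand side of the target inequality to
\begin{equation*}
-pq(a-b)^2\bigl[(p-2)a+(q-2)b+(p-q)\bigr].
\end{equation*}
Since $-pq = \alpha(\alpha-1) > 0$ for $\alpha\geq 2$, it suffices to check that the bracketed scalar is non-negative on $[-1,1]^2$. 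As $p-2\geq 0$ and $q-2\leq -3$, the bracket is minimized at $(a,b)=(-1,1)$, where it evaluates to $-(p-2)+(q-2)+(p-q) = 0$, which closes the proof.

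The main obstacle is the two-variable polynomial factorization of $2S_3-3rS_2+r^3$; it is a routine but mildly delicate calculation, made cleaner by exploiting the single relation $p+q=1$. The fact that the bracket vanishes exactly at the worst case $(a,b)=(-1,1)$ shows that the compatibility constant $(2\alpha-1)/3$ obtained by this argument is tight.
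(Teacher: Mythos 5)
Your proof is correct and follows essentially the same path as the paper's: both arrive at the factorizations $\mathsf{D}^2 f \propto (\hat{x}-\hat{y})^2$ and $\mathsf{D}^3 f \propto (\hat{x}-\hat{y})^2\bigl[(\alpha-2)\hat{x}-(\alpha+1)\hat{y}\bigr]$ and then reduce to bounding that linear bracket over $[-1,1]^2$. The only difference is organizational: you derive the second and third derivatives via the logarithmic derivative $\rho=\log\psi$ and the identities $\psi''/\psi=\rho''+(\rho')^2$, $\psi'''/\psi=\rho'''+3\rho'\rho''+(\rho')^3$, which tidies up the algebra compared with the paper's direct computation of all partial derivatives; the cubic factorization you prove is exactly the expression the paper writes down directly.
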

\begin{proof}
    See Appendix~\ref{appdx:proof-scalar-compatibilty}. 
\end{proof}
This compatibility parameter is tight in the sense that it cannot be improved, and so serves as a lower bound for the compatibility parameter for $\Psi_\alpha$ for $\alpha\in[2,\infty)$. Based on numerical experiments, we believe this lower bound is tight when $\Psi_\alpha$ is defined on matrices of any dimension, and therefore make the following conjecture.
\begin{conjecture}
    For $\alpha\in[2,\infty)$, the function $-\Psi_\alpha$ is $(\mathbb{H}^n_+, (2\alpha-1)/3)$-compatible with the domain $\mathbb{H}^n_+\times\mathbb{H}^n_+$ for any positive integer $n$.
\end{conjecture}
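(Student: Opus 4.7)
The plan is to leverage the structural properties exploited in Theorem~\ref{thm:nc-function} together with the scalar bound of Proposition~\ref{prop:scalar-compatibility}. Since $-\Psi_\alpha$ respects direct sums and simultaneous unitaries, the block-diagonalization argument in the proof of Theorem~\ref{thm:nc-function} reduces the extension of the line restriction $F(t) = -\Psi_\alpha(X+tH, Y+tV)$ to a matrix argument $T\in\mathbb{H}^m$ (eigenvalues in $(-1,1)$) to a direct sum of scalar line restrictions, contracted by a positive linear map $\Xi$. Proposition~\ref{prop:scalar-compatibility} already furnishes the scalar line bound $F'''(0)\leq -(2\alpha-1)F''(0)$. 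My strategy is to show that this scalar bound lifts through the block decomposition and contraction to the matrix case.

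The reasoning in Theorem~\ref{thm:nc-function} does this passage for concavity: it gives operator concavity of the line restriction, which via Theorem~\ref{thm:compatibility-operator-concave} produces compatibility with parameter $1$. To obtain parameter $\beta = (2\alpha-1)/3 > 1$ for $\alpha>2$, a quantitative version of Theorem~\ref{thm:compatibility-operator-concave} is needed --- concretely, a sufficient condition of the form: if $F(t) = \inp{z}{f(x+th)}$ admits a representation
\begin{equation*}
F(t) = F(0) + F'(0)t + \tfrac{1}{2} F''(0)\int \xi_s(t)\,d\mu(s)
\end{equation*}
with smooth kernels $\xi_s$ satisfying $\xi_s'''(0)\leq -3\beta\,\xi_s''(0)$, then $f$ is $(\mathcal{K},\beta)$-compatible. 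The proof of this generalization is essentially a verbatim copy of the proof of Theorem~\ref{thm:compatibility-operator-concave} (dominated convergence to swap integral and derivative). The nontrivial task is to produce such an integral representation for the matrix line restriction of $-\Psi_\alpha$ when $\alpha\in[2,\infty)$, and this is where the scalar tightness in Proposition~\ref{prop:scalar-compatibility} gives confidence that the correct constant is $(2\alpha-1)/3$.

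The main obstacle is precisely the existence of an appropriate integral representation in the range $\alpha\in[2,\infty)$. Lemma~\ref{lem:operator-convex} (Loewner-type representation) applies to operator convex functions, of which the line restriction $t\mapsto -\Psi_\alpha(X+tH, Y+tV)$ need not be a member when $\alpha>2$. A plausible route is to replace ``operator convex'' by a weaker notion of \emph{matrix convex of finite order} and to prove a Hiai-style complex-analytic representation whose measure is concentrated on a smaller interval than $[-1,1]$, reflecting the stronger scalar rate. Alternatively, one might work directly with the derivative expressions in Section~\ref{sec:derivatives}, compute $F''(0)$ and $F'''(0)$ in terms of spectral decompositions of $X$ and $Y$, and verify the inequality $F'''(0) + (2\alpha-1)F''(0)\geq 0$ by a careful divided-difference estimate; the numerical evidence cited by the authors suggests this holds with equality in a limiting regime, but turning it into an algebraic proof --- especially in the non-commuting case $[X,Y]\neq 0$ --- is the crux. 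Either route must confront the failure of $x\mapsto x^\alpha$ to be operator convex for $\alpha>2$, which is precisely the obstruction that keeps this case out of reach of the existing machinery.
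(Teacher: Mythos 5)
This statement is, in the paper, a \emph{conjecture}, not a theorem: the authors explicitly state that it is based on numerical evidence and leave it unproven. Your proposal does not prove it either, and --- to your credit --- you do not claim it does; you sketch a strategy and then correctly identify where it breaks down. The obstruction you name is essentially the one that keeps the conjecture open. Concretely, the paper's compatibility engine (Theorem~\ref{thm:compatibility-operator-concave}) is hard-wired to produce $\beta = 1$: it feeds the integral representation of Lemma~\ref{lem:operator-convex}, which holds only when the line restriction is operator convex/concave, into the pointwise kernel bound $\xi_s'''(0) = 6s \geq -6 = -3\,\xi_s''(0)$ for $s \in [-1,1]$, and the constant $3$ is fixed by the support of the measure. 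For $\alpha > 2$ the line restrictions of $-\Psi_\alpha$ are no longer operator concave (equivalently, the building blocks $g(x) = -x^{1-\alpha}$ and $h(x) = x^{1/\alpha}$ used in Corollary~\ref{thm:compatibility-perspective} fail the operator concavity/monotonicity hypotheses once $1-\alpha < -1$), so both Theorem~\ref{thm:nc-function} and Theorem~\ref{thm:compatibility-operator-concave} are unavailable. Your suggested fix --- a quantitative variant with kernels supported on a smaller interval, or a finite-order matrix-convexity representation --- is a sensible guess, but no such representation is established in the paper or (as far as the authors indicate) in the literature, and the scalar case (Proposition~\ref{prop:scalar-compatibility}) only supplies a lower bound on the achievable $\beta$, not a mechanism for proving the matrix case.

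One small imprecision worth flagging: you describe the proof of Theorem~\ref{thm:nc-function} as reducing the matrix line restriction to ``a direct sum of scalar line restrictions, contracted by a positive linear map.'' The block structure does diagonalize $T$ so that each diagonal block of $F(\Lambda)$ is $F(\lambda_j)$, but the actual load-bearing step is that $T \mapsto F(T)$ factors as $\Xi \circ f_{nm} \circ (\text{affine})$ with $f_{nm}$ jointly $\mathbb{H}^{nm}_+$-concave; what fails for $\alpha > 2$ is not the block decomposition itself (which is purely algebraic) but the availability of an operator-concavity input to push through $\Xi$. Since the paper gives no proof of this conjecture, there is nothing further to compare your approach against.
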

If this were true, then we could still use Lemma~\ref{lem:compatibility-to-barrier} to construct self-concordant barriers for the epigraph of $\Psi_\alpha$ for $\alpha\in[2,\infty)$, albeit with a barrier parameter which scales as $O(\alpha^3)$. As an alternative, it may be possible to find a lifted representation (see, e.g.,~\cite{fawzi2022lifting}) for the epigraph of $\Psi_\alpha$ which admits a barrier function with barrier parameter that does not grow with $\alpha$. 


Another possible extension of our results is to consider the general class of trace functions
\begin{equation*}
    \Psi_{p,q,s}(X, Y) = \tr\!\left[ \left(Y^\frac{q}{2} X^p Y^\frac{q}{2} \right)^s \right],
\end{equation*}
for $p\geq q$ and $s>0$. This generalizes the sandwiched R\'enyi entropy when $p=1$, $q=1-\alpha$, and $s=\alpha$, and the R\'enyi entropy when $p=\alpha$, $q=1-\alpha$, and $s=1$. The convexity properties of $\Psi_{p,q,s}$ for the full range of $p$, $q$ and $s$ are summarized in~\cite{zhang2020wigner}, which we repeat below for convenience. 
\begin{enumerate}[label=(\roman*)]
    \item $\Psi_{p,q,s}$ is jointly concave for $0\leq q\leq p \leq 1$ and $0<s\leq\frac{1}{p+q}$.
    \item $\Psi_{p,q,s}$ is jointly convex for $-1\leq q \leq p \leq 0$ and $s>0$.
    \item $\Psi_{p,q,s}$ is jointly convex for $-1\leq q \leq0$, $1\leq p < 2$, $(p,q)\neq (1, -1)$, and $s\geq\frac{1}{p+q}$.
\end{enumerate}
An interesting question is, to which range of parameters $p$, $q$ and $s$ do our compatibility results and techniques extend, thus allowing us to give self-concordant barriers for this larger class of functions. For scenario (i) and the subset of scenario (ii) where $-1\leq q \leq p \leq 0$ and $0<s\leq-\frac{1}{p+q}$, we can prove that $\Psi_{p,q,s}$ is $(\mathbb{R}_+, 1)$-compatible with respect to the domain $\mathbb{H}^n_+\times\mathbb{H}^n_+$ by using virtually the same proof as for Lemma~\ref{lem:compatibility-composition-i} presented in Section~\ref{sec:lem:renyi-compatibility-i}. The only modification required is to use the insights from the proof of~\cite[Theorem 2.1]{hiai2016concavity} (see, also,~\cite[Section 3]{carlen2018inequalities}) in place of Lemma~\ref{lem:f-is-pick}. 

It remains an open question how to construct self-concordant barriers for the remaining scenarios, i.e., the subset of (ii) where $-1\leq q \leq p \leq 0$ and $s>-\frac{1}{p+q}$, and scenario (iii). We note that scenario (iii) contains the $\alpha\in[1,\infty)$ range as a special case, and therefore a subset of this range may be amenable to a generalization of our approach in Section~\ref{sec:lem:renyi-compatibility-ii}.

\section*{Acknowledgments}

H. Fawzi was partially funded by UK Research and Innovation (UKRI) under the UK government’s Horizon Europe funding guarantee EP/X032051/1.

\appendix

\section{Auxiliary proofs}\label{appdx:proofs}

\subsection{Proof of optimality of barrier parameter in Theorem~\ref{thm:direct-renyi-barrier}}\label{appdx:barrier-parameter}

Here, we prove that any barrier for $\closure \epigraph \bm{D}_\alpha$ must have parameter at least $2+2n$ by using the same technique as the proof of~\cite[Corollary 3.13]{fawzi2023optimal}. To do this, we first introduce the following result.

\begin{lem}[{\cite[Proposition 3.11]{fawzi2023optimal}}]\label{lem:optimal-parameter}
    Let $n$ be a positive integer, and let $h:\mathbb{R}^n_{++}\rightarrow\mathbb{R}$ be convex and positively homogeneous of degree one. Then any self-concordant barrier for $\closure\epigraph h$ has barrier parameter at least $1+n$.
\end{lem}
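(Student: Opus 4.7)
The plan is to prove Lemma~\ref{lem:optimal-parameter} by induction on $n$.

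For the base case $n=1$, any $h:\mathbb{R}_{++}\to\mathbb{R}$ that is convex and positively homogeneous of degree one must be linear, i.e., $h(x)=ax$ for some $a\in\mathbb{R}$. Consequently, the change of variables $(t,x)\mapsto(t-ax,x)$ gives a linear isomorphism between $\closure\epigraph h$ and $\mathbb{R}^2_+$, and the minimal self-concordant barrier parameter of $\mathbb{R}^2_+$ is well known to be $2=1+1$, achieved by $(t,x)\mapsto-\log(t)-\log(x)$.

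For the inductive step, assume the lemma holds for $n-1$, and let $h:\mathbb{R}^n_{++}\to\mathbb{R}$ be convex and positively homogeneous of degree one. Set $K:=\closure\epigraph h$. I would identify the codimension-one facet
\begin{equation*}
    F := K \cap \{(t,x)\in\mathbb{R}^{n+1} : x_n = 0\},
\end{equation*}
which, after the natural identification with $\mathbb{R}^{n}$, is the closure of the epigraph of the boundary restriction $\tilde h:\mathbb{R}^{n-1}_{++}\to\mathbb{R}$ of $h$ (obtained as $x_n\to 0^+$, well-defined by lower semicontinuity of the closed convex hull of $h$). The function $\tilde h$ is itself convex and positively homogeneous of degree one, so by the inductive hypothesis $\nu(F)\geq n$.

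The concluding step is to invoke the following facet reduction principle: for a proper convex cone $K\subseteq\mathbb{R}^d$ admitting a codimension-one facet $F$, any self-concordant barrier for $K$ has parameter at least $\nu(F)+1$. Together with the inductive lower bound, this yields $\nu(K)\geq n+1$, completing the induction.

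The main obstacle is justifying the facet reduction principle, since naively restricting a $\nu$-self-concordant barrier for $K$ to the hyperplane supporting $F$ only yields a $\nu$-self-concordant barrier for $F$, giving $\nu(K)\geq\nu(F)=n$ and missing the extra $+1$. To extract the additional unit, I would exploit the logarithmic homogeneity: for any $\nu$-LHSCB $F_K$ of $K$ and any $x\in\interior K$, the gradient $-\nabla F_K(x)$ lies in $K^*$ and pairs with $x$ to give $\nu$. The codimension-one facet $F$ corresponds to an exposed extreme ray of $K^*$ that is linearly independent of the extreme rays of $K^*$ witnessed by the inductive argument on $F$; summing contributions then delivers the sought $\nu(K)\geq \nu(F)+1$. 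An alternative route is to analyze directly how any barrier for $K$ must blow up as $-\log(\mathrm{dist}(\cdot,F))$ along the outward normal to $F$, producing the extra unit via one-dimensional self-concordance along this normal.
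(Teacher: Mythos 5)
Your proposed induction hinges on an unproved ``facet reduction principle'': that for a proper cone $K$ with a codimension-one facet $F$, every self-concordant barrier for $K$ has parameter at least $\nu(F)+1$. You flag this yourself as the main obstacle, and the two routes you sketch do not close the gap. First, the ``naive restriction'' you describe is not actually available: restricting a barrier for $K$ to the affine hull of $F$ does not give a finite function on $\relinterior F$, since a barrier blows up on $\boundary K \supseteq F$, so even the weaker inequality $\nu(K)\geq\nu(F)$ requires a limiting argument over slices parallel to $\mathrm{aff}(F)$ and is not immediate. Second, ``summing contributions'' from extreme rays of $K^*$ has no quantitative meaning that would produce an additive $+1$, and ``the barrier blows up like $-\log(\mathrm{dist}(\cdot,F))$ along the outward normal'' establishes at most that the one-dimensional restriction has parameter $\geq 1$, which does not combine additively with $\nu(F)$ without a genuine compatibility/decomposition lemma. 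None of this is a standard result one can simply invoke.

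There is also a more elementary flaw in the inductive step itself: $F=K\cap\{x_n=0\}$ need not have codimension one, and $\tilde h$ need not be real-valued. For instance $h(x)=x_1^2/x_n$ is convex and positively homogeneous of degree one on $\mathbb{R}^n_{++}$, but its lower-semicontinuous extension is $+\infty$ on $\{x_n=0,\,x_1>0\}$, so $K\cap\{x_n=0\}$ collapses to a lower-dimensional set and the inductive hypothesis cannot be applied to it. You would at minimum need to argue that some coordinate hyperplane works, which is not obvious.

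The cited result ([fawzi2023optimal, Prop.\ 3.11]) is obtained more directly, without induction, by applying a Nesterov--Nemirovskii-type lower bound on the barrier parameter in terms of recession directions. Concretely, for $x=(h(\mathbf{1})+n+1,\mathbf{1})\in\interior K$, the $n+1$ directions $p_0=(-1,0)$ and $p_i=(-h(e_i),-e_i)$ for $i=1,\dots,n$ satisfy $-p_i\in K$ (so $x-\beta p_i\in K$ for all $\beta>0$, using subadditivity of $h$), while $x+\sum_i p_i$ has vanishing $\mathbb{R}^n$-component and hence lies outside $\interior K$; a single application of the Nesterov--Nemirovskii criterion then gives $\nu\geq n+1$ in one step. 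This avoids both the facet-existence issue and the unproven reduction principle. I would recommend rebuilding the argument along those lines rather than trying to salvage the induction.
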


Now consider the function $h:\mathbb{R}_{++}\times\mathbb{R}^n_{++}\times\mathbb{R}^n_{++}$ defined by 
\begin{equation*}
    h(u, x, y)=\bm{D}_\alpha\divz{u}{\diag(x)}{\diag(y)},
\end{equation*}
where $\diag(x)$ is the diagonal matrix with diagonal elements given by $x$. As $\bm{D}_\alpha$ is convex and positively homogeneous of degree one, so is $h$, and therefore Lemma~\ref{lem:optimal-parameter} tells us that any self-concordant barrier for $\closure\epigraph h$ has barrier parameter at least $2+2n$. This implies that any self-concordant barrier $(t,u,X,Y)\mapsto F(t,u,X,Y)$ for $\closure\epigraph h$ must also have parameter at least $2+2n$, as otherwise $(t,u,x,y)\mapsto F(t,u,\diag(x),\diag(y))$ would be a self-concordant barrier for $\closure\epigraph h$ with parameter less than $2+2n$, which completes the proof.

\subsection{Proof of Lemma~\ref{lem:transpose-is-concave}}\label{appdx:proof-transpose-is-concave}

As $f$ is operator monotone on $(\gamma, \infty)$, Loewner's theorem (see Lemma~\ref{lem:loewner}) tells us that $f$ has the integral representation
\begin{equation*}
    f(x) = \alpha + \beta x + \int_{-\infty}^\gamma \frac{1}{s-x}-\frac{s}{s^2+1}\,d\mu(s), \quad\forall x\in(\gamma,\infty),
\end{equation*}
for some constants $\alpha\in\mathbb{R}$ and $\beta\geq0$ and a positive Borel measure $\mu$ on $(-\infty, \gamma]$. The transpose of $f$ is therefore equal to
\begin{equation*}
    \hat{f}(x)=xf(1/x) = \alpha x + \beta + \int_{-\infty}^\gamma \frac{x^2}{sx-1}-\frac{sx}{s^2+1}\,d\mu(s), \quad\forall x\in(0, 1/\gamma).
\end{equation*}
It suffices to show that the integrand is operator concave on the domain $(0, 1/\gamma)$, for each $s\in(-\infty, \gamma]$. When $s=0$, the integrand is a negative quadratic, which is operator concave. When $s\neq0$, we have
\begin{equation*}
    \frac{x^2}{sx-1} = \frac{x}{s} + \frac{1}{s^2} + \frac{1}{s^2}\frac{1}{sx-1}.
\end{equation*}
Since $x\mapsto x^{-1}$ is operator convex on $(0, \infty)$, it follows that $h(x)\coloneqq(sx-1)^{-1}$ is operator concave on $(-\infty, 1/s)$ if $s\geq0$, and is operator concave on $(1/s, \infty)$ if $s\leq0$. Therefore, $h$ is operator concave on $(0,1/\gamma)$ for each $s\in(-\infty,0)\cup(0,\gamma]$, and therefore the entire integrand is operator concave on $(0,1/\gamma)$ for each $s\in(-\infty,\gamma]$, from which the desired result follows.

\subsection{Proof of Lemma~\ref{cor:localization}}\label{appdx:proof-localization}

We will use the following lemma to prove the desired result.

\begin{lem}[{\cite[Theorem 9.2]{simon2019loewner}}]\label{lem:hansen-tomiyama}
    Let $f$ be a $C^{2n}$ function on the interval $(a, b)$, and let $n$ be a fixed integer. Then $f$ is $\mathbb{H}^n_+$-convex if and only if the Hansen-Tomiyama matrix $H_n(x; f)$ with entries
    \begin{equation*}
        H_n(x; f)_{ij} = \frac{f^{(i+j)}(x)}{(i+j)!}, \quad 1 \leq i,j \leq n,
    \end{equation*}
    is positive semidefinite for all $x\in(a, b)$.
\end{lem}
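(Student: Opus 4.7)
My plan is to reduce $\mathbb{H}^n_+$-convexity to pointwise positivity of the second directional derivative, translate that derivative into a divided-difference expression via the Dale\v{c}ki\v{\i}-Kre\u{\i}n formula, and then extract the Hankel matrix condition by Taylor-expanding the divided differences around $x$.

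First I would invoke the matrix second-derivative test: a $C^2$ function $f$ is $\mathbb{H}^n_+$-convex on $(a,b)$ if and only if $\mathsf{D}^2 f(X)[H,H]\succeq 0$ for every $X\in\mathbb{H}^n$ with spectrum in $(a,b)$ and every $H\in\mathbb{H}^n$. This follows by applying scalar convexity to $t\mapsto\langle\xi,f(X+tH)\xi\rangle$ for arbitrary $\xi\in\mathbb{C}^n$, together with the observation that $X+tH$ remains in the relevant set for small $t$. The Dale\v{c}ki\v{\i}-Kre\u{\i}n formula then gives, in the eigenbasis $X=\sum_\alpha\lambda_\alpha e_\alpha e_\alpha^*$,
\begin{equation*}
    (\mathsf{D}^2 f(X)[H,H])_{\alpha\beta} = 2\sum_\gamma f[\lambda_\alpha,\lambda_\gamma,\lambda_\beta]\,H_{\alpha\gamma}H_{\gamma\beta},
\end{equation*}
where $f[\cdot,\cdot,\cdot]$ is the second-order divided difference.

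For the forward direction (matrix convex $\Rightarrow$ Hankel PSD), I would specialize to $X = x\mathbb{I}_n + \epsilon D$ with $D=\mathrm{diag}(d_1,\ldots,d_n)$ a diagonal matrix of distinct entries, and expand the divided differences via the Hermite-Genocchi identity
\begin{equation*}
    f[x+\epsilon a,x+\epsilon b,x+\epsilon c] = \sum_{m\geq 0}\frac{f^{(m+2)}(x)}{(m+2)!}\,\epsilon^m\,h_m(a,b,c),
\end{equation*}
where $h_m$ is the complete homogeneous symmetric polynomial of degree $m$. Substituting into the formula above, the PSD condition $\mathsf{D}^2 f(X)[H,H]\succeq 0$ becomes a polynomial inequality in $\epsilon$. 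Collecting powers $\epsilon^0,\epsilon^1,\ldots,\epsilon^{2n-2}$ and allowing both the spectral gaps $d_i$ and the off-diagonal entries $H_{\alpha\gamma}$ to vary, a Vandermonde-type linear-independence argument shows that the simultaneous positivity of all these coefficient expressions is equivalent to the Hankel matrix $[f^{(i+j)}(x)/(i+j)!]_{i,j=1}^n$ being PSD.

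For the converse I would reverse this calculation: given $H_n(x;f)\succeq 0$ at every $x$, reassemble the Taylor-expanded divided differences into a manifestly positive quadratic form in the $H_{\alpha\gamma}$, in the spirit of the integral representation from Lemma~\ref{lem:operator-convex} applied to the Taylor polynomial of $f$ of degree $2n$. The main obstacle is the combinatorial bookkeeping in the forward step: verifying that the variations in $\{d_i\}$ and $\{H_{\alpha\gamma}\}$ decouple the constraints coming from different orders of $\epsilon^m$ and isolate each entry of the Hankel matrix without cross-contamination, and checking that the $C^{2n}$ regularity hypothesis suffices to legally truncate the Taylor series at degree $2n-2$.
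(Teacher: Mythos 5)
This lemma is cited from \cite[Theorem 9.2]{simon2019loewner} (originally Hansen and Tomiyama) and is not proved in the paper --- it is used as a black box in the proof of Lemma~\ref{cor:localization} --- so there is no in-paper argument to compare against. Your scaffolding is the right one and matches the Hansen--Tomiyama approach in spirit: the equivalence of $\mathbb{H}^n_+$-convexity with $\mathsf{D}^2f(X)[H,H]\succeq 0$, the Dale\v{c}ki\v{\i}--Kre\u{\i}n formula in the eigenbasis, and Taylor expansion of the second divided differences around a collapsing spectrum via the Hermite--Genocchi identity are all correct ingredients.

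The gap is in the extraction step, and it is not merely the ``combinatorial bookkeeping'' you flag at the end --- the step as described would fail. Nonnegativity of the scalar $Q(\epsilon)=\xi^*\mathsf{D}^2 f(x\mathbb{I}+\epsilon D)[H,H]\xi$ for all small $\epsilon$ controls only the sign of the leading nonzero term of its expansion; it does \emph{not} force each coefficient $a_m$ of $\epsilon^m$ to be nonnegative, which is what ``collecting powers $\epsilon^0,\ldots,\epsilon^{2n-2}$'' and then demanding ``simultaneous positivity of all these coefficient expressions'' implicitly requires. Concretely, the $\epsilon^0$ coefficient is $f''(x)\,\xi^*H^2\xi$, which is strictly positive for generic $H$, and once that is the case the higher coefficients carry no sign constraint at all. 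There is also a mismatch in the target: the Hankel condition is a \emph{single} quadratic form in which $f''(x),\ldots,f^{(2n)}(x)$ all enter at once, not a family of independent inequalities that could be matched one-for-one to orders of $\epsilon$, so the proposed Vandermonde decoupling cannot ``isolate each entry of the Hankel matrix'' --- that is not what needs to be isolated. What the actual argument does instead is let the direction $H$ and/or the probe vector $\xi$ depend on $\epsilon$ so that the entire Hankel quadratic form $\sum_{i,j}\bar c_i c_j f^{(i+j)}(x)/(i+j)!$ is obtained as a single $\epsilon\to0$ limit of manifestly nonnegative Kraus/Dale\v{c}ki\v{\i}--Kre\u{\i}n forms; positivity is then inherited by the limit. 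Without such a limiting construction replacing the coefficient-by-coefficient extraction, the forward direction is not established. The converse sketch is likewise missing a nontrivial ingredient: passing from the pointwise Hankel inequality back to global order-$n$ matrix convexity requires a localization argument (Hansen's lemma) or an explicit reconstruction of order-$n$ convex functions, not simply truncating the Taylor series and invoking the order-$\infty$ integral representation of Lemma~\ref{lem:operator-convex}, which characterizes operator convexity of all orders rather than order exactly $n$.
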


Using Lemma~\ref{lem:hansen-tomiyama}, we know the Hansen-Tomiyama matrix $H_n(x; f)$ is positive semidefinite for all $x\in(a,b)\cup(b,c)$. As $f$ is $C^{2n}$, all entries of the Hansen-Tomiyama matrix are continuous on $(a, c)$, and therefore $H_n(b; f)$ must also be positive semidefinite. Appealing to Lemma~\ref{lem:hansen-tomiyama} again in the other direction gives the desired $\mathbb{H}^n_+$-convexity result.

\subsection{Proof of Corollary~\ref{lem:perspective-composition}}\label{appdx:proof-perspective-composition}

Let $X_1,X_2\in\mathbb{H}^n_{++}$ and $y_1,y_2\in\domain h$. For some $\lambda\in[0,1]$, let $X=\lambda X_1+(1-\lambda)X_2$ and $y=\lambda y_1+(1-\lambda)y_2$. As $h$ is $\mathbb{H}^n_+$-concave, we have by definition that
\begin{equation*}
    h(y) \succeq \lambda h(y_1) + (1-\lambda) h(y_2).
\end{equation*}
Using operator monotonicity of $g$, we can show that
\begin{align*}
    P_g(X, h(y)) &= X^{\frac{1}{2}} g\!\left(X^{-\frac{1}{2}} h(y) X^{-\frac{1}{2}}\right) \!X^{\frac{1}{2}} \\
    &\succeq X^{\frac{1}{2}} g\!\left(X^{-\frac{1}{2}} (\lambda h(y_1) + (1-\lambda) h(y_2)) X^{-\frac{1}{2}} \right)\! X^{\frac{1}{2}} \\
    &= P_{g}(X, \lambda h(y_1) + (1-\lambda) h(y_2) ).
\end{align*}
Finally, we use joint concavity of $P_g$ to show
\begin{align*}
    P_g(X, h(y)) &\succeq \lambda P_{g}(X_1, h(y_1) ) + (1-\lambda) P_{g}(X_2, h(y_2) ),
\end{align*}
which shows that the desired function is jointly $\mathbb{H}^n_+$-concave, as required.

\subsection{Proof of Proposition~\ref{prop:scalar-compatibility}}\label{appdx:proof-scalar-compatibilty}

First, it is straightforward to compute the second derivatives of $f$ as
\begin{equation*}
    \frac{\partial^2\! f}{\partial x^2} = -\alpha(\alpha-1)x^{\alpha-2}y^{1-\alpha}, \quad \frac{\partial^2\! f}{\partial x\partial y} = \alpha(\alpha-1)x^{\alpha-1}y^{-\alpha}, \quad \frac{\partial^2\! f}{\partial y^2} = -\alpha(\alpha-1)x^\alpha y^{-\alpha-1},
\end{equation*}
and the third derivatives of $f$ as
\begin{gather*}
    \frac{\partial^3\! f}{\partial x^3} = -\alpha(\alpha-1)(\alpha-2)x^{\alpha-3}y^{1-\alpha}, \quad \frac{\partial^3\! f}{\partial x^2\partial y} = \alpha(\alpha-1)^2x^{\alpha-2}y^{-\alpha}, \\
    \frac{\partial^3\! f}{\partial x\partial y^2} = -\alpha^2(\alpha-1)x^{\alpha-1}y^{-\alpha-1}, \quad \frac{\partial^3\! f}{\partial y^3} = \alpha(\alpha-1)(\alpha+1)x^{\alpha}y^{-\alpha-2},
\end{gather*}
Now consider $x,y>0$ and $h,v\in\mathbb{R}$ which satisfy $x\pm h \geq0$ and $y\pm v \geq0$, and let us denote $\hat{x}=h/x$ and $\hat{y}=v/y$, which satisfy $-1\leq\hat{x},\hat{y}\leq1$. The second and third directional derivatives can be expressed as
\begin{align*}
    \mathsf{D}^2f(x,y)[(h,v), (h,v)] &= -\alpha(\alpha-1)x^\alpha y^{1-\alpha}(\hat{x} - \hat{y})^2 \\
    \mathsf{D}^3f(x,y)[(h,v), (h,v), (h,v)] &= -\alpha(\alpha-1)x^\alpha y^{1-\alpha} (\hat{x}-\hat{y})^2((\alpha-2)\hat{x}-(\alpha+1)\hat{y}).
\end{align*}
Then using the fact that $\alpha>2$, we can show that 
\begin{equation*}
    (\alpha-2)\hat{x}-(\alpha+1)\hat{y} \leq 2\alpha-1.
\end{equation*}
Appealing to the definition of compatibility then gives the desired result.

\bibliographystyle{IEEEtran}
\bibliography{refs}

\end{document}